\documentclass[11pt,twoside,leqno]{article}
 \pdfoutput=1
\usepackage[utf8]{inputenc}
\usepackage[T1]{fontenc}
\usepackage[english]{babel}
\usepackage{csquotes}
\usepackage{palatino}
\usepackage[a4paper,margin=1in,marginpar=1in]{geometry}
\usepackage{multirow}

\usepackage[shortlabels]{enumitem}

\usepackage[nottoc,numbib]{tocbibind} %

\newlist{condition}{enumerate}{10}
\setlist[condition]{label*={(\roman*).},ref={(\roman*)}}
\newlist{conditionalt}{enumerate}{10}
\setlist[conditionalt]{label*={(\roman*').},ref={(\roman*')}}

\usepackage[backend=biber,giveninits=true,maxnames=999,minnames=5]{biblatex}
\ExecuteBibliographyOptions{isbn=false,eprint=false}

\newcommand{\fcite}[1]{\citeauthor{#1}~\cite{#1}}
\usepackage{amssymb, amsmath, amsthm, amsfonts}
\usepackage{esint}
\usepackage{mathtools}

\usepackage[pdfpagelabels]{hyperref}
\usepackage{xcolor}
\definecolor{emerlandgreen}{HTML}{2ECC71}
\definecolor{peterriverblue}{HTML}{3498DB}
\definecolor{alizarinred}{HTML}{E74C3C}
\definecolor{contrastgrey}{HTML}{586E75}
\hypersetup{%
colorlinks,
linktocpage,
urlcolor  = peterriverblue,
citecolor = emerlandgreen,
linkcolor = alizarinred}

\usepackage[capitalise,nameinlink,sort]{cleveref}
\AfterEndEnvironment{proof}{\noindent\ignorespaces}
\makeatletter
\def\@endtheorem{\endtrivlist}
\makeatother
\Crefname{equation}{}{}
\Crefname{enumi}{}{}
\Crefname{conditioni}{Condition}{Conditions}
\Crefname{conditionalti}{Condition}{Conditions}
\newtheorem{theorem}{Theorem}[section]
\Crefname{theorem}{Theorem}{Theorems}
\newtheorem{lemma}[theorem]{Lemma}
\Crefname{lemma}{Lemma}{Lemmas}
\newtheorem{proposition}[theorem]{Proposition}
\Crefname{proposition}{Proposition}{Propositions}

\Crefname{corollary}{Corollary}{Corollaries}

\Crefname{conjecture}{Conjecture}{Conjectures}

\Crefname{assumption}{Assumption}{Assumptions}

\theoremstyle{definition}

\Crefname{definition}{Definition}{Definitions}

\Crefname{question}{Question}{Questions}

\theoremstyle{remark}
\newtheorem{remark}{Remark}
\Crefname{remark}{Remark}{Remarks}

\Crefname{example}{Example}{Examples}
\newtheorem*{example*}{Example}
\numberwithin{equation}{section}

\usepackage{calrsfs}
\DeclareMathAlphabet{\pazocal}{OMS}{zplm}{m}{n}

\usepackage{authblk}

\usepackage{xparse}
\DeclareDocumentCommand\tto{O{n} O{\infty} m}{\xrightarrow[{#1}\to{#2}]{#3}}

\newcommand{\CA}{\mathcal{A}}

\newcommand{\CC}{\mathcal{C}}
\newcommand{\CD}{\mathcal{D}}

\newcommand{\CF}{\mathcal{F}}
\newcommand{\CG}{\mathcal{G}}

\newcommand{\CI}{\mathcal{I}}

\newcommand{\CL}{\mathcal{L}}
\newcommand{\CM}{\mathcal{M}}

\newcommand{\lN}{\mathbf{N}}

\newcommand{\lP}{\mathbf{P}}

\newcommand{\lo}{\mathbf{o}}

\newcommand{\gM}{\mathfrak{M}}

\newcommand{\gO}{\mathfrak{O}}

\newcommand{\gW}{\mathfrak{W}}

\newcommand{\gZ}{\mathfrak{Z}}

\newcommand{\dE}{\mathbb{E}}

\newcommand{\dN}{\mathbb{N}}

\newcommand{\dP}{\mathbb{P}}

\newcommand{\dR}{\mathbb{R}}

\newcommand{\dV}{\mathbb{V}}

\newcommand{\dd}{\mathrm{d}}

\DeclareMathOperator{\Var}{{\dV}ar}

\DeclareMathOperator{\e}{e}

\DeclareMathOperator{\dom}{{\CD}om}
\DeclareMathOperator{\cov}{{\CC}ov}

\addbibresource{ccrv.bib}
\DeclareSourcemap{
  \maps[datatype=bibtex]{
    \map{
      \step[fieldsource=doi,final]
      \step[fieldset=url,null]
    }  
  }
}

\author{Ronan Herry\footnote{Permanent contact: \href{mailto:ronan.herry@live.fr}{\texttt{ronan.herry@live.fr}}. ORCid: \href{https://orcid.org/0000-0001-6313-1372}{0000-0001-6313-1372}.}}

\affil{Institut für Angewandte Mathematik, Universität Bonn.}
\title{Stable limit theorems on the Poisson space}

\begin{document}
\maketitle

\begin{abstract}
  We prove limit theorems for functionals of a Poisson point process using the Malliavin calculus on the Poisson space.
  The target distribution is conditionally either a Gaussian vector or a Poisson random variable.
  The convergence is stable and our conditions are expressed in terms of the Malliavin operators.
  For conditionally Gaussian limits, we also obtain quantitative bounds, given for the Wasserstein transport distance in the univariate case; and for another probabilistic variational distance in higher dimension.
  Our work generalizes several limit theorems on the Poisson space, including the seminal works by \fcite{PeccatiSoleTaqquUtzet} for Gaussian approximations; and by \fcite{PeccatiChenStein} for Poisson approximations; as well as the recently established fourth-moment theorem on the Poisson space of \fcite{DoeblerPeccatiFMT}.
  We give an application to stochastic processes.
\end{abstract}
\textbf{Keywords:} Limit theorems; Stable convergence; Malliavin-Stein; Poisson point process.\\
\textbf{MSC Classification:} 60F15; 60G55; 60H05; 60H07.

\section*{Introduction}%
\label{section:intro}

One of the celebrated contributions of \citeauthor{RenyiStable} \cite{RenyiMixing,RenyiStable} is a refinement of the notion of convergence in law, commonly referred to as \emph{stable convergence}.
Stable convergence is tailored for studying conditional limits of sequences of random variables.
Thus, a stable limit is, typically, a \emph{mixture}, that is, in our terminology: a random variable whose law depends on a random parameter; for instance, a centered Gaussian random variable with random variance, or a Poisson random variable with random mean.
In the setting of semi-martingales, one book by \fcite{JacodShiryaev} summarizes archetypal stable convergence results involving such mixtures.
More recently, results by \fcite{NourdinNualartLimitsSkorokhod}; \fcite{HarnettNualartCLTStratanovich}; and \fcite{NourdinNualartPeccatiStableLimits} give sufficient conditions and quantitative bounds for the stable convergence of functionals of an isonormal Gaussian process to a \emph{Gaussian mixture}.
Typically, applications of such results study the limit of a sequence of quadratic functionals of a fractional Brownian motion.
The three references~\cite{NourdinNualartLimitsSkorokhod,HarnettNualartCLTStratanovich,NourdinNualartPeccatiStableLimits} make a pervasive use of the Malliavin calculus to prove such limit theorems.
Earlier works by \fcite{NulartOrtizLatorreCLTMalliavin} and by \fcite{NourdinPeccatiSteinWiener} initiate this approach: they use Malliavin calculus in order to prove central limit theorems for iterated Itô integrals initially obtained by \fcite{NualartPeccatiFourthMoment} with different tools.
These far-reaching contributions form a milestone in the theory of limit theorems and inaugurate an independent field of research, known as the \emph{Malliavin-Stein approach} (see the webpage of \fcite{MalliavinSteinWebpage} for a comprehensive list of contributions on the subject).

The trendsetting work of \fcite{PeccatiSoleTaqquUtzet} extends the Malliavin-Stein approach beyond the scope of Gaussian fields to Poisson point processes.
Despite being a very active field of research, the considered limit distributions are, most of the time, Gaussian~\cite{LachiezeReyPeccatiContractions,LachiezeReyPeccatiMarkedProcess,LastPeccatiSchulteSecondOrderPoincare,ReitznerSchulteCLTUStatistics,PeccatiReitzner,PeccatiZheng,SchulteCLTVoronoi,DoeblerPeccatiFMT,DoeblerVidottoZheng,BourguinPeccatiPortmanteau} or, sometimes, Poisson~\cite{PeccatiChenStein} or Gamma~\cite{PeccatiThaele}; to the best of our knowledge, prior to the present work, mixtures, have not been considered as limit distributions.
The aim of this paper is to tackle this problem, by proving an array of new quantitative and stable limit theorems on the Poisson space, with a target distribution given either by a Gaussian mixture, that is the distribution of a centered Gaussian variable with random covariance; or a Poisson mixture, that is the distribution of a Poisson variable with random mean.
We rely on two standard techniques to obtain our limit theorems: the \emph{characteristic functional} method, to obtain qualitative results; and an interpolation approach, known as \emph{smart path}, for the quantitative results.
In the two cases, we build upon various tools from stochastic analysis for Poisson point processes, such as the Malliavin calculus, integration by parts for Poisson functionals, and a representation of the carré du champ associated to the generator of the Ornstein-Uhlenbeck semi-group on the Poisson space.
Provided mild regularity assumptions on the functional under study, our approach allows us to deal, in \cref{theorem:convergence_stable_quali,theorem:convergence_stable_quantitative}, with any target distribution of the form $SN$, where $S$ is a matrix-valued random variable (measurable with respect to the underlying Poisson point process) and $N$ is a Gaussian vector independent of the underlying Poisson point process.
In the same way, in \cref{theorem:convergence_stable_quali_P}, we can consider any target distribution of the form of a Poisson mixture, whose precise definition is given below.

Let us now give a more detailed sample of the main results.
Throughout the paper, we study the asymptotic behaviour of a sequence $\{F_{n} = f_{n}(\eta)\}$ of square-integrable functionals of a Poisson point process $\eta$.
Here, $\eta$ is a Poisson point process on an arbitrary $\sigma$-finite measured space $(Z, \gZ, \nu)$ (for the moment, we simply recall that $\eta$ is a random integer-valued measure on $Z$ satisfying some strong independence properties and such that $\dE \eta = \nu$).
Moreover, we assume that the $F_{n}$'s are of the form $F_{n} = \delta u_{n}$, where $\delta$ is the Kabanov stochastic integral and $u_{n} = \{ u_{n}(z);\; z \in Z\}$ is a random function on $Z$ (for the moment, one can think of the slightly abusive definition of $\delta$ as the following pathwise stochastic integral $\delta u = \int u(z) (\eta - \nu)(\dd z)$).
As we will see, assuming that $F_{n} = \delta u_{n}$ is not restrictive, as, provided $\dE F_{n} = 0$, this equation always admits infinitely many solutions.
An important object in our study is the \emph{Malliavin derivative} of $F_{n}$ given by $D_{z} F_{n} = f_{n}(\eta + \delta_{z}) - f_{n}(\eta)$.
The crucial tool to establish our results is a duality relation (also referred to as integration by parts) between the operators $D$ and $\delta$: $\dE F \delta u = \dE \nu(u DF)$.
This relation is at the heart of the Malliavin-Stein approach to obtain limit theorems both in a Gaussian \cite[Chapter 5]{NourdinPeccatiBlueBook} and in a Poisson setting \cite{PeccatiSoleTaqquUtzet}.
For instance, we have the following result in our Poisson setting.
\begin{theorem}[{\cite[Theorem 3.1]{PeccatiSoleTaqquUtzet}}]\label{theorem:PSTU}
  Let the previous notation prevails, and assume that:
\begin{equation}\label{condition:sigma_2_deterministe}
  \nu(u_{n} DF_{n}) \tto{\CL^{1}(\dP)} \sigma^{2},
\end{equation}
and
\begin{equation}\label{condition:R3_0}
  \dE \int |u_{n}(z)| {|D_{z} F_{n}|}^{2} \nu(\dd z) \tto{} 0.
\end{equation}
Then\footnote{To be precise, the theorem of \cite{PeccatiSoleTaqquUtzet} chooses one particular solution of $F_{n} = \delta u_{n}$ but we do not enter into too many technical details in this introduction.}, we have that $F_{n} \tto{law} \lN(0,\sigma^{2})$.
\end{theorem}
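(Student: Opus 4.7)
The plan is to apply the Malliavin--Stein scheme on the Poisson space in its classical form. For a test function $h$ in a convergence-determining class (say bounded Lipschitz), let $f = f_h$ solve Stein's equation $\sigma^2 f'(x) - x f(x) = h(x) - \dE h(N)$ for $N \sim \lN(0,\sigma^2)$, which satisfies the usual Stein bounds $\|f'\|_\infty, \|f''\|_\infty < \infty$ in terms of $h$. Evaluating at $F_n$ and taking expectations reduces the problem to estimating
\[
\dE h(F_n) - \dE h(N) = \sigma^2 \dE f'(F_n) - \dE[F_n f(F_n)].
\]

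I would then attack $\dE[F_n f(F_n)]$ via the representation $F_n = \delta u_n$ and the duality $\dE[F \cdot \delta u] = \dE \nu(u\,DF)$, yielding $\dE[F_n f(F_n)] = \dE \nu(u_n \cdot D f(F_n))$. Because $D$ is the add-one difference operator, $D_z f(F_n) = f(F_n + D_z F_n) - f(F_n)$, and a first-order Taylor expansion gives
\[
D_z f(F_n) = f'(F_n) D_z F_n + R_z, \qquad |R_z| \le \tfrac{1}{2}\|f''\|_\infty (D_z F_n)^2.
\]
Substituting back produces the master identity
\[
\dE h(F_n) - \dE h(N) = \dE\!\left[f'(F_n)\bigl(\sigma^2 - \nu(u_n\,DF_n)\bigr)\right] + \CR_n,
\]
with $|\CR_n| \le \tfrac{1}{2}\|f''\|_\infty \dE \int |u_n(z)| (D_z F_n)^2 \nu(\dd z)$.

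The two hypotheses are tailored to kill the two pieces: \cref{condition:R3_0} forces $\CR_n \to 0$, while \cref{condition:sigma_2_deterministe} gives $\CL^1$-convergence of the natural carré-du-champ-like integrand $\nu(u_n\,DF_n)$ (whose expectation equals $\dE F_n^2$ by the duality), which, combined with boundedness of $f'$, sends the main term to $0$; varying $h$ over the test class then yields the announced convergence in law. The main conceptual subtlety, which I expect to be the hardest part, is the discrete chain rule: in contrast to the Wiener--Gaussian setting one does \emph{not} have $D_z f(F_n) = f'(F_n) D_z F_n$ exactly, so the second-order Taylor remainder is genuinely quadratic in $D_z F_n$ and must be weighted by $|u_n|$ --- this is precisely why the absolute value appears inside the integrand of \cref{condition:R3_0}. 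A secondary technical nuisance is justifying the integrability needed to apply the duality formula and exchange limits, which is classically addressed in \cite{PeccatiSoleTaqquUtzet} by truncation and approximation of $u_n$ and $F_n$, then passing to the limit.
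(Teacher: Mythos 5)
Your argument is correct (granting the integrability caveats you flag, and reading the paper's displayed hypothesis $\nu(u_{n} F_{n}) \to \sigma^{2}$ as the evident misprint for $\nu(u_{n}\, DF_{n}) \to \sigma^{2}$, which is the form in which the paper restates it later as \cref{condition:PSTU}), but it is not the route this paper takes: the paper never re-proves \cref{theorem:PSTU} at all — it cites \cite{PeccatiSoleTaqquUtzet}, whose original argument is essentially the Stein-equation computation you reconstruct. What the paper proves with its own hands is the generalization \cref{theorem:convergence_stable_quali} (stable convergence to a Gaussian mixture $\lN(0,S^{2})$), and there the method is different: tightness plus the characteristic-function method — one differentiates $\psi_{n}(\lambda) = \dE\, G \e^{i \lambda F_{n}}$, applies the integration-by-parts formulae of \cref{lemma:integration_by_parts_energy,lemma:integration_by_parts_D} (whose error term is controlled by exactly the same quantity $\dE \int |u_{n}(z)|\, {(D_{z}^{+}F_{n})}^{2}\, \nu(\dd z)$ as your $\CR_{n}$), and identifies the limit through the ODE $\psi_{\infty}'(\lambda) = -\lambda \sigma^{2} \psi_{\infty}(\lambda)$. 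The trade-off between the two approaches is instructive: your Stein route yields quantitative (Wasserstein-type) bounds for free, but it is wedded to a deterministic variance, since Stein's equation is posed for the fixed target $\lN(0,\sigma^{2})$; the characteristic-function route is purely qualitative, but it survives when $\sigma^{2}$ is replaced by a random $S^{2}$ and when convergence in law is upgraded to stable convergence — which is the whole point of the paper (its quantitative statements with random variance are obtained by smart-path interpolation instead, \cref{proposition:quantitative_bound_energy_D}). Two minor points to tighten in your write-up: the standard Stein bounds on $|f'|_{\infty}$ and $|f''|_{\infty}$ require $\sigma^{2} > 0$ (the degenerate case follows directly from $\dE F_{n}^{2} = \dE\, \nu(u_{n} DF_{n}) \to 0$); and the membership $f(F_{n}) \in \dom D$ needed to invoke the duality is automatic, because Stein solutions are Lipschitz and $\dom D$ is stable under composition with Lipschitz functions, so no truncation is actually required at that step.
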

By integration by parts, we see that $\dE \nu(u_{n} DF_{n}) = \dE F_{n}^{2}$ and, at the heuristic level, the quantity $\nu(u_{n} DF_{n})$ controls the asymptotic variance of $F_{n}$.
The condition \cref{condition:R3_0} arises from the non-diffusive nature of the Poisson process.
Following our heuristic, it is very natural to ask what happens to the conclusions of \cref{theorem:PSTU} when $\nu(u_{n} DF_{n})$ converges to a non-negative random variable $S^{2}$.
\cref{theorem:convergence_stable_quali} states that, in this case, provided \cref{condition:R3_0} and a condition of asymptotic independence hold, $(F_{n})$ converges stably to the Gaussian mixture $\lN(0,S^{2})$.
In fact, in \cref{theorem:convergence_stable_quali}, we are also able to deal with vector-valued random variables.
In the same fashion, \cref{theorem:convergence_stable_quali_P} gives sufficient conditions involving $u_{n}$ and $D F_{n}$ to ensure the convergence of $(F_{n})$ to a Poisson mixture (thus generalizing a result by \fcite{PeccatiChenStein} for convergence to Poisson random variables).
When targeting Gaussian mixtures, we are also able to provide quantitative bounds in a variational distance between probability laws (\cref{theorem:convergence_stable_quantitative} for the multivariate case, and \cref{theorem:stable_convergence_wasserstein} for the univariate case).

Following a recent contribution by \fcite{DoeblerPeccatiFMT}, we derive from our analysis a stable fourth moment theorem: a sequence of iterated Itô-Poisson integrals converges stably to a Gaussian (with deterministic variance) if and only if its second and fourth moment converge to those of Gaussian (\cref{proposition:stable_fourth_moment}).
For the limit of a sequence of order $2$ Itô-Poisson stochastic integrals to be a Gaussian or Poisson mixture, we obtain sufficient conditions  expressed in terms of analytical conditions on the integrands (\cref{theorem:order_2_gaussian,theorem:order_2_poisson}).
We also apply our results to study the limit of a sequence of quadratic functionals of a rescaled Poisson process on the line (\cref{theorem:quadratic}); hence, adapting to the Poisson setting a theorem of \fcite{PeccatiYor} for a standard Brownian motion (generalized by \cite{NourdinNualartPeccatiStableLimits} to the setting of a sufficiently regular fractional Brownian motion using Malliavin-Stein techniques; and generalized to any fractional Brownian motion by \cite{PratelliRigo} using ad-hoc computations).
 
The paper is organized as follows.
\cref{section:recollection} fixes the notations for the rest of the paper; recalls the definitions of probabilistic distances and of the Poisson point process; gives more information on Gaussian and Poisson mixtures that serve as target distributions in our limit theorems; and gives a brief review on stochastic analysis for Poisson point processes with a focus Malliavin operators that are at the hearth of our method.
We present in~\cref{section:abstract_results} the main results of this paper: \cref{theorem:convergence_stable_quali,theorem:convergence_stable_quali_P,theorem:convergence_stable_quantitative}, they contain stable and quantitative limit theorems for Poisson functionals.
A detailed comparison of these results with the aforementioned works on the Gaussian space of~\cite{NourdinNualartLimitsSkorokhod,HarnettNualartCLTStratanovich,NourdinNualartPeccatiStableLimits}, as well as with limit theorems on the Poisson space \cite{LachiezeReyPeccatiContractions,LachiezeReyPeccatiMarkedProcess,PeccatiSoleTaqquUtzet,PeccatiChenStein}, follows in~\cref{section:comparison}.
All the proofs are postponed to \cref{section:proofs}.
Special attention is paid to stochastic integrals in~\cref{section:stochastic_integrals}.
From our main results, we deduce: \cref{proposition:stable_fourth_moment}, a stable version of the recently proved fourth moment theorem on the Poisson space of~\cite{DoeblerPeccatiFMT,DoeblerVidottoZheng}; \cref{theorem:order_2_gaussian,theorem:order_2_poisson}, giving analytical criteria for conditionally normal or Poisson limit for order $2$ Itô-Wiener stochastic integrals.
\cref{section:application} contains the application to quadratic functionals of rescaled Poisson processes on the line.
In \cref{section:quantitative_results}, we show that, when the limit is a Gaussian mixture, we can adapt our strategy to establish a quantitative bound, \cref{section:improvement_1d}, we refine our results when the $F_{n}$'s are univariate, and we establish, in \cref{theorem:stable_convergence_wasserstein}, a bound in the Wasserstein transport distance.
We end the paper with some open questions.

\section*{Acknowledgements}
I am indebted to \emph{Christian Döbler} and \emph{Giovanni Peccati} for a careful reading of a preparatory version of this manuscript, and numerous helpful discussions and insightful comments during all the preparatory phase of this work.
I am also thankful to \emph{Matthias Reitzner} for an invitation to Osnabrück, and encouragements to study the case of a Poisson mixture.
I gratefully acknowledge support by the European Union through the European Research Council Advanced Grant for \emph{Karl-Theodor Sturm} \enquote{Metric measure spaces and Ricci curvature – analytic, geometric and probabilistic challenges}.
\tableofcontents%
\section{Preliminaries}%
\label{section:recollection}

\subsection{Notations}

In all this paper, the random variables are defined on a sufficiently big probability space $(\Omega, \gO, \dP)$.
We also fix a measurable space $(Z, \gZ)$ equipped with a $\sigma$-measure $\nu$.
For $q \in \dN$, we write $\nu^{q}$ for the $q$-fold product measure of $\nu$, and, for $p \in [0,\infty]$, we write $\CL^{p}(\nu) = \CL^{p}(Z, \gZ, \nu)$ for the Lebesgue space of $p$-integrable (equivalence classes) of functions.

\subsection{Probabilistic approximations and limit theorems}%
\label{subsection:approximations}

\subsubsection*{Stable convergence}
(See~\cite[VIII.5c]{JacodShiryaev}.)
Let $\gW$ be a sub-$\sigma$-algebra of $\gO$.
A sequence of $\gW$-measurable random variables $(F_{n})$ is said to \emph{converge stably} to a $\gO$-measurable random variable $F_{\infty}$ whenever, for all $Z \in \CL^{\infty}(\gW)$:
\begin{equation*}
(F_{n}, Z) \tto{law} (F_{\infty},Z).
\end{equation*}
This convergence is denoted by
\begin{equation*}
F_{n} \tto{stably} F_{\infty}.
\end{equation*}
Of course, stable convergence implies convergence in law but the reverse implication does not hold.
In practice, we use the following characterisation of stable convergence.
\begin{proposition}\label{proposition:stable_convergence_G}
  Let $(F_{n})$ be a sequence of $\gW$-measurable random variables, and $F_{\infty}$ be $\gO$-measurable.
  Let $\CI \subset \CL^{1}(\gW)$ be a linear space, and $\CG \subset \CL^{\infty}(\gW)$.
  Assume that $\sigma(\CI) = \sigma(\CG) = \gW$.
  The following are equivalent:
  \begin{enumerate}[(i)]
    \item $F_{n} \tto{stably} F_{\infty}$;
    \item for all $\phi$ continuous and bounded, and all $Z \in \CL^{\infty}(\gW)$: $\dE Z \phi(F_{n}) \tto{} \dE Z \phi(F_{\infty})$;\label{condition:stable_convergence_conditional}
    \item for all $G \in \CG$ and for all $\lambda \in \dR^{d}$: $\dE \e^{i \langle \lambda, F_{n} \rangle} G \tto{} \dE \e^{\langle \lambda, F_{\infty} \rangle} G$;\label{condition:stable_convergence_G}
    \item for all $I \in \CI^{d}$ and for all $\lambda \in \dR^{d}$: $\dE \e^{i \langle\lambda,F_{n} + I\rangle} \tto{} \dE \e^{i \langle \lambda, F_{\infty} + I\rangle}$.\label{condition:stable_convergence_I}
    \end{enumerate}
\end{proposition}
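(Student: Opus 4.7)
The plan is to prove the chain \cref{condition:stable_convergence_conditional} $\Leftrightarrow$ (i) $\Rightarrow$ \cref{condition:stable_convergence_G}, \cref{condition:stable_convergence_I}, followed by the two reverse implications \cref{condition:stable_convergence_G} $\Rightarrow$ (i) and \cref{condition:stable_convergence_I} $\Rightarrow$ (i), where the assumption $\sigma(\CI)=\sigma(\CG)=\gW$ is used only in the last two steps. For (i) $\Leftrightarrow$ \cref{condition:stable_convergence_conditional}, I would just unwind the definitions. Stable convergence says $(F_{n},Z) \to (F_{\infty},Z)$ in law for every $Z \in \CL^{\infty}(\gW)$, equivalently $\dE \phi(F_{n}) h(Z) \to \dE \phi(F_{\infty}) h(Z)$ for $\phi \in \CC_{b}(\dR^{d})$ and $h \in \CC_{b}(\dR)$. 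Letting $Z$ and $h$ vary, $h(Z)$ exhausts $\CL^{\infty}(\gW)$, and using the defining property of conditional expectation $\dE \phi(F_{\infty})W = \dE \dE[\phi(F_{\infty})\mid\gW]W$ produces exactly the weak-$\sigma(\CL^{1}(\gW),\CL^{\infty}(\gW))$ statement in \cref{condition:stable_convergence_conditional}. The implications (i) $\Rightarrow$ \cref{condition:stable_convergence_G} and (i) $\Rightarrow$ \cref{condition:stable_convergence_I} are then immediate by taking $\phi = \cos\langle\lambda,\cdot\rangle_{\ell^{2}}, \sin\langle\lambda,\cdot\rangle_{\ell^{2}} \in \CC_{b}(\dR^{d})$ together with $W=G \in \CG$, respectively $W = \e^{i\langle\lambda,I\rangle_{\ell^{2}}} \in \CL^{\infty}(\gW)$.

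For \cref{condition:stable_convergence_I} $\Rightarrow$ (i), the key observation is that the linearity of $\CI$ decouples $\lambda$ from the shift. Given $\mu \in \dR^{d}$ and $J_{1},\dots,J_{m} \in \CI$, writing each coordinate of $I \in \CI^{d}$ as a linear combination $\sum_{k}(\mu_{k}/\lambda_{k})J_{k}$ (with $\lambda$ chosen in the dense set of vectors with no zero coordinate, then extended by continuity of characteristic functions) shows that \cref{condition:stable_convergence_I} upgrades to the convergence of the full joint characteristic function of $(F_{n},J_{1},\dots,J_{m})$. Lévy's continuity theorem then yields joint convergence in law. Since $\sigma(\CI)=\gW$, the functional monotone class theorem combined with Lusin's theorem allows one to approximate any $W \in \CL^{\infty}(\gW)$ in $\CL^{1}$-norm by bounded continuous functions of finite tuples drawn from $\CI$; the uniform bound on $|\phi(F_{n})|$ then lets us pass to the limit in $\dE \phi(F_{n})W$ for $\phi \in \CC_{b}(\dR^{d})$, yielding \cref{condition:stable_convergence_conditional}.

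For \cref{condition:stable_convergence_G} $\Rightarrow$ (i), fix $\lambda \in \dR^{d}$ and define
\begin{equation}
\CD_{\lambda} = \left\{W \in \CL^{\infty}(\gW) : \dE \e^{i\langle\lambda,F_{n}\rangle_{\ell^{2}}} W \to \dE \e^{i\langle\lambda,F_{\infty}\rangle_{\ell^{2}}} W\right\}.
\end{equation}
Since $|\e^{i\langle\lambda,F_{n}\rangle_{\ell^{2}}}|\leq 1$, an $\epsilon/3$-argument shows that $\CD_{\lambda}$ is a vector space which is closed under bounded $\CL^{1}$-approximation, hence under uniformly bounded pointwise limits. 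Replacing $\CG$ by the (multiplicative) algebra generated by $\CG$ and the constants, which still lies in $\CL^{\infty}(\gW)$ and still generates $\gW$ as a $\sigma$-algebra, the functional monotone class theorem forces $\CD_{\lambda}$ to contain every bounded $\gW$-measurable function. This recovers \cref{condition:stable_convergence_G} with $\CG$ replaced by $\CL^{\infty}(\gW)$, from which \cref{condition:stable_convergence_conditional} follows (for exponential $\phi$, then for general $\phi \in \CC_{b}(\dR^{d})$ by a uniform approximation argument, e.g. Stone--Weierstrass).

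The main obstacle is the closure step in the last paragraph: the hypothesis on $\CG$ is only $\sigma(\CG)=\gW$, so $\CG$ itself need not be multiplicative or even contain constants, and one must be careful when invoking the functional monotone class theorem. The fix is to pass to the algebra generated by $\CG$, which preserves both membership in $\CL^{\infty}(\gW)$ and the generated $\sigma$-algebra; verifying that this enlarged class still lies in $\CD_{\lambda}$ reduces to checking stability of $\CD_{\lambda}$ under pointwise products of its elements, which follows from uniform $\CL^{1}$-approximation and the uniform bound on $|\e^{i\langle\lambda,F_{n}\rangle_{\ell^{2}}}|$.
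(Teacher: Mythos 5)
Your proofs of (i) $\Leftrightarrow$ (ii), of (i) $\Rightarrow$ (iii) and (i) $\Rightarrow$ (iv), and of (iv) $\Rightarrow$ (i) are correct. The (iv) $\Rightarrow$ (i) argument is in fact more robust than the paper's: you keep the full joint characteristic functions of $(F_{n}, J_{1}, \dots, J_{m})$ (using linearity of $\CI$ to absorb $\sum_{k}\mu_{k}J_{k}$ into $I$), apply Lévy's theorem, and then run the monotone class theorem over the \emph{multiplicative} class of bounded continuous functions of finite tuples from $\CI$. The paper instead passes to the linearized statement $\dE J \e^{i\langle\lambda,F_{n}\rangle_{\ell^{2}}} \to \dE J \e^{i\langle\lambda,F_{\infty}\rangle_{\ell^{2}}}$, $J \in \CI$, via the $t \to 0$ trick, and then concludes \enquote{since $\CI$ generates $\gW$}; the example below shows that this last inference is not valid for a merely generating linear space, so retaining the multiplicative (exponential) structure, as you do, is exactly what makes the conclusion legitimate. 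Two cosmetic points: the detour through a dense set of $\lambda$'s with nonzero coordinates is unnecessary (for $\lambda \ne 0$ put the whole combination $\sum_{k}\mu_{k}J_{k}$ into one nonzero coordinate of $I$, since $0 \in \CI$; the case $\lambda = 0$ is trivial), and Lusin's theorem plays no role.

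The genuine gap is in (iii) $\Rightarrow$ (i), at the precise step you flag and then claim to repair: $\CD_{\lambda}$ is \emph{not} stable under products of its elements, and no uniform bound or $\CL^{1}$-approximation argument can make it so, because knowing $\dE \e^{i\langle\lambda,F_{n}\rangle_{\ell^{2}}}W_{j} \to \dE \e^{i\langle\lambda,F_{\infty}\rangle_{\ell^{2}}}W_{j}$ for $j=1,2$ gives no control on the test function $W_{1}W_{2}$. Worse, the implication (iii) $\Rightarrow$ (i) is \emph{false} under the literal hypothesis $\sigma(\CG) = \gW$, so no argument can close the gap. Counterexample: let $\epsilon_{1}, \epsilon_{2}$ be independent Rademacher variables, $\gW = \sigma(\epsilon_{1},\epsilon_{2})$, $\CG = \{1, \epsilon_{1}, \epsilon_{2}\}$ (so $\sigma(\CG) = \gW$), and $F_{n} = {(-1)}^{n}\epsilon_{1}\epsilon_{2}$, $F_{\infty} = \epsilon_{1}\epsilon_{2}$. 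Since $F_{n} \in \{\pm 1\}$, we have $\e^{i\lambda F_{n}} = \cos\lambda + i{(-1)}^{n}\epsilon_{1}\epsilon_{2}\sin\lambda$, whence $\dE \e^{i\lambda F_{n}}G = \dE \e^{i\lambda F_{\infty}}G$ for every $n$, every $\lambda$, and every $G \in \CG$; so (iii) holds (with equality, not just convergence). Yet $\dE \e^{i\lambda F_{n}}\epsilon_{1}\epsilon_{2} = i{(-1)}^{n}\sin\lambda$ does not converge, so $F_{n}$ does not converge stably. Equivalently: $1, \epsilon_{1}, \epsilon_{2} \in \CD_{\lambda}$ while $\epsilon_{1}\epsilon_{2} \notin \CD_{\lambda}$, which exhibits concretely the failure of product stability that your last paragraph asserts.

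What is true, and what both your argument and the paper's one-line justification (\enquote{stable convergence is also equivalent with (iii) since $\CG$ generates $\gW$}) implicitly require, is a stronger hypothesis on $\CG$: either that $\CG$ is stable under products and contains the constants --- then the multiplicative class $\CG$ lies in $\CD_{\lambda}$ \emph{by hypothesis}, no enlargement is needed, and your monotone class argument goes through verbatim --- or that the linear span of $\CG$ is dense in $\CL^{1}(\gW)$, in which case your $\epsilon/3$-argument alone suffices. Both reinforcements hold for the class $\CG$ the paper actually uses: it is an algebra containing $1$, and \cref{proposition:density_G} gives its density in every $\CL^{p}(\dP)$; so the paper's subsequent results are unaffected. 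But as stated, neither the equivalence with (iii) nor your proposed fix is correct, and the hypothesis on $\CG$ (and, for the paper's own proof of (iv), on $\CI$) should be strengthened accordingly.
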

\begin{proof}
  Stable convergence is equivalent to \cref{condition:stable_convergence_conditional} by \cite[Proposition VIII.5.33.v]{JacodShiryaev}.
  Thus, stable convergence is also equivalent with \cref{condition:stable_convergence_G} since $\CG$ generates $\gW$.
  By linearity of $\CI$, \cref{condition:stable_convergence_I} implies that for all $J \in \CI$, all $t \in \dR$, and all $\lambda \in \dR^{d}$, as $n \to \infty$: $\dE \e^{it J} \e^{i \langle \lambda, F_{n} \rangle} \to \dE \e^{it J} \e^{i \langle \lambda, F_{\infty} \rangle}$.
  Letting $t \to 0$ in $(1-\e^{itJ})t^{-1} \to i J$, shows that $\dE J \e^{i \langle \lambda, F_{n} \rangle} \to \dE J \e^{i \langle \lambda, F_{\infty} \rangle}$, when $n \to \infty$.
  Since $\CI$ generates $\gW$, we conclude that \cref{condition:stable_convergence_I} implies stable convergence.
  The converse implication is immediate.
\end{proof}

\subsubsection*{Probabilistic variational distances}
The \emph{Wasserstein distance} between two $\dR^{d}$ random variables $X$ and $Y$ is defined by
\begin{equation*}
  d_{1}(X,Y) = \inf \dE |\tilde{X} - \tilde{Y}|,
\end{equation*}
where $|\cdot|$ is the Euclidean norm, and the infimum runs over all couple of random variables $(\tilde{X}, \tilde{Y})$ such that $\tilde{X}$ has the same law as $X$ and $\tilde{Y}$ has the same law as $Y$.
Due to the Kantorovich duality, the Wasserstein distance (see~\cite[Theorem 2.1]{GozlanLeonard}) between the laws of two integrable $\dR^{d}$-valued random variables $X$ and $Y$ can be rewritten:
\begin{equation*}
  d_{1}(X,Y) = \sup \dE \phi(X) - \dE \phi(Y),
\end{equation*}
where the supremum runs over all function $\phi \colon \dR^{d} \to \dR$ with Lipschitz constant not greater than $1$.
In this paper, as it is common when working with Stein's method, we consider a distance, whose variational formulation for two integrable $\dR^{d}$-valued random variables $X$ and $Y$ is given by
\begin{equation*}
  d_{3}(X,Y) = \sup \left\{ \dE \phi(X) - \dE \phi(Y) : \phi \in \CF_{3} \right\},
\end{equation*}
where $\CF_{3}$ if the set of all $\phi \colon \dR^{d} \to \dR$, thrice continuously differentiable with the second and third derivatives bounded by $1$.

\subsubsection*{Link with the convergence in law}
These two distances depend on $X$ and $Y$ only through their laws.
If $Y \sim \nu$, we sometimes write $d_{i}(X, \nu)$ for $d_{i}(X,Y)$ ($i \in \{1,3\}$).
The Wasserstein distance induces a topology on the space of probability measures that corresponds to the convergence in law together with the convergence of the first moment~\cite[Theorem 6.9]{VillaniOldAndNew}.
The distance $d_{3}$ induces a topology on the space of probability measures which is strictly stronger than the topology of the convergence in law.

\subsection{Definition of Poisson point processes}
We define $\CM_{\bar{\dN}}(Z)$  to be the space of all countable sums of $\dN$-valued measures on $(Z, \gZ)$.
The space $\CM_{\bar{\dN}}(Z)$ is endowed with the $\sigma$-algebra $\gM_{\bar{\dN}}(Z)$, generated by the \emph{cylindrical mappings}
\begin{equation*}
  \xi \in \CM_{\bar{\dN}}(Z) \mapsto \xi(B) \in \dN \cup \{\infty\},\quad B \in \gZ.
\end{equation*}
A random variable $\eta$ with values in $\CM_{\bar{\dN}}(Z)$ is a \emph{Poisson point process} (or \emph{Poisson random measure}) with intensity $\nu$ if the following two properties are satisfied:
\begin{enumerate}
  \item for all $B_{1}, \dots, B_{n} \in \gZ$ pairwise disjoint, $\eta(B_{1}), \dots, \eta(B_{n})$ are independent;
  \item for $B \in \gZ$ with $\nu(B) < \infty$, $\eta(B)$ is a Poisson random variable with mean $\nu(B)$.
\end{enumerate}
Poisson processes with $\sigma$-finite intensity exist \cite[Theorem 3.6]{LastPenrose}.
Moreover, by \cite[Corollary 3.7]{LastPenrose}, there exists a family of random variables $(X_{i})$ such that
\begin{equation*}
  law(\eta) = law\left(\sum_{i=0}^{\eta(Z)} \delta_{X_{i}}\right).
\end{equation*}
Since this paper is concerned only with distributional properties of Poisson point processes, we always assume that
\begin{equation*}
  \eta = \sum_{i=0}^{\eta(Z)} \delta_{X_{i}}.
\end{equation*}
We let $\gW$ be the $\sigma$-algebra generated by $\eta$.
Our definition of $\eta$ implies that $\gW \subset \gO$, and we often tacitly assume that $(\Omega, \gO, \dP)$ also supports random objects (such as a Brownian motion) independent of $\eta$.
We always look at stable convergence with respect to $\gW$.
However, for simplicity, unless otherwise specified, we assume that random variables are $\gW$-measurable.
In particular, we write $\CL^{2}(\dP)$ for $\CL^{2}(\Omega,\gW, \dP)$.

\subsection{Gaussian and Poisson mixtures}
As anticipated, we shall be interested in the stable convergence (with respect to $\gW$) of a sequence of Poisson functionals $(F_{n})$ to conditionally Gaussian and Poisson random variables.
Informally, we refer to such objects as \emph{Gaussian mixture} and \emph{Poisson mixture}.
Let $N$ be a standard Gaussian vector independent of $\eta$ and $S \in \CL^{2}(\gW)$.
We denote by $\lN(0,S^{2})$ the law of the Gaussian mixture $SN$.
Similarly, for $N$ a Poisson process on $\dR_{+}$ (with intensity the Lebesgue measure) independent of $\eta$ and $M \in \CL^{2}(\gW)$ non-negative, we write $\lP \lo(M)$ for the law of the (compensated) Poisson mixture $N(1_{[0,M]}) - M$.
We have a characterisation of these two laws in term of their conditional Fourier transforms:
$F \sim \lN(0,S^{2})$ if and only if
\begin{equation}\label{equation:conditional_Fourier_Gaussian_mixture}
  \dE [\e^{i \lambda F} | \eta] = \exp\left(-S^{2} \frac{\lambda^{2}}{2}\right);
\end{equation}
while $F \sim \lP\lo(M)$ if and only if 
\begin{equation}\label{equation:conditional_Fourier_Poisson_mixture}
  \dE [\e^{i \lambda F} | \eta] = \exp\left(M(\e^{i\lambda} - i \lambda - 1)\right).
\end{equation}

\subsection{Stochastic analysis for Poisson point processes}\label{section:stochastic_analysis}

\subsubsection*{The Mecke formula}
According to~\cite[Theorem 4.1]{LastPenrose}, we have for all measurable $f \colon \CM_{\bar{\dN}}(Z) \times Z \to [0,\infty]$:
\begin{equation}\label{equation:Mecke}
  \dE \int f(\eta, z) \eta(\dd z) = \int \dE f(\eta + \delta_{z}, z) \nu(\dd z).
\end{equation}
If $f$ is replaced by a measurable function with value in $\dR$ the previous formula still holds provided both sides of the identity are finite when we replace $f$ by $|f|$.

\subsubsection*{The representative of a functional}
For every random variable $F$ measurable with respect to $\eta$ we can write $F = f(\eta)$, for some measurable $f \colon \CM_{\bar{\dN}}(Z) \to \dR$ uniquely defined $\dP \circ \eta^{-1}$-almost surely on $(\CM_{\bar{\dN}}(Z), \gM_{\bar{\dN}}(Z))$.
We call such $f$ a \emph{representative} of $F$.
In this section, $F$ denotes a random variable, measurable with respect to $\sigma(\eta)$, and $f$ denotes one of its representatives.

\subsubsection*{The add and drop operators}
Given $z \in Z$, we let
\begin{align*}
  & D^{+}_{z} F =  f(\eta + \delta_{z}) - f(\eta); \\
  & D^{-}_{z} F =  (f(\eta) - f(\eta - \delta_{z})) 1_{z \in \eta}.
\end{align*}
The operator $D^{+}$ (resp.\ $D^{-}$) is called the \emph{add operator} (resp.\ \emph{drop operator}).
Due to the Mecke formula \cref{equation:Mecke}, these operations are well-defined on random variables (that is, $D^{+}$ and $D^{-}$ do not depend on the choice of the representative of $F$).
\begin{lemma}\label{lemma:D_bounded}
  Let $F \in \CL^{\infty}(\dP)$, then $D^{+}F \in \CL^{\infty}(\dP \otimes \nu)$.
\end{lemma}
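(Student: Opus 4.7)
Let $C = \lVert F \rVert_{\infty}$, so that $|f(\eta)| \leq C$ almost surely. The target is to prove that $|D^{+}_{z} F| = |f(\eta + \delta_{z}) - f(\eta)| \leq 2C$ for $\dP \otimes \nu$-almost every $(\omega, z)$; since the second term is already bounded by $C$ for $\dP \otimes \nu$-a.e.\ $(\omega, z)$ (the statement does not involve $z$), it suffices to show that $|f(\eta + \delta_{z})| \leq C$ for $\dP \otimes \nu$-a.e.\ $(\omega, z)$.

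The plan is to apply the Mecke formula~\cref{equation:Mecke} to the non-negative measurable function $g(\mu, z) = 1_{|f(\mu)| > C}$, which does not depend on $z$. On the left-hand side we obtain
\begin{equation}
  \dE \int g(\eta, z)\, \eta(\dd z) = \dE \bigl( \eta(Z) \cdot 1_{|F| > C} \bigr) = 0,
\end{equation}
because $1_{|F| > C} = 0$ almost surely, so the product vanishes identically. The right-hand side reads
\begin{equation}
  \int \dE g(\eta + \delta_{z}, z)\, \nu(\dd z) = \int \dP\bigl( |f(\eta + \delta_{z})| > C \bigr)\, \nu(\dd z).
\end{equation}
Equating the two, we deduce that $\dP(|f(\eta + \delta_{z})| > C) = 0$ for $\nu$-almost every $z$. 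By Fubini-Tonelli, this is equivalent to $|f(\eta + \delta_{z})| \leq C$ for $\dP \otimes \nu$-almost every $(\omega, z)$, which concludes the proof with the bound $\lVert D^{+} F \rVert_{\CL^{\infty}(\dP \otimes \nu)} \leq 2 \lVert F \rVert_{\infty}$.

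There is no real obstacle: the only mild subtlety is that $D^{+}F$ is a priori defined via a choice of representative $f$, but the excerpt has already observed that the Mecke formula ensures the result is independent of this choice, so applying Mecke directly is legitimate. One should simply be careful to invoke the non-negative version of~\cref{equation:Mecke} (valid without integrability assumptions), which applies here since the integrand is an indicator.
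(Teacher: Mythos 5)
Your proof is correct and follows essentially the same route as the paper: both apply the Mecke formula \cref{equation:Mecke} to the indicator of the event where $f$ exceeds its essential bound, note that the $\eta$-side integral vanishes because that event is $\dP$-null, and conclude that $f(\eta+\delta_{z})$ obeys the same bound $\dP\otimes\nu$-almost everywhere. The only (cosmetic) difference is that you work with the two-sided event $\{|f(\mu)|>C\}$ and get the explicit bound $2\lVert F\rVert_{\infty}$ in one pass, whereas the paper phrases it via one-sided threshold sets $U$ and $V$; the paper additionally records the measurability of $z \mapsto \delta_{z}$, which your Fubini--Tonelli step implicitly uses.
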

\begin{proof}
  First of all, $\delta \colon Z \ni z \mapsto \delta_{z} \in \CM_{\bar{\dN}}(Z)$ is measurable (if $A$ is of the form $\{ \eta(B) = k \}$ for some $B \in \gZ$, then the pre-image by $\delta$ of $A$ is $B$, if $k = 1$; and the pre-image is empty, if $k > 1$).
  Hence, $D^{+}F$ is bi-measurable.
  Now let
  \begin{align}
    & U = \{t \in \dR,\, \text{such that}\ \dP(F \geq t) = 0 \}; \\
    & V = \{t \in \dR,\, \text{such that}\ (\dP \otimes \nu)(F + D_{z}^{+}F \geq t) = 0 \}.
  \end{align}
  By assumption $U \ne \emptyset$, and we want to show that $V \ne \emptyset$.
  Take $t \in U$, by the Mecke formula \cref{equation:Mecke}, we have that
  \begin{equation*}
    \dE \int 1_{\{F + D_{z}^{+}F \geq t\}} \nu(\dd z) = \dE \int 1_{\{F \geq t\}} \eta(\dd z) = 0.
  \end{equation*}
  Hence $t \in V$, this concludes the proof.
\end{proof}

\subsubsection*{Malliavin derivative}
For a random variable $F$, we write $F \in \dom D$ whenever: $F \in \CL^{2}(\dP)$ and
\begin{equation*}
  {|F|}_{1} := \int_{Z} \dE {(D_{z}^{+}F)}^{2} \nu(\dd z) < \infty.
\end{equation*}
Given $F \in \dom D$, we write $DF$ to denote the random mapping $DF \colon Z \ni z \mapsto D_{z}^{+}F$.
We regard $D$ as an unbounded operator $\CL^{2}(\dP) \to \CL^{2}(\dP \otimes \nu)$ with domain $\dom D$.

\subsubsection*{The divergence operator}
We consider the \emph{divergence operator} $\delta = D^{*} \colon \CL^{2}(\dP \otimes \nu) \to \CL^{2}(\nu)$, that is the unbounded adjoint of $D$.
Its domain $\dom \delta$ is composed of random functions $u \in \CL^{2}(\dP \otimes \nu)$ such that there exists a constant $c > 0$ such that
\begin{equation*}
  \left|\dE \int D^{+}_{z}F u(z) \nu(\dd z)\right| \leq c \sqrt{\dE F^{2}},\quad \forall F \in \dom D.
\end{equation*}
For $u \in \dom \delta$, the quantity $\delta u \in \CL^{2}(\dP)$ is completely characterised by the duality relation
\begin{equation}\label{equation:integration_by_parts_delta}
  \dE  G \delta u = \dE \int u(z) D_{z} F \nu(\dd z), \quad \forall F \in \dom D.
\end{equation}
If $h \in \CL^{2}(\nu)$, then $h \in \dom \delta$ and $\delta h = I_{1}(h)$.
From \cite[Theorem 5]{LastAnaSto}, we have the following Skorokhod isometry.
For $u \in \CL^{2}(\dP \otimes \nu)$, $u \in \dom \delta$ if and only if $\dE \int {(D_{z}^{+}u(z'))}^{2} \nu(\dd z) \nu(\dd z') < \infty$ and, in that case:
\begin{equation}\label{equation:Skorokhod_isometry}
  \dE {(\delta u)}^{2} = \dE \int {u(z)}^{2} \nu(\dd z) + \dE \int D_{z}^{+} u(z') D_{z'}^{+} u(z) \nu(\dd z) \nu(\dd z').
\end{equation}
The Skorokhod isometry implies the following Heisenberg commutation relation.
For all $u \in \dom \delta$, and all $z \in Z$ such that $z' \mapsto D_{z}^{+}u(z') \in \dom \delta$:
\begin{equation*}
  D_{z}\delta u = u(z) + \delta D_{z}^{+} u.
\end{equation*}
From \cite[Theorem 6]{LastAnaSto}, we have the following pathwise representation of the divergence: if $u \in \dom \delta \cap \CL^{1}(\dP \otimes \nu)$, then
\begin{equation}\label{equation:divergence_pathwise}
  \delta u = \int (1- D_{z}^{-}) u(z) \eta(\dd z) - \int u(z) \nu(\dd z).
\end{equation}
Note that $\dom \delta \cap \CL^{1}(\dP \otimes \nu)$ is dense in $\dom \delta$.

\subsubsection*{The Ornstein-Uhlenbeck generator}
The \emph{Ornstein-Uhlenbeck generator} $L$ is the unbounded self-adjoint operator on $\CL^{2}(\dP)$ verifying
\begin{equation*}
  \dom L = \{ F \in \dom D,\, \text{such that}\ DF \in \dom \delta \} \quad \text{and} \quad L = - \delta D.
\end{equation*}
Classically, $\dom L$ is endowed with the Hilbert norm $\dE F^{2} + \dE {(LF)}^{2}$.
The eigenvalues of $L$ are the non-positive integers and for $q \in \dN$ the eigenvectors associated are the so-called \emph{Wiener-Itô multiple integrals of order $q$}.
The kernel of $L$ coincides with the set of constants and the \emph{pseudo-inverse of $L$} is defined on the quotient $\CL^{2}(\dP) \setminus \ker L$, that is the space of centered square integrable random variables.
For $F \in \CL^{2}(\dP)$ with $\dE F = 0$, we have $LL^{-1}F = F$.
Moreover, if $F \in \dom L$, we have $L^{-1}LF = F$.
As a consequence of \cref{equation:Skorokhod_isometry}, $\dom D^{2} = \dom L$.
In particular, if $F$ has a vanishing expectation, then $L^{-1}F \in \dom D^{2}$.

\subsubsection*{The energy bracket}
As anticipated, a key object to consider in our study is the quantity $\nu(u DF)$, which is just the scalar product in $\CL^{2}(\nu)$ of the two random functions $u$ and $DF \in \CL^{2}(\nu \otimes \dP)$.
However, it turns out that the mapping $(F, G) \mapsto \nu(DF DG)$ is \emph{not} the carré du champ associate with $L$ (see, \cite{BouleauHirsch} for definitions).
Consequently, limit theorems formulated using the scalar product are not well-adapted to obtain convergence of stochastic integrals: this crucial observation allows \cite{DoeblerPeccatiFMT} to derive a fourth moment theorem in full generality on the Poisson space.
Given two elements $u \in \CL^{2}(\nu \otimes \dP)$ and $v \in \CL^{2}(\nu \otimes \dP)$ (possibly vector valued), we define the \emph{energy bracket} of $u$ and $v$: it is the random matrix
\begin{equation*}
  {[u,v]}_{\Gamma} = \frac{1}{2} \int u(z) \otimes v(z) \nu(\dd z) + \frac{1}{2} \int (1-D_{z}^{-})u(z) \otimes (1-D_{z}^{-})v(z) \eta(\dd z).
\end{equation*}
In the paper, we also consider the related object:
\begin{equation*}
  {[u, v]}_{\nu} = \nu(u \otimes v) = \int u(z) \otimes v(z) \nu(\dd z).
\end{equation*}
If $u$ and $v$ are real-valued, then ${[u, v]}_{\nu}$ is simply the scalar product of $u$ and $v$ in $\CL^{2}(\nu)$.
  By the Cauchy-Schwarz inequality ${[u,v]}_{\nu} \in \CL^{1}(\dP)$, and by the Mecke formula:
  \begin{equation}\label{equation:expectation_energy_bracket}
    \dE {[u,v]}_{\Gamma} = \dE {[u,v]}_{\nu}.
  \end{equation}
Moreover, if $F$ and $G \in \dom D$, we write
\begin{equation*}
  \Gamma(F,G) = {[DF, DG]}_{\Gamma}.
\end{equation*}
In \cite{HerryCarreDuChamp}, we prove that $\Gamma$ is indeed the carré du champ associated with the operator $L$ on the Poisson; this identity is our main motivation for introducing the energy bracket.
We also prove in \cite{HerryCarreDuChamp} that
\begin{equation}\label{equation:derivation_energy}
  \dE {[D(FG), u ]}_{\Gamma} = \dE G{[DF, u]}_{\Gamma} + \dE F{[DG, u]}_{\Gamma}.
\end{equation}
We denote by ${[u,v]}_{\widetilde{\beta}}$ the symmetrization of the matrix ${[u,v]}_{\beta}$ ($\beta \in \{ \Gamma, \nu \}$).

\subsubsection*{Test functions}
We say that a measurable function $\psi \colon Z \to \dR_{+}$ such that $\nu(\psi > 0) < \infty$ is a \emph{test function}.
We let $\CG \subset \CL^{\infty}(\dP)$ be the linear span of the random variables of the form $\e^{-\eta(\psi)}$, where $\psi$ is a test function.
Observe that $\CG$ is a sub-algebra of $\CA$ and that $\dom D$ is stable by multiplication by elements of $\CG$.
In view of~\cite[Lemma 2.2]{LastPenroseFockSpacePoisson} and its proof, we have that
\begin{proposition}\label{proposition:density_G}
  The set $\CG$ is dense in $\CL^{2}(\dP)$ (and in fact in every $\CL^{p}(\dP)$, $1 \leq p < \infty$).
  Moreover, the $\sigma$-algebra generated by $\CG$ coincides with $\gW$.
\end{proposition}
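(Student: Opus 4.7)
The plan is to handle the two claims separately: first identify $\sigma(\CG)$ with $\gW$, then deduce density in $\CL^{p}(\dP)$ via a Hahn-Banach duality argument powered by injectivity of the Laplace transform for signed measures on $\dN^{n}$.

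For the $\sigma$-algebra identification, the inclusion $\sigma(\CG) \subseteq \gW$ is immediate since every $\e^{-\eta(\psi)}$ is $\sigma(\eta)$-measurable. For the reverse, I would specialise $\psi$ to $t \cdot 1_{B}$ with $t > 0$ and $B \in \gZ$ of finite $\nu$-measure: the random variables $\e^{-t \eta(B)}$ for all $t > 0$ lie in $\CG$, and since the family $\{k \mapsto \e^{-tk};\, t > 0\}$ separates points of $\dN \cup \{\infty\}$, $\eta(B)$ is itself $\sigma(\CG)$-measurable. Writing $\eta(B) = \sum_{k} \eta(B \cap Z_{k})$ along a countable exhaustion $Z = \bigcup_{k} Z_{k}$ by sets of finite $\nu$-measure (which exists by $\sigma$-finiteness) extends the conclusion to arbitrary $B \in \gZ$, yielding $\gW \subseteq \sigma(\CG)$.

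For density in $\CL^{p}(\dP)$ with $p \in [1, \infty)$, I would apply Hahn-Banach: it is enough to show that any $F$ in the topological dual $\CL^{q}(\dP)$ (with $1/p + 1/q = 1$, interpreting $q = \infty$ when $p = 1$) satisfying $\dE[FG] = 0$ for all $G \in \CG$ must vanish. Here I would use that $\CG$ is closed under multiplication, since $\e^{-\eta(\psi_{1})} \e^{-\eta(\psi_{2})} = \e^{-\eta(\psi_{1} + \psi_{2})}$ and the sum of two test functions is a test function. Applying the orthogonality hypothesis to $\e^{-\sum_{i} t_{i} \eta(B_{i})} \in \CG$, for pairwise disjoint $B_{1}, \dots, B_{n}$ of finite $\nu$-measure and $t_{i} \geq 0$ arbitrary, says exactly that the Laplace transform (after the change of variables $s_{i} = \e^{-t_{i}} \in (0,1]$) of the finite signed measure on $\dN^{n}$ given by the image of $F \cdot \dP$ under $(\eta(B_{1}), \dots, \eta(B_{n}))$ vanishes identically; the identity theorem for power series then forces that signed measure to be zero, so $\dE[F \mid \sigma(\eta(B_{1}), \dots, \eta(B_{n}))] = 0$.

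To conclude, I would invoke the first half of the statement: using $\sigma$-finiteness of $\nu$, one may pick an increasing sequence of sub-$\sigma$-algebras $\gW_{n} = \sigma(\eta(B_{1}^{n}), \dots, \eta(B_{k_{n}}^{n}))$ whose union generates $\gW$, and then Lévy's $\CL^{1}$ martingale convergence theorem gives $F = \dE[F \mid \gW] = \lim_{n} \dE[F \mid \gW_{n}] = 0$. The main obstacle I anticipate is not analytical but bookkeeping: producing a single countable increasing generating sequence of $\sigma$-algebras of the required product form in the abstract $\sigma$-finite setting. Once that is in place, the rest of the argument is insensitive to $p \in [1, \infty)$, since the injectivity of the Laplace transform on $\dN^{n}$-valued signed measures is a purely discrete phenomenon.
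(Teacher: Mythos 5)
Your core argument is sound, and it is necessarily a different route from the paper's, because the paper gives no proof at all: it simply invokes \cite[Lemma 2.2]{LastPenroseFockSpacePoisson} and its proof. Your identification $\sigma(\CG)=\gW$ is correct (indeed a single $t$ suffices, since $x\mapsto \e^{-x}$ is injective on $[0,\infty]$), and the Hahn--Banach reduction together with the injectivity of the Laplace transform of finite signed measures on $\dN^{n}$ (via the identity theorem for power series) correctly yields $\dE[F\mid\sigma(\eta(B_{1}),\dots,\eta(B_{n}))]=0$ for every finite collection of finite-measure sets. Note in passing that disjointness of the $B_{i}$ is never used there: $\psi=\sum_{i}t_{i}1_{B_{i}}$ is a test function and $\eta(\psi)=\sum_{i}t_{i}\eta(B_{i})$ regardless.

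The genuine gap is the last step, exactly the one you flagged and left unresolved. A single increasing sequence $\gW_{n}=\sigma(\eta(B^{n}_{1}),\dots,\eta(B^{n}_{k_{n}}))$ with $\sigma(\bigcup_{n}\gW_{n})=\gW$ need not exist: the paper allows an arbitrary measurable space $(Z,\gZ)$, and if $\gZ$ is not countably generated then $\gW$ need not be countably generated either, even modulo $\dP$-null sets. For instance, take $Z=\{0,1\}^{I}$ with $I$ uncountable, $\gZ$ the product $\sigma$-algebra, and $\nu$ a product probability measure: on the event that $\eta$ has exactly one atom, $\gW$ essentially contains a copy of $\gZ$, which no countable family generates up to null sets. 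So Lévy's upward theorem, as you deploy it, cannot close the argument in the stated generality.

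The repair is routine measure theory and needs no countability of $\gZ$. Let $\CP$ be the union, over all finite collections $B_{1},\dots,B_{n}\in\gZ$ of finite $\nu$-measure, of the $\sigma$-algebras $\sigma(\eta(B_{1}),\dots,\eta(B_{n}))$. Since disjointness is not required, $\CP$ is closed under finite intersections (merge the two collections), hence a $\pi$-system; and $\sigma(\CP)=\gW$ by $\sigma$-finiteness, because $\eta(B)=\lim_{n}\sum_{k\leq n}\eta(B\cap Z_{k})$ along a disjoint exhaustion of $Z$ by finite-measure sets. Your Laplace-transform step gives $\dE[F1_{A}]=0$ for all $A\in\CP$, and the class of events $A$ with $\dE[F1_{A}]=0$ is a $\lambda$-system (using $F\in\CL^{1}(\dP)$ and dominated convergence). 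Dynkin's $\pi$--$\lambda$ theorem then gives $\dE[F1_{A}]=0$ for all $A\in\gW$, and since the paper's convention makes $F$ $\gW$-measurable (recall $\CL^{q}(\dP)$ means $\CL^{q}(\Omega,\gW,\dP)$), this forces $F=0$. Alternatively, one can keep your martingale argument but localize it: any single event $A\in\gW$ is measurable with respect to $\sigma(\eta(B_{i}):i\in\dN)$ for some countable family, from which a countable filtration adapted to that particular $A$ is easily built; proving $\dE[F1_{A}]=0$ event by event is enough.
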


\subsubsection*{Extended Malliavin operators}
As mentioned above, we assume that $\gO$ is bigger than $\gW$.
However, every $\gO$-measurable random variable $F$ can be written $F = f(\eta, \Xi)$, where $\Xi$ is an additional randomness independent of $\eta$.
We define for every such $F$ the quantity $D_{z}^{+}F = f(\eta + \delta_{z}, \Xi) - f(\eta, \Xi)$.
It is an (easy) exercise to check that we can accordingly modify all the operators and functional spaces defined above, and that their properties are left unchanged.
Remark that our definition implies that, if $F$ is independent of $\eta$, then $D^{+}F = 0$, and that, if $F = ab$ with $a$ independent of $\gW$ and $b$ measurable with respect to $\gW$, $D^{+}F = a D^{+}b$.

\section{Main abstract results}%
\label{section:abstract_results}

\subsection*{Outline}
\cref{theorem:convergence_stable_quali} gives sufficient conditions for the stable convergence of a sequence of Poisson functionals to a Gaussian mixture.
While \cref{theorem:convergence_stable_quali_P} gives sufficient conditions for the stable convergence of a sequence of Poisson functionals to a Poisson mixture.
In \cref{section:quantitative_results}, we derive quantitative bounds for the convergence to a Gaussian mixture only.
However, in the case of Gaussian mixture, obtaining quantitative estimates requires to control additional terms and is quite technical.
This is why we treat first the simple qualitative bound both for Gaussian and Poisson mixtures and present the quantitative bound at the end.
\cref{theorem:convergence_stable_quantitative} is the quantitative counterpart of \cref{theorem:convergence_stable_quali} and provides bounds on the distance $d_{3}$ between the distribution of a Poisson functional and that of a Gaussian mixture.
We are not able to obtain quantitative estimates for the convergence to a Poisson mixture.
\cref{theorem:stable_convergence_wasserstein} is an improvement of our bound from the $d_{3}$ distance to the $d_{1}$ distance, when $(F_{n})$ is a sequence of univariate random variables.
An extended comparison of those results with the existing literature is carried out in \cref{section:comparison}.
All the proofs are given in \cref{section:proofs}.

\subsection{Main qualitative results}%
\label{section:qualitative_results}

\subsubsection{Convergence to a Gaussian mixture}
Recall that we study asymptotic for (possibly multivariate) random variables of the form $F_{n} = \delta u_{n}$.
In this setting, let us state the multivariate equivalent of \cref{condition:R3_0}:
\begin{equation*}
  \dE \int {|u_{n}(z)|} {|D_{z}^{+} F_{n}|}^{2} \nu(\dd z) \tto{} 0. \tag{R$_{3}$} \label{condition:R3}
\end{equation*}
We also consider
\begin{equation*}
  \dE \int {|D_{z}^{+}F_{n}|}^{4} \nu(\dd z) \tto{} 0. \tag{R$_{4}$} \label{condition:R4}
\end{equation*}
Remark that provided $(u_{n})$ is bounded in $\CL^{2}\left(\dP \otimes \nu\right)$, by the Cauchy-Schwarz inequality \cref{condition:R4} implies \cref{condition:R3}.
Several works about normal approximation of Poisson functionals (for instance, \cite{PeccatiSoleTaqquUtzet,LachiezeReyPeccatiContractions,LachiezeReyPeccatiMarkedProcess,ReitznerSchulteCLTUStatistics}) also consider conditions such as \cref{condition:R3,condition:R4}.
The random variable $u_{n} = - DL^{-1}F_{n}$ is always a solution of the equation $\delta u_{n} = F_{n}$ (other choices are possible).
Following \cite[Theorem 3.1]{PeccatiSoleTaqquUtzet} or \cite[Theorem 4.1]{DoeblerPeccatiFMT}, let us consider
\begin{equation}\label{condition:PSTU}
  -\nu(DL^{-1}F_{n} DF_{n}) = {[u_{n}, DF_{n}]}_{\nu} \tto{\CL^{1}(\dP)} \sigma^{2};
\end{equation}
or
\begin{equation}\label{condition:DP}
  -\Gamma(L^{-1}F_{n}, F_{n}) = {[u_{n}, DF_{n}]}_{\Gamma} \tto{\CL^{1}(\dP)} \sigma^{2}.
\end{equation}
Then, we have that \cref{condition:R3} with either \cref{condition:PSTU} or \cref{condition:DP} imply that $F_{n} \tto{law} \lN(0,\sigma^{2})$.
In our setting of random variance it is thus very natural to consider one of the following conditions:
  \begin{equation*}
    {[u_{n}, D F_{n}]}_{\nu} \tto{\CL^{1}(\dP)} S S^{T} \tag{S$_{\nu}$}\label{condition:Snu};
  \end{equation*}
  or
\begin{equation*}
  {[u_{n}, D F_{n}]}_{\Gamma} \tto{\CL^{1}(\dP)} S S^{T} \tag{S$_{\Gamma}$}\label{condition:S};
  \end{equation*}
  for some $S \in \CL^{2}(\dP)$.
  When dealing with stable convergence, either of the following conditions would guarantee asymptotic independence:
\begin{equation*}
  {[u_{n}, h]}_{\nu} \tto{\CL^{1}(\dP)} 0, \quad \forall h \in \CL^{2}(\nu) \tag{W$_{\nu}$}\label{condition:Wnu};
\end{equation*}
or
\begin{equation*}
  {[u_{n}, DG]}_{\Gamma} \tto{\CL^{1}(\dP)} 0, \quad \forall G \in \CG \tag{W$_{\Gamma}$}\label{condition:W}.
\end{equation*}
Our first statement regarding stable limit theorems on the Poisson space is the following qualitative generalization of the results of \cite{PeccatiSoleTaqquUtzet,DoeblerPeccatiFMT} to consider Gaussian mixtures in the limit.
\begin{theorem}\label{theorem:convergence_stable_quali}
  Let $\{F_{n} = (F_{n}^{(1)}, \dots, F_{n}^{(d)});\; n \in \dN\} \subset \dom D$.
  Assume that, for all $n \in \dN$, there exists $u_{n} \in \dom \delta$ such that $F_{n} = \delta u_{n}$ and that \cref{condition:R3} holds.
  Let $S = (S_{1}, \dots, S_{d}) \in \CL^{2}(\dP)$.
  Assume that either \cref{condition:Wnu,condition:Snu} holds; or \cref{condition:W,condition:S} holds.
  Then $F_{n} \tto{stably} \lN(0,S^{2})$.
\end{theorem}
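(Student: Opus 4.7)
I use the characterisation~\cref{condition:stable_convergence_G} of stable convergence in \cref{proposition:stable_convergence_G}. Introduce $F_\infty := SN$, where $N \sim \lN(0,I_d)$ is independent of $\eta$, realised on the enlarged probability space $\Omega$; the conditional Fourier characterisation~\cref{equation:conditional_Fourier_Gaussian_mixture} shows it suffices to prove that $\dE[\phi(F_n) G] \to \dE[\phi(F_\infty) G]$ for every $G \in \CG$ and every $\phi \in \CC_b^3(\dR^d)$ with bounded derivatives up to order three, since applying the result to $\phi = e^{i\langle\lambda,\cdot\rangle}$ (treating real and imaginary parts separately) yields~\cref{condition:stable_convergence_G}.

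The strategy is a smart-path interpolation. Define $Y(t) := \sqrt{1-t}\,F_n + \sqrt{t}\,F_\infty$ and $g_n(t) := \dE[\phi(Y(t))G]$ for $t \in [0,1]$. Differentiating yields
\begin{equation*}
g_n'(t) = -\frac{1}{2\sqrt{1-t}}\dE\bigl[G\langle\nabla\phi(Y(t)), F_n\rangle_{\ell^2}\bigr] + \frac{1}{2\sqrt{t}}\dE\bigl[G\langle\nabla\phi(Y(t)), F_\infty\rangle_{\ell^2}\bigr],
\end{equation*}
and I treat the two terms by different integration-by-parts formulas. The $F_\infty$-term is handled by \emph{conditional} Gaussian integration by parts (given $\eta$, $F_\infty \sim \lN(0,SS^T)$), producing $\tfrac{1}{2}\dE[G\langle SS^T, \nabla^2\phi(Y(t))\rangle_{\ell^2}]$. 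The $F_n$-term is treated via $F_n = \delta u_n$ and \cref{lemma:integration_by_parts_energy} applied to the shifted test function $x \mapsto \phi(\sqrt{1-t}\,x + \sqrt{t}\,F_\infty)$; this is legitimate in the extended Malliavin framework introduced at the end of \cref{section:stochastic_analysis}, as $F_\infty$ is independent of $\eta$. Combining and rearranging leads, schematically, to
\begin{equation*}
g_n'(t) = \tfrac{1}{2}\dE\left[G\left\langle SS^T - [u_n,DF_n]_\Gamma,\, \nabla^2\phi(Y(t))\right\rangle_{\ell^2}\right] + \text{(term in $[u_n, DG]_\Gamma$)} + \text{(Taylor remainder)}.
\end{equation*}

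Under the hypotheses~\cref{condition:S} and~\cref{condition:W}, each summand in $g_n'(t)$ tends to zero as $n \to \infty$ and is integrable on $[0,1]$: the main summand is controlled uniformly in $t$ by $\dE|SS^T - [u_n,DF_n]_\Gamma| \to 0$ via~\cref{condition:S}; the correction carries a $(1-t)^{-1/2}$ prefactor (coming from the differentiation of $Y(t)$) but is pointwise $o_n(1)$ by~\cref{condition:W}, and $\int_0^1 (1-t)^{-1/2}\,\mathrm{d}t < \infty$; and the Taylor remainder, produced by the third-derivative terms $\hat R, \check R$ of \cref{lemma:chain_rule}, is bounded by $(1-t)^{1/2}\cdot\dE\int {|u_n(z)|}_{\ell^2}\,{|D_z^+ F_n|}_{\ell^2}^{2}\nu(\mathrm{d}z) \to 0$ by~\cref{condition:R3}. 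Dominated convergence then yields $g_n(1) - g_n(0) = \int_0^1 g_n'(t)\,\mathrm{d}t \to 0$. Under the alternative hypotheses~\cref{condition:Snu} and~\cref{condition:Wnu}, the same scheme works after first rewriting $G F_n = \delta(G u_n) + [DG, u_n]_\eta$ (via the product-divergence identity proved earlier in \cref{section:stochastic_analysis}) and applying \cref{lemma:integration_by_parts_D} to $\delta(Gu_n)$; the residual $\eta$-bracket is tamed using that, for $G = e^{-\eta(\psi)} \in \CG$, $DG(z) = G(e^{-\psi(z)} - 1)$ has a deterministic multiplicative factor, so the Mecke formula reduces $[DG, u_n]_\eta$ to an expression involving $[u_n, e^{-\psi}-1]_\nu$, to which~\cref{condition:Wnu} applies.

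The principal obstacles are (i) carrying out \cref{lemma:integration_by_parts_energy} for a test function whose coefficients involve the external randomness $F_\infty$, resolved by the extended-Malliavin-operators setup; (ii) the endpoint singularities of the smart path, which are resolved because the $(1-t)^{\pm 1/2}$-scaling of the remainders combines with the $(1-t)^{-1/2}$ interpolation weight to produce integrable contributions; and (iii) the necessity of covering both hypothesis sets, which forces switching between the $\Gamma$- and $\nu$-forms of the integration-by-parts formulas.
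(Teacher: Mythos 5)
Your plan is the \emph{smart-path interpolation} that the paper uses only for its quantitative statements (\cref{proposition:quantitative_bound_energy_D,proposition:quantitative_bound_energy,theorem:convergence_stable_quantitative,theorem:convergence_stable}), and it cannot prove \cref{theorem:convergence_stable_quali} as stated. The root of the problem is your claim that $F_\infty = SN$ is independent of $\eta$: it is not. Only $N$ is independent of $\eta$; the matrix $S$ is $\gW$-measurable (in \cref{theorem:convergence_stable_quali}, $S \in \CL^{2}(\dP) = \CL^{2}(\Omega,\gW,\dP)$, and indeed \cref{condition:S} forces $SS^{T}$ to be a limit of $\gW$-measurable quantities). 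Consequently, along the path $Y(t) = \sqrt{1-t}\,F_n + \sqrt{t}\,SN$ the add operator hits $S$:
\begin{equation}
  D_{z}^{+}Y(t) = \sqrt{1-t}\,D_{z}^{+}F_{n} + \sqrt{t}\,(D_{z}^{+}S)N,
\end{equation}
so the integration by parts of \cref{lemma:integration_by_parts_energy} produces terms that your schematic formula for $g_n'(t)$ omits entirely: a term ${[u_{n},(DS)S^{T}]}_{\Gamma}$ (after the Gaussian integration by parts), and a contribution $\dE \int {|u_{n}(z)|}_{\ell^{2}} {|D_{z}^{+}S|}_{\ell^{2}}^{2}\, \nu(\dd z)$ inside the Taylor remainder. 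These are exactly the second and third terms in \cref{equation:quantitative_bound_energy}. To even write them you need $S \in \cov$ (i.e.\ $S \in \dom D$ with $SS^{T} \in \dom D$), and to make them vanish you need the extra hypotheses \cref{condition:RS,condition:S3,condition:Sh} of \cref{theorem:convergence_stable}. None of this is available in \cref{theorem:convergence_stable_quali}, where $S$ is merely square-integrable and possibly not Malliavin differentiable; the paper's comparison section points out precisely that \cref{theorem:convergence_stable} needs \cref{condition:RS} because it relies on the quantitative bounds, while \cref{theorem:convergence_stable_quali} does not.

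This is why the paper proves \cref{theorem:convergence_stable_quali} by the \emph{characteristic-function method} instead: boundedness of $(F_n)$ in $\CL^{2}(\dP)$ from \cref{condition:S}, tightness and extraction of a subsequence $(F_n,G) \to (F_\infty,G)$ in law, then two computations of $\nabla\psi_n(\lambda) = i\,\dE F_n G\,\e^{i\langle\lambda,F_n\rangle_{\ell^{2}}}$ --- one by uniform integrability, one by \cref{lemma:integration_by_parts_energy} applied to $F_n$ alone (so $S$ is never differentiated) --- which yields the differential equation $\frac{\dd}{\dd\lambda}\dE[\e^{i\lambda F_\infty}\,|\,\eta] = -\lambda SS^{T}\, \dE[\e^{i\lambda F_\infty}\,|\,\eta]$, whose unique solution is \cref{equation:conditional_Fourier_Gaussian_mixture}. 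A secondary gap: your treatment of the \cref{condition:Wnu,condition:Snu} case via $GF_n = \delta(Gu_n) + {[DG,u_n]}_{\eta}$ requires controlling $\dE\,|\langle\nabla\phi(Y(t)), {[DG,u_{n}]}_{\eta}\rangle_{\ell^{2}}|$; bounding this through the Mecke formula leads to $\dE\, {[\,|u_{n}|, |\e^{-\psi}-1|\,]}_{\nu}$, with an absolute value on $u_n$ that \cref{condition:Wnu} does not control (it only gives ${[u_{n},h]}_{\nu} \to 0$ in $\CL^{1}(\dP)$, with signs). The paper avoids this by testing against additive shifts $I_{1}(h)$ and invoking \cref{condition:stable_convergence_I} of \cref{proposition:stable_convergence_G}, rather than multiplying by $G \in \CG$.
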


\begin{remark}
  The condition \cref{condition:S} is a priori more involved than \cref{condition:Snu}: indeed integrating with respect to $\eta$ adds some randomness to the object.
  However, in \cref{section:fourth_moment} we need the result involving ${[\cdot,\cdot]}_{\Gamma}$ in order to obtain a stable version of the fourth moment theorem of \cite{DoeblerPeccatiFMT}.
    On the other hand, we use conditions of type \cref{condition:Snu,condition:Wnu} to derive \cref{theorem:order_2_gaussian,theorem:order_2_poisson}.
\end{remark}

\subsubsection{Convergence to a Poisson mixture}
Here we only consider univariate random variables.
Convergence in law of Poisson functionals to a Poisson distribution represents another archetypal limit theorem.
In the setting of the Malliavin-Stein method, \cite{PeccatiChenStein} proves that the two conditions:
\begin{equation*}
  -\nu(D^{+}L^{-1}F_{n} D^{+}F_{n}) \tto{} m,
\end{equation*}
and
\begin{equation*}
  \dE \int |D_{z}^{+}L^{-1}F_{n} D_{z}^{+}F_{n}(D_{z}^{+}F_{n}-1)| \nu(\dd z) \tto{} 0,
\end{equation*}
imply that $F_{n} \tto{law} \lP\lo(m)$\footnote{\cite{PeccatiChenStein} works with non-centered random variables but the result is equivalent.}.
It is thus very natural to replace \cref{condition:R3} by the following asymptotic conditions for $F_{n} = \delta u_{n}$ (here we only considered scalar-valued random variables):
\begin{equation*}
  \dE \int |u_{n}(z) D_{z}^{+}F_{n} (D_{z}^{+} F_{n} - 1)| \nu(\dd z) \tto{} 0. \tag{P$_{3}$} \label{condition:P3}
\end{equation*}
We also consider the Poisson version of \cref{condition:R4}:
\begin{equation*}
  \dE \int {|D_{z}^{+}F_{n}|}^{2} {|D_{z}^{+}F_{n} - 1|}^{2} \nu(\dd z) \tto{} 0. \tag{P$_{4}$} \label{condition:P4}
\end{equation*}
Again, provided $(u_{n})$ is bounded in $\CL^{2}(\nu \otimes \dP)$, we see that \cref{condition:P4} implies \cref{condition:P3}.
With this notation, we have the following qualitative result for convergence to a Poisson mixture.
\begin{theorem}\label{theorem:convergence_stable_quali_P}
  Let $(F_{n}) \subset \dom D$.
  Let $M \in \CL^{1}(\dP)$ with $M \geq 0$.
  Assume that, for all $n \in \dN$, there exists $u_{n} \in \dom \delta$ such that $F_{n} = \delta u_{n}$ and that \cref{condition:P3,condition:Wnu} hold, and moreover assume that
  \begin{equation*}
    {[u_{n}, DF_{n}]}_{\nu} = \langle u_{n}, D F_{n} \rangle_{\CL^{2}(\nu)} \tto{\CL^{1}(\dP)} M \tag{M$_{\nu}$}\label{condition:M}.
  \end{equation*}
  Then $F_{n} \tto{stably} \lP\lo(M)$.
\end{theorem}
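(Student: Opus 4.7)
By \cref{proposition:stable_convergence_G} applied with the sub-algebra $\CG$ from \cref{proposition:density_G}, it suffices to show that, for every $G \in \CG$ and every $\lambda \in \dR$,
\begin{equation*}
  \phi_{n}(\lambda) := \dE G \e^{i \lambda F_{n}} \tto{} \phi_{\infty}(\lambda) := \dE G \exp\bigl(M(\e^{i\lambda} - i\lambda - 1)\bigr),
\end{equation*}
the right-hand side being the expectation of $G$ against the conditional Fourier transform of $\lP\lo(M)$ by \cref{equation:conditional_Fourier_Poisson_mixture}. The plan is to derive an approximate ODE in $\lambda$ for $\phi_{n}$ and then identify its limit via subsequential compactness.

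I first record tightness. By the Malliavin duality \cref{equation:integration_by_parts_delta}, $\dE F_{n}^{2} = \dE F_{n} \delta u_{n} = \dE {[u_{n}, DF_{n}]}_{\nu}$, which converges to $\dE M < \infty$ by \cref{condition:M}; hence $(F_{n})$ is bounded in $\CL^{2}(\dP)$, and, up to a subsequence, converges stably to some $\gO$-measurable $F_{\infty}'$. It then suffices to show that the conditional distribution of every such $F_{\infty}'$ given $\gW$ is $\lP\lo(M)$.

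I differentiate, $\phi_{n}'(\lambda) = i \dE G F_{n} \e^{i\lambda F_{n}}$, substitute $F_{n} = \delta u_{n}$, and invoke \cref{equation:integration_by_parts_delta} with the pathwise expansion
\begin{equation*}
  D_{z}^{+}(G \e^{i\lambda F_{n}}) = D_{z}^{+}G \cdot \e^{i\lambda(F_{n} + D_{z}^{+}F_{n})} + G \e^{i \lambda F_{n}}\bigl(\e^{i\lambda D_{z}^{+} F_{n}} - 1\bigr).
\end{equation*}
Applying \cref{lemma:chain_rule_difference} to $\phi(y) = \e^{i\lambda y}$ gives the key Taylor decomposition
\begin{equation*}
  \e^{i\lambda D_{z}^{+}F_{n}} - 1 = (\e^{i\lambda} - 1) D_{z}^{+}F_{n} + R_{n}(z),\quad |R_{n}(z)| \leq \tfrac{\lambda^{2}}{2} \bigl|D_{z}^{+}F_{n}(D_{z}^{+}F_{n} - 1)\bigr|.
\end{equation*}
Substituting and grouping, I obtain
\begin{equation*}
  \phi_{n}'(\lambda) = i(\e^{i\lambda} - 1) \dE G \e^{i\lambda F_{n}} {[u_{n}, DF_{n}]}_{\nu} + r_{n}(\lambda),
\end{equation*}
where the remainder $r_{n}(\lambda)$ comprises: (a) the $R_{n}$-contribution, bounded by a constant times $\dE \int |u_{n}(z)||D_{z}^{+}F_{n}(D_{z}^{+}F_{n}-1)| \nu(\dd z)$ and vanishing by \cref{condition:P3}; and (b) terms involving $D_{z}^{+}G$. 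For $G = \e^{-\eta(\psi)}$, the identity $D_{z}^{+}G = G(\e^{-\psi(z)}-1)$ with $h := \e^{-\psi}-1 \in \CL^{2}(\nu)$ reduces (b), modulo further $R_{n}$-type remainders, to multiples of $\dE G \e^{i\lambda F_{n}} {[u_{n}, h]}_{\nu}$, which vanish by \cref{condition:Wnu}.

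To close the argument, I use that $|\phi_{n}'(\lambda)| \leq \|G\|_{\infty}\sqrt{\dE F_{n}^{2}}$ and $|\phi_{n}''(\lambda)| \leq \|G\|_{\infty}\dE F_{n}^{2}$ are uniformly bounded, ensuring equicontinuity of $(\phi_{n})$ and $(\phi_{n}')$ on compacts. Along the stably convergent subsequence, using \cref{condition:M} in $\CL^{1}$ and a truncation argument for $GM \in \CL^{1}(\gW)$ based on \cref{proposition:stable_convergence_G}, I get $\phi_{n}(\lambda) \to \dE G \e^{i\lambda F_{\infty}'}$ and $\phi_{n}'(\lambda) \to i(\e^{i\lambda} - 1) \dE G M \e^{i\lambda F_{\infty}'}$. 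Setting $g(\lambda) := \dE[\e^{i\lambda F_{\infty}'} | \gW]$ and letting $G$ range over $\CG$, density and truncation to bounded $\gW$-measurable test variables (\cref{proposition:density_G}) produce the almost sure integral equation $g(\lambda) = 1 + \int_{0}^{\lambda} i(\e^{i\mu}-1) M g(\mu) \dd\mu$, whose unique solution $g(\lambda) = \exp(M(\e^{i\lambda}-i\lambda-1))$ matches \cref{equation:conditional_Fourier_Poisson_mixture}; thus $F_{\infty}' \sim \lP\lo(M)$ conditionally on $\gW$. The main obstacle I foresee is the clean treatment of the $D_{z}^{+}G \cdot D_{z}^{+}F_{n}$ cross-terms in (b): the natural split $D_{z}^{+}F_{n} = 1 + (D_{z}^{+}F_{n} - 1)$ sends the first piece to $[u_{n}, h]_{\nu}$ (controlled by \cref{condition:Wnu}), but leaves a residual coupling $|D_{z}^{+}F_{n} - 1|$ to $|u_{n}(z)|$ without the factor $|D_{z}^{+}F_{n}|$ needed by \cref{condition:P3}; its resolution should come from exploiting that $D_{z}^{+}G$ is pathwise bounded and supported on the finite-$\nu$-measure set $\{\psi > 0\}$, together with a region split $\{|D_{z}^{+}F_{n}| \geq 1/2\}$ and its complement, combined with Cauchy-Schwarz estimates against the $\CL^{2}$-boundedness flowing from \cref{condition:M}.
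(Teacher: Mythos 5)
Your overall skeleton parallels the paper's proof: tightness of $(F_{n})$ from \cref{condition:M} via the duality \cref{equation:integration_by_parts_delta}, subsequence extraction, differentiation of a characteristic function in $\lambda$, and identification of the conditional Fourier transform through a first-order ODE whose unique solution is \cref{equation:conditional_Fourier_Poisson_mixture}. The decisive difference is the choice of test object: you test multiplicatively against $G \in \CG$ (criterion \cref{condition:stable_convergence_G} of \cref{proposition:stable_convergence_G}), whereas the paper perturbs additively, working with $\psi_{n}(\lambda) = \dE \e^{i\lambda(F_{n}+I_{1}(h))}$ for $h \in \CL^{2}(\nu)$ and criterion \cref{condition:stable_convergence_I}. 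This is not cosmetic: it is exactly what creates the obstacle you flag at the end, and your proposed resolution of that obstacle does not work, so the argument as written has a genuine gap. After splitting $D_{z}^{+}(G\e^{i\lambda F_{n}})$ and applying \cref{lemma:chain_rule_difference}, the term (b) reduces (modulo a remainder killed by \cref{condition:P3}) to the first-order cross term
\begin{equation*}
  \dE\, G \e^{i\lambda F_{n}} \int u_{n}(z)\, h(z)\, D_{z}^{+}F_{n}\, \nu(\dd z),
\end{equation*}
and no hypothesis among \cref{condition:P3,condition:Wnu,condition:M} controls it. Your split $D_{z}^{+}F_{n} = 1 + (D_{z}^{+}F_{n}-1)$ leaves $\dE\, G\e^{i\lambda F_{n}}\int u_{n}h\,(D_{z}^{+}F_{n}-1)\,\dd\nu$; on $\{|D_{z}^{+}F_{n}|\geq 1/2\}$ this is indeed absorbed by \cref{condition:P3} (since $1 \leq 2|D_{z}^{+}F_{n}|$ there), but on $\{|D_{z}^{+}F_{n}|<1/2\}$ --- in particular on $\{D_{z}^{+}F_{n}=0\}$, where \cref{condition:P3} carries no information --- you are left with $\dE\bigl|\int u_{n}h\,1_{\{|D_{z}^{+}F_{n}|<1/2\}}\dd\nu\bigr|$, which involves a \emph{random} region and, in your estimate, an \emph{absolute} integral of $u_{n}h$. \cref{condition:Wnu} is a cancellation statement about the signed, deterministic pairing $\int u_{n}h\,\dd\nu$ and says nothing about either.

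Your suggested patch fails for two concrete reasons. First, \cref{condition:M} is $\CL^{1}(\dP)$-convergence of the scalar pairing $\langle u_{n}, DF_{n}\rangle_{\CL^{2}(\nu)}$; it yields $\CL^{2}(\dP)$-boundedness of $F_{n}$ but no bound whatsoever on $\dE\nu(u_{n}^{2})$ or $\dE\nu(|u_{n}|)$ (recall that $\delta$ has a large kernel, so $u_{n}$ is far from determined by $F_{n}$). Second, even granting such a bound, Cauchy--Schwarz over the finite-measure set $\{\psi>0\}$ produces a quantity that is \emph{bounded}, not vanishing. The paper's additive choice is precisely what sidesteps all of this: with $\Phi_{n} = F_{n}+I_{1}(h)$, the discrete chain rule \cref{lemma:chain_rule_difference} applies with $D_{z}^{+}\Phi_{n} = D_{z}^{+}F_{n}+h(z)$ and produces the factor $\e^{i\lambda(\Phi_{n}+1)}-\e^{i\lambda\Phi_{n}} = \e^{i\lambda\Phi_{n}}(\e^{i\lambda}-1)$, which has no $z$-dependence; hence the first-order term is exactly $(\e^{i\lambda}-1)\,\dE\,\e^{i\lambda\Phi_{n}}\left({[u_{n},DF_{n}]}_{\nu}+{[u_{n},h]}_{\nu}\right)$, and both brackets are handled directly by \cref{condition:M,condition:Wnu}, with the $h$--$DF_{n}$ coupling relegated to the second-order remainder. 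To repair your route you would either need an additional hypothesis of the type ${[u_{n}h, DF_{n}]}_{\nu}\tto{\CL^{1}(\dP)}0$ for the relevant $h$, or simply switch to the paper's test family $\CI = \{I_{1}(h),\ h \in \CL^{2}(\nu)\}$.
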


\begin{remark}\label{remark:MS}
  \cref{condition:M} is formally equivalent to \cref{condition:Snu} (we can always write $S^{2} = M$).
  However, it is important to note that our theorem cannot be true if we replace the scalar product by the energy bracket in \cref{condition:M}, that is that we work with the condition:
  \begin{equation*}
    {[u_{n}, DF_{n}]}_{\Gamma} \tto{\CL^{1}(\dP)} M \tag{M$_{\Gamma}$}\label{condition:M_gamma}.
  \end{equation*}
  Indeed take $F = \eta(A) - \nu(A)$, with $A \in \gZ$, $\nu(A) <\infty$.
  We can write $F = \delta 1_{A}$, and $DF = 1_{A}$, hence \cref{condition:P3} is satisfies, since we have
  \begin{equation*}
    \int_{A} |1_{A} - 1| \dd \nu = 0.
  \end{equation*}
  On the other hand, we have that
  \begin{equation*}
    {[1_{A}, 1_{A}]}_{\Gamma} = \frac{1}{2}(\nu(A) + \eta(A)) = M.
  \end{equation*}
  Let $F \sim \lP\lo(M)$, then
  \begin{equation*}
  \begin{split}
    \dE \e^{i\lambda F} &= \dE \exp(M(\e^{i\lambda} - i\lambda -1)) \\
                        &= \exp\left(\frac{1}{2}\nu(A)(\e^{i\lambda} - i\lambda - 1)\right) \exp\left(\frac{\nu(A)}{2}\left( \exp(\e^{i \lambda} -i \lambda - 1) - 1\right)\right).
  \end{split}
  \end{equation*}
  Hence, we see that the law of $F$ is not the one of $\eta(A) - \nu(A)$.
  Remark that \cref{condition:S,condition:M_gamma} are also formally equivalent.
  At a more structural level, \cite{DoeblerPeccatiProduct} proves that if a sequence $(F_{n})$ of Poisson stochastic integrals satisfies a deterministic reinforcement of \cref{condition:S}, that is:
  \begin{equation*}
    {[-DL^{-1}F_{n}, DF_{n}]}_{\Gamma} = -\Gamma(L^{-1}F_{n}, F_{n}) \tto{\CL^{2}(\dP)} \sigma^{2},
  \end{equation*}
   then, without further assumptions, the sequence converges in law to a Gaussian.
   Since the condition of \cite{DoeblerPeccatiProduct} implies \cref{condition:M_gamma} for the particular choice $u_{n} = - DL^{-1}F_{n}$ when $F_{n}$ is a stochastic integral, we see that \cref{condition:M_gamma} could not enforce convergence to a Poisson mixture.
\end{remark}

\subsection{Main quantitative results for Gaussian mixtures}\label{section:quantitative_results}

\subsubsection{General results in any dimension}
In this section, we obtain quantitative Malliavin-Stein bounds between the law of a Poisson functional and that of a Gaussian mixture.
As for \cref{theorem:convergence_stable_quali} we can either work with ${[\cdot,\cdot]}_{\nu}$ or with ${[\cdot,\cdot]}_{\Gamma}$ yielding to different bounds.
Results involving ${[\cdot,\cdot]}_{\nu}$ are a priori easier to handle in applications.
However, we state the two bounds for completeness.
For short, for $\phi \in \CC^{k}(\dR^{d})$, let us write $\Phi_{k} =  \sup_{x \in \dR^{d}} |\nabla^{k} \phi|(x)$, and $S \in \cov$ whenever $S \in \dom D$ with $S S^{T} \in \dom D$.
Also recall that we write ${[\cdot,\cdot]}_{\widetilde{\beta}}$ for the symmetrization of the random matrix ${[\cdot,\cdot]}_{\beta}$, $\beta \in \{\nu, \Gamma\}$, defined in \cref{section:stochastic_analysis}.
We are now in position to state our bound in the $d_{3}$ distance of a Poisson functional to a Gaussian mixture.
\begin{theorem}\label{theorem:convergence_stable_quantitative}
  Let $\beta \in \{\nu, \Gamma\}$.
  Let $F \in \dom D$, and $S \in \cov$.
  Then,
  \begin{equation*}
    \begin{split}
      d_{3}(F, \lN(0,S^{2})) & \leq \frac{1}{4} \dE {\left| {{[u,DF]}}_{\widetilde{\beta}} - S S^{T}\right|} \\
                             &+ \frac{1}{3} \dE {\left|{[u, (DS)S^{T}]}_{\beta}\right|} \\
                             &+ \frac{1}{6} \dE \int {|u(z)|}( {|D_{z}F|}^{2} + {|DS|}^{2}) \nu(\dd z).
    \end{split}
  \end{equation*}
\end{theorem}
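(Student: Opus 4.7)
The plan is to obtain this theorem as a direct corollary of the two quantitative bounds \cref{proposition:quantitative_bound_energy_D} and \cref{proposition:quantitative_bound_energy} proved earlier (via the Talagrand smart-path interpolation). The only work left is a specialization of the test class in each case and an application of the variational definition of $d_{3}$.

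First, recall that for $\phi \in \CF_{3}$ we have $\Phi_{2} = {|\nabla^{2}\phi|}_{\ell^{2},\infty} \leq 1$ and $\Phi_{3} = {|\nabla^{3}\phi|}_{\ell^{2},\infty} \leq 1$, while no bound on $\Phi_{1}$ is assumed. This is why the choices below must kill the first-derivative term in \cref{proposition:quantitative_bound_energy}. Also, the hypothesis $F \in \dom D$ is used implicitly in the theorem together with an (unnamed) choice of $u \in \dom \delta$ with $F = \delta u$; the bound is then phrased in terms of this $u$.

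For the case $\beta = \nu$, I would apply \cref{proposition:quantitative_bound_energy_D} with $h = 0$, so that $I = I_{1}(0) = 0$. The resulting inequality reads
\begin{equation*}
  |\dE \phi(F) - \dE \phi(SN)| \leq \tfrac{1}{4} \Phi_{2} \dE{\left|\widetilde{{[u,DF]}}_{\nu} - SS^{T}\right|}_{\ell^{2}} + \tfrac{1}{3} \Phi_{3} \dE{|{[u,(DS)S^{T}]}_{\nu}|}_{\ell^{2}} + \tfrac{1}{6} \Phi_{3} \dE\!\int\! {|u(z)|}_{\ell^{2}}\bigl({|D_{z}^{+}F|}_{\ell^{2}}^{2}+{|DS|}_{\ell^{2}}^{2}\bigr)\nu(\dd z).
\end{equation*}
Bounding $\Phi_{2},\Phi_{3} \leq 1$ and taking the supremum over $\phi \in \CF_{3}$ yields the bound in the statement for $\beta = \nu$.

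For the case $\beta = \Gamma$, I would apply \cref{proposition:quantitative_bound_energy} with $G = 1$, which belongs to $\CG$ via the admissible test function $\psi \equiv 0$ (since $\nu(\psi > 0) = \nu(\emptyset) = 0 < \infty$, and then $\e^{-\eta(\psi)} = 1$). With this choice ${|G|}_{\infty} = 1$ and $DG = 0$, so ${[u,DG]}_{\Gamma} = 0$ and the $\Phi_{1}$ term disappears; the same supremum argument over $\CF_{3}$ then gives the $\beta = \Gamma$ bound. There is no substantive obstacle here: the content of the theorem lies entirely in \cref{proposition:quantitative_bound_energy_D,proposition:quantitative_bound_energy}, and the only care needed is to make sure that the specialization of the parameters ($h=0$, respectively $G=1$) is admissible so as to remove the extra terms that are not controlled by the $\CF_{3}$ seminorms.
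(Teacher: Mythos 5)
Your proposal is correct and matches the paper's own proof, which likewise derives the theorem by applying \cref{proposition:quantitative_bound_energy_D} with $h=0$ (for $\beta=\nu$) and \cref{proposition:quantitative_bound_energy} with $G=1$ (for $\beta=\Gamma$), specialized to $\phi \in \CF_{3}$. Your added checks (that $G=1$ lies in $\CG$, and that these choices eliminate the $\Phi_{1}$ term uncontrolled by the $\CF_{3}$ seminorms) are sound elaborations of the same argument.
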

In \cref{theorem:convergence_stable_quali}, \cref{condition:Snu} enforces that the asymptotic covariance $S$ is measurable with respect to $\eta$.
Thanks to \cref{proposition:quantitative_bound_energy_D}, when $S_{n}^{2} = {{[u_{n}, DF_{n}]}}_{\widetilde{\nu}}$ is non-negative, we can deduce sufficient conditions for the stable convergence of a Poisson functional that involves stable convergence of $S_{n}$ to some $S$ (not necessarily measurable with respect to $\eta$).
This weaker form of convergence can allow, for instance, $S$ to be independent of $\eta$.
\begin{theorem}\label{theorem:convergence_stable}
  Let ${(F_{n})}_{n \in \dN} \subset \dom D$, and $S \in \CL^{2}(\Omega)$ (not necessarily measurable with respect to $\eta$).
  Let ${(u_{n})}_{n \in \dN} \subset \dom \delta$ such that $F_{n} = \delta u_{n}$ for all $n \in \dN$, and \cref{condition:Wnu,condition:R3} holds.
    Assume, moreover, that for $n$ sufficiently big ${{[u_{n}, DF_{n}]}}_{\widetilde{\nu}} = C_{n} + \epsilon_{n}$, where $C_{n} = S_{n} S_{n}^{T}$ is a symmetric non-negative random matrix, and:
  \begin{align}
    & C_{n} \tto{stably} S S^{T};\label{condition:Cst}\tag{C.st} \\
    & \epsilon_{n} \tto{\CL^{1}(\dP)} 0;\label{condition:epsilon}\tag{$\epsilon$} \\
    & {[u_{n}, (DS_{n})S_{n}]}_{\nu} \tto{\CL^{1}(\dP)} 0;\label{condition:RS}\tag{RS} \\
    & \int {|h(z)|} {|D_{z}^{+}F_{n}|}^{2} \nu(\dd z) \tto{\CL^{1}(\dP)} 0; \label{condition:Rh} \tag{Rh} \\
    & \int {|u_{n}(z)|} {|D_{z}^{+}S_{n}|}^{2} \nu(\dd z) \tto{\CL^{1}(\dP)} 0; \label{condition:S3} \tag{S$_{3}$} \\
    & \int {|h(z)|} {|D_{z}^{+}S_{n}|}^{2} \nu(\dd z) \tto{\CL^{1}(\dP)} 0. \label{condition:Sh} \tag{Sh}
  \end{align}
  Then $F_{n} \tto{stably} \lN(0,S^{2})$.
\end{theorem}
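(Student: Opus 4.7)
The strategy is to leverage the quantitative bound of \cref{proposition:quantitative_bound_energy_D}, which requires the target covariance to be $\gW$-measurable, by inserting the $\gW$-measurable random matrix $S_{n}$ between $F_{n}$ and the possibly non-$\gW$-measurable limit $S$; the remaining gap between $S_{n}N$ and $SN$ is then bridged by the stable convergence of $C_{n} = S_{n}S_{n}^{T}$ towards $SS^{T}$. More precisely, by \cref{condition:stable_convergence_I} of \cref{proposition:stable_convergence_G} applied to the linear subspace $\CI = \{I_{1}(h) : h \in \CL^{2}(\nu)\}$ (which generates $\gW$), it suffices to show that, for every $h \in \CL^{2}(\nu)$ and every $\lambda \in \dR^{d}$, writing $I = I_{1}(h)$,
\begin{equation*}
  \dE \e^{i \langle \lambda, F_{n} + I \rangle_{\ell^{2}}} - \dE \e^{i \langle \lambda, SN + I \rangle_{\ell^{2}}} \tto{} 0.
\end{equation*}
I would decompose this difference as $\Delta_{n}^{(1)} + \Delta_{n}^{(2)}$, where $\Delta_{n}^{(1)}$ compares $F_{n}+I$ with $S_{n}N+I$, and $\Delta_{n}^{(2)}$ compares $S_{n}N+I$ with $SN+I$.

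For $\Delta_{n}^{(1)}$, I would apply \cref{proposition:quantitative_bound_energy_D} with $F = F_{n}$, $S = S_{n}$, and $\phi$ equal to the real and imaginary parts of $x \mapsto \e^{i\langle \lambda, x \rangle_{\ell^{2}}}$ (whose $\CC_{b}^{3}$-norms are polynomial in $|\lambda|$). The three terms of the bound then match the hypotheses cleanly: by the defining relation $\widetilde{[u_{n}, DF_{n}]}_{\nu} = S_{n}S_{n}^{T} + \epsilon_{n}$, the first term is a multiple of $\dE |\epsilon_{n}|_{\ell^{2}} \to 0$ by \cref{condition:epsilon}; the second term vanishes by \cref{condition:RS}; and the third term, split via the triangle inequality $|h+u_{n}|_{\ell^{2}} \leq |h|_{\ell^{2}} + |u_{n}|_{\ell^{2}}$, is a sum of four integrals that vanish by \cref{condition:R3,condition:S3,condition:Rh,condition:Sh} respectively.

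For $\Delta_{n}^{(2)}$, I would condition on $\gW$ in the first expectation and on $\sigma(S) \vee \gW$ in the second (using $N \perp (\gW, S)$), which yields
\begin{equation*}
  \Delta_{n}^{(2)} = \dE\left[\e^{i \langle \lambda, I \rangle_{\ell^{2}}} \left(\e^{- \lambda^{T} C_{n}\lambda/2} - \e^{-\lambda^{T} SS^{T}\lambda/2}\right)\right].
\end{equation*}
Since the map $c \mapsto \e^{-\lambda^{T}c\lambda/2}$ is bounded continuous on positive semidefinite matrices and $\e^{i\langle \lambda, I \rangle_{\ell^{2}}}$ is a bounded $\gW$-measurable random variable, the stable convergence \cref{condition:Cst} of $C_{n}$ to $SS^{T}$ delivers $\Delta_{n}^{(2)} \to 0$ directly from the definition of stable convergence (applied to the real and imaginary parts separately).

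The main delicate point I anticipate is the bookkeeping: checking that $S_{n} \in \cov$ at each $n$ so that \cref{proposition:quantitative_bound_energy_D} is applicable (which requires $S_{n} \in \dom D$ together with $S_{n}S_{n}^{T} \in \dom D$), and reconciling the term $(DS_{n})S_{n}^{T}$ appearing in the bound with the form $(DS_{n})S_{n}$ assumed in \cref{condition:RS}, a subtlety which is automatic in the scalar case $d=1$. Apart from these details, the argument is essentially the combination of the already-proved quantitative bound with bounded-continuous convergence against a stable limit.
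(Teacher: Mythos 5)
Your proposal is correct and follows essentially the same route as the paper: the paper likewise splits $\dE \phi(F_{n} + I_{1}(h)) - \dE \phi(SN + I_{1}(h))$ into a comparison of $F_{n}$ with $S_{n}N$ (handled by \cref{proposition:quantitative_bound_energy_D} under \cref{condition:epsilon,condition:RS,condition:R3,condition:Rh,condition:S3,condition:Sh}) and a comparison of $S_{n}N$ with $SN$ (handled by \cref{condition:Cst}), concluding via \cref{proposition:stable_convergence_G}. Your conditional-characteristic-function computation for $\Delta_{n}^{(2)}$ simply makes explicit the step the paper states as \enquote{$S_{n}N \tto{stably} SN$}, and the bookkeeping issues you flag ($S_{n} \in \cov$, and $(DS_{n})S_{n}^{T}$ versus $(DS_{n})S_{n}$, resolved by taking $S_{n} = C_{n}^{1/2}$ symmetric) are genuine minor gaps the paper itself leaves implicit.
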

\begin{remark}
  We can also consider 
  \begin{equation*}
    \int {|D_{z}^{+}S_{n}|}^{4} \nu(\dd z) \tto{\CL^{1}(\dP)} 0. \label{condition:S4} \tag{S$_{4}$}
  \end{equation*}
  Provided $(u_{n})$ is bounded in $\CL^{2}(\dP \otimes \nu)$ then \cref{condition:R4} implies \cref{condition:R3,condition:Rh}, and \cref{condition:S4} implies \cref{condition:S3,condition:Sh}.
\end{remark}

\begin{remark}
  We formulated our result with ${[\cdot,\cdot]}_{\nu}$; we could do the same for ${[\cdot,\cdot]}_{\Gamma}$.
  Details are left to the reader.
\end{remark}

\subsubsection{Bounds in the Wasserstein distance for the one-dimensional case}%
\label{section:improvement_1d}
The results of the previous section are stated in the rather abstract distance $d_{3}$.
When $F$ is univariate, one can use a regularization lemma in order to turn the estimates for the $d_{3}$ into estimates for the Wasserstein distance $d_{1}$.
In this section, all the random variables are implicitly univariate.
\begin{theorem}\label{theorem:stable_convergence_wasserstein}
  Let $F \in \dom D$ such that $F = \delta u$ for some $u \in \dom \delta$, and let $S \in \cov$.
  Consider
  \begin{align*}
    & \Delta_{1} = {\left(\frac{2}{\pi}\right)}^{\frac{1}{2}} (2 + \dE |S|) + \dE |F|; \\
    & \Delta_{2} = \frac{1}{4} {\left(\frac{2}{\pi}\right)}^{\frac{1}{2}}  \dE | \nu(u DF) - S^{2}| + 2^{\frac{1}{2}} \left(\frac{1}{3} \dE |S \nu(u DS)| + \frac{1}{6} \dE \nu\left(|u|\left({|DF|}^{2} + {|DS|}^{2}\right)\right)\right).
  \end{align*}
  Then, we have that
  \begin{equation*}
    d_{1}(F, \lN(0,S^{2})) \leq  \max\left(\left(2^{\frac{1}{3}} + 2^{-\frac{2}{3}}\right) \Delta_{1}^{\frac{2}{3}} \Delta_{2}^{\frac{1}{3}}, \Delta_{1} \Delta_{2}\right).
  \end{equation*}
\end{theorem}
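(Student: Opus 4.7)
The strategy is a clean two-step combination of the tools just developed. First I would reduce the problem to bounding $\dE \phi(F) - \dE \phi(SN)$ against a combination of $|\phi'|_\infty$, $|\phi''|_\infty$ and $|\phi'''|_\infty$, using the quantitative smoothed bound of \cref{proposition:quantitative_bound_energy_D}. Second, I would feed that tri-linear bound into the regularization \cref{lemma:regularization_MK}, specialized to $F' = SN$, in order to trade the $\CC^3$ test functions against Lipschitz ones and obtain a Monge--Kantorovich estimate.

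More concretely, I would apply \cref{proposition:quantitative_bound_energy_D} in dimension $d=1$ with $h \equiv 0$ (hence $I = 0$) to any $\phi \in \CC_b^3(\dR)$. In this scalar case, $\widetilde{[u,DF]}_\nu - SS^T$ reduces to $\nu(u\,DF) - S^2$, $[u,(DS)S^T]_\nu$ reduces to $S\,\nu(u\,DS)$, and the last integral becomes $\int |u(z)| ({(D_z^+ F)}^2 + {(D_z^+ S)}^2) \nu(\dd z)$. This gives, with $a = 0$, a bound of the form
\begin{equation*}
  \dE \phi(F) - \dE \phi(SN) \leq b\,|\phi''|_\infty + c\,|\phi'''|_\infty,
\end{equation*}
where $b = \tfrac{1}{4} \dE |\nu(u\,DF) - S^2|$ and $c = \tfrac{1}{3} \dE |S\,\nu(u\,DS)| + \tfrac{1}{6} \dE \nu(|u|({|DF|}^2 + {|DS|}^2))$.

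Now I would invoke \cref{lemma:regularization_MK} with $F' = SN$. Since $N \sim \lN(0,1)$ is independent of $\eta$ (and hence of $S$), one has $\dE |SN| = \dE |S|\,\dE|N| = \sqrt{2/\pi}\,\dE |S|$, so the quantity $\Delta_1 = 2\sqrt{2/\pi} + \dE |F| + \dE|SN|$ of the lemma matches exactly the expression $\sqrt{2/\pi}(2 + \dE|S|) + \dE|F|$ in the theorem. Likewise, $\Delta_2 = \sqrt{2/\pi}\,b + \sqrt{2}\,c$ reproduces the stated formula once one substitutes the values of $b$ and $c$ above. Taking the supremum over $\phi \in Lip(\dR)$ with $Lip(\phi) \leq 1$ and using Kantorovich duality yields the claimed bound on $d_1(F, \lN(0,S^2))$.

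There is no genuine obstacle: the proof is a bookkeeping exercise that chains \cref{proposition:quantitative_bound_energy_D} into \cref{lemma:regularization_MK}. The only mild subtlety is to note that $\lN(0,S^2)$ is the law of $SN$ with $N$ standard Gaussian \emph{independent of} $\eta$, so that the expectation $\dE |SN|$ factors as $\dE|S|\cdot\dE|N|$; once this is observed the constants $\Delta_1$ and $\Delta_2$ line up directly with those produced by the regularization lemma, and the theorem follows.
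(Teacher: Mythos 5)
Your proposal is correct and is exactly the paper's own argument: the paper proves this theorem by combining \cref{proposition:quantitative_bound_energy_D} (with $h=0$) and \cref{lemma:regularization_MK}, which is precisely your two-step reduction. Your additional bookkeeping (the scalar specializations of the brackets and the factorization $\dE|SN| = \sqrt{2/\pi}\,\dE|S|$) is a correct spelling-out of details the paper leaves implicit.
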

This theorem allows us to prove a quantitative version of \cref{theorem:convergence_stable} in the univariate case.
\begin{theorem}\label{theorem:stable_convergence_wasserstein_sequence}
  Let the assumptions and the notations of \cref{theorem:convergence_stable} prevail.
  Consider
  \begin{align*}
    & \Delta_{1,n} = {\left(\frac{2}{\pi}\right)}^{\frac{1}{2}} (2 + \dE |S_{n}|) + \dE |F_{n}|; \\
    & \Delta_{2,n} = \frac{1}{4} {\left(\frac{2}{\pi}\right)}^{\frac{1}{2}}  \dE |\epsilon_{n}| + 2^{\frac{1}{2}} \left(\frac{1}{3} \dE |S_{n} \nu(u_{n} DS_{n})| + \frac{1}{6} \dE \nu\left(|u_{n}|\left({|DF_{n}|}^{2} + {|DS_{n}|}^{2}\right)\right)\right).
  \end{align*}
  Then,
  \begin{equation*}
    d_{1}(F_{n}, \lN(0,S^{2})) \leq \max\left(\left(2^{\frac{1}{3}} + 2^{-\frac{2}{3}}\right) \Delta_{1,n}^{\frac{2}{3}} \Delta_{2,n}^{\frac{1}{3}}, \Delta_{1,n} \Delta_{2,n}\right) + {\left(\frac{2}{\pi}\right)}^{\frac{1}{2}} d_{1}(S_{n}, S).
  \end{equation*}
\end{theorem}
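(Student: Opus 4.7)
The plan is to prove the bound via the triangle inequality, splitting the distance through the intermediate Gaussian mixture $\lN(0,S_{n}^{2})$:
\begin{equation*}
d_{1}\bigl(F_{n}, \lN(0,S^{2})\bigr) \;\leq\; d_{1}\bigl(F_{n}, \lN(0,S_{n}^{2})\bigr) + d_{1}\bigl(\lN(0,S_{n}^{2}), \lN(0,S^{2})\bigr).
\end{equation*}
The first term is a purely Poisson-side quantity that can be handled by the already-proved quantitative bound \cref{theorem:stable_convergence_wasserstein} applied with the deterministic data $F = F_{n}$, $u = u_{n}$, $S = S_{n}$; the second is a purely ``target-side'' object that is controlled by the Monge-Kantorovich distance between $S_{n}$ and $S$.

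For the first term, I would apply \cref{theorem:stable_convergence_wasserstein} verbatim. The only thing to check is that the quantity $\dE|\nu(u_{n} D F_{n}) - S_{n}^{2}|$ appearing in the bound reduces, under the hypotheses carried over from \cref{theorem:convergence_stable}, to $\dE|\epsilon_{n}|$. This is immediate in the univariate case: $C_{n} = S_{n} S_{n}^{T} = S_{n}^{2}$, the symmetrization in $\widetilde{[u_{n},DF_{n}]}_{\nu}$ is trivial because the bracket is scalar, and so
\begin{equation*}
\nu(u_{n} D F_{n}) - S_{n}^{2} \;=\; \widetilde{[u_{n}, D F_{n}]}_{\nu} - C_{n} \;=\; \epsilon_{n}
\end{equation*}
by the hypothesis of \cref{theorem:convergence_stable}. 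Substituting this identity, together with the definitions of $\Delta_{1}$ and $\Delta_{2}$ in \cref{theorem:stable_convergence_wasserstein}, produces exactly $\Delta_{1,n}$ and $\Delta_{2,n}$, hence
\begin{equation*}
d_{1}\bigl(F_{n}, \lN(0,S_{n}^{2})\bigr) \;\leq\; \max\!\Bigl(\bigl(2^{\frac{1}{3}} + 2^{-\frac{2}{3}}\bigr)\Delta_{1,n}^{\frac{2}{3}}\Delta_{2,n}^{\frac{1}{3}},\, \Delta_{1,n}\Delta_{2,n}\Bigr).
\end{equation*}

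For the second term, I would use a coupling argument. Let $N$ be standard Gaussian, independent of $(\eta, S, S_{n})$; then $\lN(0,S_{n}^{2})$ is the law of $S_{n} N$ and $\lN(0, S^{2})$ is the law of $S N$. Taking any coupling $(\tilde{S}_{n}, \tilde{S})$ of $(S_{n}, S)$ that is independent of $N$, the pair $(\tilde{S}_{n} N, \tilde{S} N)$ provides an admissible coupling of the two mixtures, and by independence of $N$ and conditioning,
\begin{equation*}
d_{1}\bigl(\lN(0,S_{n}^{2}), \lN(0,S^{2})\bigr) \;\leq\; \dE|\tilde{S}_{n} N - \tilde{S} N| \;=\; \dE|N|\,\dE|\tilde{S}_{n} - \tilde{S}| \;=\; \sqrt{\tfrac{2}{\pi}}\,\dE|\tilde{S}_{n} - \tilde{S}|.
\end{equation*}
Optimizing over the coupling yields the bound $\sqrt{2/\pi}\,d_{1}(S_{n}, S)$. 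Adding the two contributions gives the announced inequality.

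No step is really an obstacle here: the main work has already been done in \cref{theorem:stable_convergence_wasserstein}, and the only subtlety is the bookkeeping identification $\nu(u_{n} D F_{n}) - S_{n}^{2} = \epsilon_{n}$ in one dimension, together with the elementary coupling estimate for Gaussian mixtures with random variance. The result then follows by direct substitution and the triangle inequality.
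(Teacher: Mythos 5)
Your proposal is correct and follows essentially the same route as the paper's proof: the triangle inequality through $\lN(0,S_{n}^{2})$, an application of \cref{theorem:stable_convergence_wasserstein} to bound $d_{1}(F_{n},\lN(0,S_{n}^{2}))$, and the coupling argument $(A_{n}N, AN)$ with $\dE|N| = \sqrt{2/\pi}$ to bound $d_{1}(\lN(0,S_{n}^{2}),\lN(0,S^{2}))$ by $\sqrt{2/\pi}\, d_{1}(S_{n},S)$. Your explicit identification $\nu(u_{n}DF_{n}) - S_{n}^{2} = \epsilon_{n}$ in the univariate case is a useful piece of bookkeeping that the paper leaves implicit, but it is not a departure from the argument.
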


\subsection{Comparison with existing results}%
\label{section:comparison}
First, on the Gaussian space, the authors of~\cite{NourdinNualartLimitsSkorokhod,HarnettNualartCLTStratanovich,NourdinNualartPeccatiStableLimits} work with iterated Skorokhod integrals of any order $q \in \dN$.
That is, given a Gaussian functional $F$ and given $u$ such that $F = \delta^{q}u$, they give probabilistic conditions in terms of $u$ and $F$ for stable convergence of $F$ to a Gaussian mixture.
\cref{theorem:convergence_stable_quali,theorem:convergence_stable_quantitative} are the Poisson version of their results for the case $q=1$.
Due to the lack of diffusiveness on the Poisson space, it does not seem possible to reach a result involving iterated Kabanov integrals, via our method of proof, that is, via integration by parts.

Second, \cref{condition:S} enforces that the convergence of $C_{\Gamma} = [u, DF]$ (or its symmetrized version) determines the asymptotic covariance.
The comparison of $C_{\Gamma}$ and $S S^{T}$ is similar in the Gaussian case~\cite{NourdinNualartLimitsSkorokhod}: the quantity $\langle DF, u \rangle$ (where $D$ is the Malliavin derivative on the Gaussian space) controls the asymptotic variance of the functional $F = \delta u$.
In this respect, let us refer to~\cite[Theorem 5.3.1]{NourdinPeccatiBlueBook} for deterministic variance (for the choice $u = -DL^{-1}F$), to~\cite[Theorem 3.1]{NourdinNualartLimitsSkorokhod}, to~\cite[Theorem 3.2]{HarnettNualartCLTStratanovich} and to~\cite[Theorem 5.1]{NourdinNualartPeccatiStableLimits} for random asymptotic variances.
However, we see from \cref{condition:Snu} that another relevant quantity to consider is $C_{\nu} = {[DF_{n}, u_{n}]}_{\nu}$.
The matrix $C_{\nu}$ would also correspond in the Gaussian setting to $\langle u, DF \rangle$ since $\Gamma(F)$ and ${|DF|}^{2}$ coincide on the Gaussian space.
As already observed by~\cite{DoeblerPeccatiFMT}, working with $C_{\Gamma}$ rather than $C_{\nu}$ is critical in obtaining a fourth moment theorem.
We also work with $C_{\Gamma}$ to obtain our stable version of their fourth moment theorem.
When working with deterministic covariances one can choose $C_{\nu}$ and still obtain sufficient conditions for convergence of Poisson functionals to a Gaussian (see, for instance~\cite{LachiezeReyPeccatiContractions,LachiezeReyPeccatiMarkedProcess,ReitznerSchulteCLTUStatistics}).

Our condition \cref{condition:Wnu} is the exact counterpart of the condition $\langle u_{n}, h \rangle \to 0$ (see~\cite[Remark 3.2]{NourdinNualartLimitsSkorokhod}) in the Gaussian setting, enforcing some asymptotic independence.
When working with the energy bracket, we have \cref{condition:W} that we can also regard as an asymptotic independence condition.
\cref{condition:RS} plays the same role, in our setting, as $\langle u, DS^{2} \rangle \to 0$ in~\cite{NourdinNualartPeccatiStableLimits}.
On the Gaussian space, by the chain rule, $DS^{2} = 2S DS$.
In our case we cannot have this simplification, which implies that we have to formulate our condition in terms of $SDS$.
This adds an extra difficulty since, in practice, the convergence of $C_{\nu}$ or $C_{\Gamma}$ only provides information on $S S^{T}$ but not on $S$.
As the condition with $DS^{2}$ is already present in the Gaussian setting \cite{NourdinNualartPeccatiStableLimits}, we do not expect that the condition \cref{condition:RS} could disappear in general.
The condition \cref{condition:R3} is specific to the Poisson setting.
Controlling quantities of the form $\int |D_{z}^{+}L^{-1} F| {|D_{z}^{+}F|}^{2} \nu(\dd z)$ is standard in the theory of limit theorems for Poisson functionals and already appeared in the first result on the Malliavin-Stein method on the Poisson space~\cite[Theorem 3.1]{PeccatiSoleTaqquUtzet}, as well as in the proof of the fourth moment theorem on the Poisson space~\cite[Equation 4.2]{DoeblerPeccatiFMT}.
These correspond to the choice $u = -DL^{-1}F$ in \cref{condition:R3}.
In our case we have an extra term of the form $\int |u(z)| {|D_{z}^{+}S|}^{2} \nu(\dd z)$.
This term is also the result of the lack of a chain rule and we do not expect we could remove it.

Furthermore, the authors of~\cite{NourdinNualartLimitsSkorokhod,HarnettNualartCLTStratanovich,NourdinNualartPeccatiStableLimits} only consider results involving the convergence in $\CL^{1}(\dP)$ of the Stein matrix $C_{\nu}$, thus imposing measurability with respect to the underlying Gaussian process on the limit covariance.
In our case, when the limiting covariance is non-negative, we can replace the condition of convergence in $\CL^{1}(\dP)$ by the weaker form of stable convergence to obtain \cref{theorem:convergence_stable}.
This modification relies on our quantitative bounds, which is why, in this case we need to check \cref{condition:RS} while \cref{theorem:convergence_stable_quali} does not need to enforce this condition.
Being quantitative, the results of~\cite{NourdinNualartPeccatiStableLimits} could also be modified in order to obtain a result similar to \cref{theorem:convergence_stable} with the same proof as the one we gave in the Poisson setting.

Lastly, in the multidimensional case, our bound in \cref{theorem:convergence_stable_quantitative} holds for every symmetric covariance random matrix $C = SS^{T}$, while the results of \cite{NourdinNualartPeccatiStableLimits} are limited to the case of a diagonal matrix.
  \cite{HarnettNualartCLTStratanovich} also deals with generic matrices but relies on the so-called method of the characteristic function that is not known to provide quantitative bounds.

On the other hand, the convergence to Poisson mixtures was not considered for Gaussian functionals (recall that by \cite[Theorem 2.10.1]{NourdinPeccatiBlueBook} random variables in a fixed Wiener chaos are absolutely continuous with respect to the Lebesgue measure).
Several authors have applied the Malliavin-Stein approach on the Poisson space to consider convergence to a Poisson random variable with deterministic mean.
The work of \fcite{PeccatiChenStein} is the first result in that direction.
Selecting $u_{n} = - DL^{-1}F_{n}$ and $M = \dE M = c$ in \cref{condition:M} exactly yields the condition of \cite[Proposition 3.3]{PeccatiChenStein}: $\langle -DL^{-1}F_{n}, DF_{n} \rangle_{\CL^{2}(\nu)} \to c$ (remark that \cite{PeccatiChenStein} works with non-centered random variables).
For Poisson approximation, the above discussion on the difference between $S_{D}$ and $S_{\Gamma}$ does not apply as we only obtain a condition involving $S_{D}$ (see \cref{remark:MS}).
Our condition \cref{condition:P3} is similar to the one in \cite{PeccatiChenStein}.

Contrary to \cite{PeccatiChenStein}, we cannot obtain quantitative bounds for Poisson approximation.
In fact, we do not know how to adapt the methods in \cref{section:quantitative_results} to reach estimates for the distance of a Poisson functional to a Poisson mixture.
Indeed, our approach towards quantitative estimates relies on the computability of the Malliavin derivative of a Gaussian mixture, since they always can be written $SN$ with $N$ independent of $\eta$, and in this case $D(SN) = (DS)N$.
However, if $N(M)$ is a Poisson mixture directed by $M$, we have:
\begin{equation*}
  D_{z} N(M) = N(1_{[0,M+D_{z}M]}) - N(1_{[0,M]}) - D_{z}M.
\end{equation*}
The computations with this quantity seem not tractable, and we need new techniques to tackle this problem; we reserve exploring this direction of research for future works.

\subsection{Proofs}\label{section:proofs}
\subsubsection*{General strategy}
Since Stein equations for Gaussian or Poisson mixtures are not available, we use an interpolation method (employed by \cite{NourdinNualartPeccatiStableLimits} in the Gaussian setting) or a characteristic function method (used by \cite{NourdinNualartLimitsSkorokhod} in the Gaussian setting) that consists of obtaining a differential equation for the conditional Fourier transform.
As it is common in the Malliavin-Stein setting, regarding both methods, we obtain the convergence of $(F_{n})$ by controlling quantities of the form $\dE F_{n} \phi(F_{n})$, where $\phi$ varies within a class of smooth functions.
Since we assume that $F_{n} = \delta u_{n}$, our strategy exploits the duality between $\delta$ and $D$ to write $\dE F_{n} \phi(F_{n}) = \dE {[u_{n}, D\phi(F_{n})]}_{\beta}$ for $\beta \in \{ \nu, \Gamma\}$.
We, then, develop $D\phi(F)$ using a discrete equivalent of the chain rule at the level of the Poisson space.
We will use this strategy of integration by parts several times; the structure of the argument being the same every time, we first state and prove generic lemmas before proceeding to the proof of our main results.

\subsubsection{Pseudo chain rules and integration by parts}
\subsubsection*{Substitute for the chain rule}
The Markov generator $L$ is not a diffusion (see \cite[Equation 1.3]{LedouxGeometryMarkov}).
Likewise, the add operator $D^{+}$ and drop operator $D^{-}$ are not derivations (see \cite[Chapter III, Section 10]{BourbakiAlgebra} for details on derivations).
In particular, the classical chain rule does not apply, that is, for a generic smooth function $\phi \colon \dR^{d} \to \dR$ and a random variable $F$:
\begin{equation*}
  D\phi(F) \ne \langle \nabla \phi(F), DF \rangle.
\end{equation*}
However, since $D^{+}\phi(F) = \phi(F + D^{+}F) - \phi(F)$, writing, for $z \in Z$:
\begin{equation*}
  R_{z}^{+}(\phi, F) = D_{z}^{+}\phi(F) - \langle \nabla \phi(F), D_{z}^{+} F \rangle,
\end{equation*}
and applying the fundamental theorem of calculus we obtain that
\begin{equation}\label{equation:chain_rule_plus}
  |R_{z}^{+}(F, \phi)| \leq {|\nabla^{2} \phi|}_{\infty} {|D_{z}^{+}F|}^{2}, \qquad \forall z \in Z.
\end{equation}
A similar formula holds for $D_{z}^{-}$, that is with
\begin{equation*}
  R_{z}^{-}(F,\phi) = D_{z}^{-}\phi(F) - \langle \nabla \phi(F), D_{z}^{-}F \rangle,
\end{equation*}
we find that
\begin{equation}\label{equation:chain_rule_minus}
  |R_{z}^{-}(F, \phi)| \leq {|\nabla^{2} \phi|}_{\infty} {|D_{z}^{-}F|}^{2}, \qquad \forall z \in Z.
\end{equation}
Let us also observe that the definitions of $R^{+}$ and $R^{-}$ still make sense when $\phi$ is $\dR^{q}$-valued.
In this case, $\nabla \phi$ is the Jacobian matrix of $\phi$ and $\langle \nabla \phi(F), D^{\pm}F \rangle$ is replaced with the product $\nabla \phi(F) D^{\pm}F$.

\subsubsection*{Taylor formula for difference operators}
Another possible approach to substitute to the chain rule is to use finite differences that will be useful when targeting Poisson mixtures.
This yields the following quantity:
\begin{equation*}
  P_{z}(\phi, F) = D_{z}^{+}\phi(F) - D_{z}^{+}F(\phi(F+1) - \phi(F)),
\end{equation*}
where $\phi \colon \dR \to \dR$ is smooth and $F$ is a random variable.
Another application of Taylor's formula gives us the following discrete counterpart of the chain rule.
\begin{equation}\label{equation:chain_rule_difference}
  |P_{z}(F, \phi)| \leq  |D^{+}F (D^{+}F - 1)| {|\nabla^{2} \phi|}_{\infty}.
\end{equation}
note that 
\begin{remark}
  It is possible to obtain a similar formula for $D^{-}$ or on $\dR^{d}$ but we have no use for it.
\end{remark}

\subsubsection*{Integration by parts formulae}
Those integration by parts formulae at the level of the Poisson space are obtained with Malliavin calculus.
For short, let us also write $u^{-}(z) = (1-D_{z}^{-})u(z)$ whenever $u \in \CL^{2}(\nu \otimes \dP)$.
\begin{lemma}\label{lemma:integration_by_parts_energy}
  Let $F = (F_{1}, \dots, F_{q}) \in \dom D$, $u = (u_{1}, \dots, u_{d}) \in \dom \delta$, and $G \in \CG$.
  Let $\phi \colon \dR^{q} \to \dR^{d}$, twice continuously differentiable and with bounded derivatives.
  Assume that, for $l \in \{1,2\}$, $\int {|u(z)|} {|D_{z}^{+}F|}^{l} \nu(\dd z) < \infty$.
  Then:
  \begin{equation}\label{equation:integration_by_parts_energy}
    \begin{split}
      \dE \langle \phi(F)G, \delta u \rangle &= \dE G \left\langle \nabla \phi(F), {[u,DF]}_{\Gamma} \right\rangle \\
                                                    &+ \dE \left\langle \phi(F), {[u,DG]}_{\Gamma} \right\rangle \\
                                                    &+ \frac{1}{2} \dE G \int \left\langle u(z), R^{+}_{z}(\phi, F) \right\rangle \nu(\dd z) \\
                                                    &+  \frac{1}{2} \dE G \int \left\langle u^{-}(z), R^{-}_{z}(\phi, F) \right\rangle \eta(\dd z).
    \end{split}
  \end{equation}
\end{lemma}

\begin{proof}
  We write $A = \langle \phi(F), \delta u \rangle$, $B = G \left\langle \nabla \phi(F), {[u,DF]}_{\Gamma} \right\rangle$, $C =  \left\langle \phi(F), {[u,DG]}_{\Gamma} \right\rangle$, and
  \begin{align*}
    & K^{+} = \frac{1}{2} G \int \left\langle u(z), R^{+}_{z}(\phi, F) \right\rangle \nu(\dd z);\\
    & K^{-} = \frac{1}{2} G \int \left\langle u^{-}(z), R^{-}_{z}(\phi, F) \right\rangle \eta(\dd z).
  \end{align*}
    First, let us check that every term is well defined.
    Since the derivatives of $\phi$ are bounded, $\phi$ is Lipschitz.
    Since $F \in \dom D$, we find that $\phi(F) \in \dom D$ and $G \phi(F) \in \dom D$.
    Since $u \in \dom \delta$, we have that $\delta u \in \CL^{2}(\dP)$ and, then, $A \in \CL^{1}(\dP)$.
    Applying the Cauchy-Schwarz inequality and the Mecke formula, we find, in view of the assumptions and of \cref{equation:chain_rule_plus,equation:chain_rule_minus}:
    \begin{align*}
      & \dE |B| \leq {|\nabla \phi|}_{\infty} {|G|}_{\CL^{\infty}(\dP)} \dE \int {|u(z)|} {|D_{z}^{+} F|} < \infty; \\
      & \dE |C| \leq {|\phi|}_{\infty} \dE \int {|u(z)|} {|D_{z}^{+} G|} \nu(\dd z) < \infty; \\
      & \dE |K^{+}| + \dE |K^{-}| \leq {|\nabla^{2} \phi(F)|}_{\infty} {|G|}_{\CL^{\infty}(\dP)} \dE \int {|u(z)|} {|D_{z}^{+} F|}^{2} \nu(\dd z) < \infty.
    \end{align*}
    These estimates also justify the use of the Mecke formula on non-necessarily non-negative quantities that we do in the rest of the proof.
    Now, we prove the equality \cref{equation:integration_by_parts_energy}.
    Let $D = B + C + R^{+} - R^{-}$.
    By integration by parts \cref{equation:integration_by_parts_delta}, we find
    \begin{equation}\label{equation:A}
      \dE A = \dE \int \left\langle D_{z}^{+}(\phi(F)G), u(z) \right\rangle \nu(\dd z).
    \end{equation}
    By the Mecke formula \cref{equation:Mecke}, by \cref{equation:derivation_energy}, and by the fact that all the terms are integrable, we get
    \begin{equation*}
      \dE A = \dE {[D(\phi(F)G), u]}_{\Gamma} = \dE \phi(F) {[D G, u]}_{\Gamma} + \dE G {[D\phi(F), u ]}_{\Gamma},
    \end{equation*}
    We conclude the proves by definition of the energy bracket and of $R^{+}$ and $R^{-}$.
\end{proof}
When $G=1$, we can directly use the definition of $R^{+}$ in \cref{equation:A}, this yields the following integration by parts involving ${[\cdot,\cdot]}_{\nu}$ rather than ${[\cdot,\cdot]}_{\Gamma}$.
\begin{lemma}\label{lemma:integration_by_parts_D}
  Under the same assumptions as for \cref{lemma:integration_by_parts_energy}, it holds
  \begin{equation*}
    \begin{split}
      \dE \langle \phi(F), \delta u \rangle &= \dE \left\langle \nabla \phi(F), {[u, DF]}_{\nu} \right\rangle \\
                                                   &+ \dE \int \left\langle u(z), R_{z}^{+}(\phi, F) \right\rangle \nu(\dd z).
    \end{split}
  \end{equation*}
\end{lemma}
A similar formula holds for $P$:
\begin{lemma}\label{lemma:integration_by_parts_P}
  Under the same assumptions as for \cref{lemma:integration_by_parts_energy} with $q = d = 1$, it holds
  \begin{equation*}
    \begin{split}
      \dE \phi(F) \delta u &= \dE (\phi(F+1) - \phi(F)) \nu(u DF) \\
                           &+ \dE \nu(u P(\phi, F)).
    \end{split}
  \end{equation*}
\end{lemma}

\subsubsection{Proofs of the qualitative results}
\begin{proof}[Proof of {\cref{theorem:convergence_stable_quali}}]
  We first prove the theorem under \cref{condition:W,condition:S}.
  By \cref{condition:S}, we have that
  \begin{equation*}
    \dE F_{n} F_{n}^{T} = \dE {[u_{n}, DF_{n}]}_{\Gamma} \tto{} \dE S S^{T} < \infty.
  \end{equation*}
  So $(F_{n})$ is bounded in $\CL^{2}(\dP)$.
  Let $G \in \CG$.
  For all $n \in \dN$, we let $\xi_{n} = (F_{n}, G)$.
  Since $(F_{n})$ is bounded in $\CL^{2}(\dP)$, $(\xi_{n})$ is tight.
  We can extract a subsequence (still denoted $(\xi_{n})$) such that $(\xi_{n})$ converges in law to $(F_{\infty}, G)$.
  Let $\psi_{n}(\lambda) = \dE G \e^{i \langle \lambda, F_{n} \rangle}$, and $\psi_{\infty}(\lambda) = \dE G \e^{i \langle \lambda, F_{\infty} \rangle}$.
  By convergence in law, we have that, as $n \to \infty$, $(\psi_{n})$ converges uniformly to $\psi_{\infty}$.
  Since $(\xi_{n})$ is bounded in $\CL^{2}(\dP)$ it is also uniformly integrable, and we find that
  \begin{equation*}
    \nabla \psi_{n}(\lambda) = i \dE F_{n}G \e^{i \langle \lambda, F_{n} \rangle} \tto{} i \dE F_{\infty}G \e^{i \langle \lambda, F_{\infty} \rangle} = \nabla \psi_{\infty}(\lambda).
  \end{equation*}
  By \cref{lemma:integration_by_parts_energy}, \cref{equation:chain_rule_minus,equation:chain_rule_plus,equation:Mecke}, we can find $(R_{n})$ such that
\begin{equation*}
    |R_{n}| \leq \lambda^{2} \dE \int {|u_{n}(z)|} {|D_{z}^{+}F_{n}|}^{2} \nu(\dd z),
  \end{equation*}
  and
  \begin{equation*}
    \begin{split}
      \nabla \psi_{n}(\lambda) &= i \dE G {\left[u_{n}, D\left(\e^{i \lambda F_{n}}\right)\right]}_{\Gamma} + i \dE \e^{i \langle \lambda, F_{n} \rangle} {[u_{n}, DG]}_{\Gamma} \\
                               &= - \lambda \dE G \e^{i \langle \lambda, F_{n} \rangle} {[u_{n}, DF_{n}]}_{\Gamma} + i \dE \e^{i \langle \lambda, F_{n} \rangle} {[u_{n}, DG]}_{\Gamma} +  R_{n}.
    \end{split}
  \end{equation*}
  We thus see that \cref{condition:S,condition:R3,condition:W} imply that
  \begin{equation*}
    \nabla \psi_{n}(\lambda) \tto{} -\lambda \dE S S^{T} \e^{i \langle \lambda, F_{\infty} \rangle}.
  \end{equation*}
  All in all, we have proved that
  \begin{equation*}
    \frac{\dd}{\dd \lambda} \psi_{\infty}(\lambda) = i \dE G F_{\infty} \e^{i \langle \lambda, F_{\infty} \rangle} = - \lambda \dE G S S^{T} \e^{i \langle \lambda, F_{\infty} \rangle}.
  \end{equation*}
  Thus, we obtain the following differential equation for the conditional characteristic function:
  \begin{equation*}
    \frac{\dd}{\dd \lambda} \dE [\e^{i \lambda F_{\infty}} | \eta] = -\lambda SS^{T} \dE[ \e^{i \lambda F_{\infty}} | \eta ].
  \end{equation*}
  The only solution of this equation with $\psi(0) =1$ is the one given in \cref{equation:conditional_Fourier_Gaussian_mixture}  and this concludes the proof in view of \cref{condition:stable_convergence_G} in \cref{proposition:stable_convergence_G}.
  For the proof under \cref{condition:Wnu,condition:Snu}, we only briefly explain what to modify; the details can be found below, in the proof of \cref{theorem:convergence_stable_quali_P}, where we use this strategy to obtain convergence to a Poisson mixture.
  To work with \cref{condition:Wnu,condition:Snu}, we rather introduce $\psi_{n}(\lambda) = \dE \e^{i \langle \lambda, F_{n} + I_{1}(h) \rangle}$ for some $h \in \CL^{2}(\nu)$.
  Instead of \cref{lemma:integration_by_parts_energy}, we have to use \cref{lemma:integration_by_parts_D}, and we can use \cref{equation:chain_rule_plus} directly without invoking the Mecke formula \cref{equation:Mecke}.
  One concludes with \cref{condition:stable_convergence_I} in \cref{proposition:stable_convergence_G}.
  The rest of the proof is similar.
\end{proof}

\begin{proof}[Proof of {\cref{theorem:convergence_stable_quali_P}}]
  Let $h \in \CL^{2}(\nu)$.
  Let $\lambda \in \dR$, and consider $\psi_{n}(\lambda) = \dE \e^{i \lambda (F_{n} + I_{1}(h))}$, and $\psi_{\infty}(\lambda) = \dE \e^{i \lambda (F_{\infty} + I_{1}(h))}$.
  Since $\dE F_{n}^{2} = \dE \langle u_{n}, D F_{n} \rangle$, using \cref{condition:M}, we see that $F_{n} + I_{1}(h)$ is tight and uniformly integrable.
  Up to extraction, we can find some $F_{\infty}$, such that
  \begin{equation*}
    F_{n} + I_{1}(h) \tto{} F_{\infty} + I_{1}(h),
  \end{equation*}
  and that
  \begin{equation}\label{equation:phi_prime_P}
    \psi_{n}'(\lambda) = i \dE (F_{n} + I_{1}(h)) \e^{i \lambda (F_{n} + I_{1}(h))} \tto{} i \dE (F_{\infty} + I_{1}(h)) \e^{i \lambda (F_{\infty} + I_{1}(h))} = \psi_{\infty}'(\lambda).
  \end{equation}
  On the other hand, by \cref{lemma:integration_by_parts_P,equation:chain_rule_difference}, we have that
  \begin{equation*}
    \psi_{n}'(\lambda) = i \dE \langle u_{n}, DF_{n} + h \rangle\e^{i\lambda (F_{n}+I_{1}(h))}(\e^{i\lambda}-1) + i \dE I_{1}(h) \e^{i \lambda(F_{n}+I_{1}(h))} + R_{n},
  \end{equation*}
  where
  \begin{equation*}
    |R_{n}| \leq \lambda^{2} \int |u_{n}(z) (D_{z}F_{n}-1) D_{z}F_{n}| \nu(\dd z).
  \end{equation*}
  Thus, under \cref{condition:M,condition:P3,condition:W}:
  \begin{equation}\label{equation:phi_prime_P2}
    \lim_{n \to \infty} \psi_{n}'(\lambda) = i \dE (\e^{i\lambda} - 1)M \e^{i\lambda F_{\infty}} + i \dE I_{1}(h) \e^{i \lambda (F_{\infty}+I_{1}(h))}.
  \end{equation}
  Equating, \cref{equation:phi_prime_P,equation:phi_prime_P2}, we obtain that
  \begin{equation*}
    \dE \e^{i \lambda I_{1}(h)} F_{\infty} \e^{i \lambda F_{\infty}} = \dE \e^{i \lambda I_{1}(h)} M (\e^{i\lambda} -1) \e^{i \lambda F_{\infty}}, \quad \forall \lambda \in \dR,\, \forall h \in \CL^{2}(\nu).
  \end{equation*}
  Arguing, by linearity of $I_{1}$, as in the proof of \cref{condition:stable_convergence_I} of \cref{proposition:stable_convergence_G}, we find that:
  \begin{equation*}
    \dE I_{1}(h) F_{\infty} \e^{i\lambda F_{\infty}} = \dE I_{1}(h) M(\e^{i\lambda} -1) \e^{i \lambda F_{\infty}}, \quad \forall \lambda \in \dR,\, \forall h \in \CL^{2}(\nu).
  \end{equation*}
  That is to say, we have proved the following differential equation for the conditional characteristic function:
  \begin{equation*}
    \frac{\dd}{\dd \lambda} \dE [ \e^{i\lambda F_{\infty}} | \eta ] = i (\e^{i\lambda} - 1)M \dE [\e^{i \lambda F_{\infty}} | \eta].
  \end{equation*}
  The unique solution of this equation satisfying $\psi(0) = 1$ is the function given in \cref{equation:conditional_Fourier_Poisson_mixture}.
  This concludes the proof by \cref{condition:stable_convergence_I} in \cref{proposition:stable_convergence_G}.
\end{proof}

\subsubsection{Proofs of the quantitative results in the multivariate case}
\cref{theorem:convergence_stable_quantitative} follows from one of the two following generic bounds, namely either \cref{proposition:quantitative_bound_energy_D} with $h = 0$ and $\phi \in \CF_{3}$ for the case of ${[\cdot, \cdot]}_{\nu}$; or \cref{proposition:quantitative_bound_energy} with $G=1$ and $\phi \in \CF_{3}$ for the case of ${[\cdot, \cdot]}_{\Gamma}$.
We obtain these bounds via the so-called \emph{Talagrand's smart path interpolation} method.
For shot, given $\phi \colon \dR^{d} \to \dR$ smooth, we write $\Phi_{k} = \sup_{x \in \dR^{d}} |\nabla^{k} \phi|(x)$, for $k \in \{1,2,3\}$.
\begin{proposition}\label{proposition:quantitative_bound_energy_D}
  Let $F = (F_{1}, \dots, F_{d}) \in \dom D$, $S \in \cov$, and $N$ be a standard $d$-dimensional Gaussian vector independent of $\eta$.
  Assume that there exists $u \in \dom \delta$ such that $F = \delta u$.
  Then, for all $\phi \in \CC_{b}^{3}(\dR^{d})$ and all $I = I_{1}(h)$, $h \in \CL^{2}(\nu)$:
\begin{equation}\label{equation:quantitative_bound_energy_D}
  \begin{split}
    |\dE \phi(F + I) - \dE \phi(S N + I)| \leq & \frac{1}{2} \Phi_{2} \dE  {\left|{{\left[u, DF\right]}}_{\widetilde{\nu}} - S S^{T}\right|} \\
                                                             &+ \frac{2}{3} \Phi_{3} \dE {\left|{\left[u, (DS)S^{T}\right]}_{\nu}\right|} \\
                                                             &+ \Phi_{3} \dE \int {|h(z) + u(z)|} \left({|D_{z}^{+}F|}^{2} + {|D S|}^{2}\right) \nu(\dd z).
  \end{split}
\end{equation}
\end{proposition}
\begin{proposition}\label{proposition:quantitative_bound_energy}
  Let $F = (F_{1}, \dots, F_{d}) \in \dom D$, $S \in \cov$, and $N$ be a standard $d$-dimensional Gaussian vector independent of $\eta$.
  Assume that there exists $u \in \dom \delta$ such that $F = \delta u$.
Then, for all $\phi \in \CC_{b}^{3}(\dR^{d})$ and all $G \in \CG$:
\begin{equation}\label{equation:quantitative_bound_energy}
  \begin{split}
    |\dE \phi(F)G - \dE \phi(S N)G| &\leq  \frac{1}{2} \Phi_{2} {|G|}_{\infty} \dE  {\left|{{[u, DF]}}_{\widetilde{\Gamma}} - S S^{T}\right|} \\
                                        &+ \frac{2}{3} \Phi_{3} {|G|}_{\infty} \dE {|{[u, (DS)S^{T}]}_{\Gamma}|} \\
                                              &+ \Phi_{3} {|G|}_{\infty} \dE \int {|u(z)|} \left({|D_{z}^{+}F|}^{2} + {|D S|}^{2}\right) \nu(\dd z) \\
                                         &+ \Phi_{1} \dE {\left|{[u,DG]}_{\Gamma}\right|},
  \end{split}
\end{equation}
where for short, we write ${|G|}_{\infty} = {|G|}_{\CL^{\infty}(\dP)}$.
\end{proposition}
We start by proving in details the bounds involving ${[\cdot, \cdot]}_{\Gamma}$ that is more involved, then we explain how to adapt the proof for ${[\cdot, \cdot]}_{\nu}$.
\begin{proof}[Proof of {\cref{proposition:quantitative_bound_energy}}]
  By the assumptions on $u$ and $F$, we have that $\int {|u(z)|} {|D_{z}^{+}F|} \nu(\dd z) < \infty$, and we can assume that $\int {|u(z)|} {|D_{z}^{+}F|}^{2} \nu(\dd z) < \infty$ (otherwise there is nothing to prove).
  Let ${(s_{t})}_{t \in [0,1]}$ be a smooth $[0,1]$-valued path such that $s_{0} = 0$ and $s_{1} = 1$, and define
  \begin{equation*}
    F_{t} = s_{t}F+s_{1-t}S N.
  \end{equation*}
  Let $g(t) = \dE \phi(F_{t})G$.
  Then,
  \begin{equation*}
     \dE \phi(F)G - \dE \phi(S N)G = \int_{0}^{1} \dot{g}_{t} \dd t.
  \end{equation*}
  An explicit computation yields
  \begin{equation}\label{equation:g_dot_1}
    \dot{g}_{t} = \dE [ \langle \nabla\phi(F_{t}), (\dot{s}_{t}F - \dot{s}_{1-t}S N)G \rangle ].
  \end{equation}
  Since $\dom D$ is a linear space, in view of the assumptions, $F_{t} \in \dom D$.
    Since $\nabla \phi$ is Lipschitz, $\nabla\phi(F_{t}) \in \dom D$.
    Using the integration by part formula \cref{lemma:integration_by_parts_energy}, we find that
  \begin{equation}\label{equation:g_dot_1_partial}
\begin{split}
  \dE \langle \nabla \phi(F_{t})G, F \rangle &= \dE G \left\langle \nabla^{2} \phi(F_{t}), s_{t}{[u,DF]}_{\Gamma} + s_{1-t} {[u,(DS)N]}_{\Gamma}  \right\rangle \\
                                                        &+ \dE \left\langle \nabla \phi(F_{t}), {[u,DG]}_{\Gamma} \right\rangle \\
                                             &+ \frac{1}{2} \dE G \int \left\langle u(z), R_{z}^{+}(F_{t}, \nabla \phi) \right\rangle \nu(\dd z) \\
                                             &+ \frac{1}{2} \dE G \int \left\langle u^{-}(z), R_{z}^{-}(F_{t},\nabla \phi) \right\rangle \eta(\dd z).
    \end{split}
  \end{equation}
  Recall that, by integration by parts, $\dE N\psi(N) = \dE \nabla\psi(N)$, for all smooth $\psi$.
  Let
  \begin{equation*}
    \psi(x) = G\partial_{ij} \phi(s_{t}F+s_{1-t}S x).
  \end{equation*}
  Then,
  \begin{equation*}
  \partial_{k}\psi(x) = s_{1-t} G\sum_{l} S_{lk} \partial_{ijl}(s_{t}F+s_{1-t}S x).
  \end{equation*}
  As a consequence, by the previous Gaussian integration by parts:
  \begin{equation}\label{equation:gaussian_ipp_1}
    \dE G \langle \nabla^{2}\phi(F_{t}), {[u, (DS)N]}_{\Gamma} \rangle = s_{1-t} \dE G \langle \nabla^{3} \phi(F_{t}), {[u, (DS)S^{T}]}_{\Gamma}\rangle.
\end{equation}
  Furthermore, by Gaussian integration by parts, we obtain that
  \begin{equation}\label{equation:gaussian_ipp_2}
    \dE \langle \nabla\phi(F_{t}), S N \rangle = s_{1-t} \dE \langle \nabla^{2}\phi(F_{t}), S S^{T} \rangle.
  \end{equation}
  Combining \cref{equation:g_dot_1,equation:g_dot_1_partial,equation:gaussian_ipp_1,equation:gaussian_ipp_2}, we find that
  \begin{equation}\label{equation:g_dot_2}
    \begin{split}
      \dot{g}_{t} &= \dE G \left\langle \nabla^{2}\phi(F_{t}), \left( s_{t}\dot{s}_{t} {{[u, DF]}}_{\widetilde{\Gamma}} - s_{1-t}\dot{s}_{1-t}S S^{T} \right) \right\rangle \\
                  &+ \dot{s}_{t} s_{1-t}^{2} \dE G \langle \nabla^{3}\phi(F_{t}), {[u (DS)S^{T}]}_{\Gamma}\rangle \\
                  &+ \dot{s}_{t} \dE \left\langle \nabla \phi(F_{t}), {[u,DG]}_{\Gamma} \right\rangle \\
                  &+\dot{s}_{t} \frac{1}{2} \dE G \int \left\langle u(z), R_{z}^{+}(F_{t}, \nabla \phi) \right\rangle \nu(\dd z) \\
                  &+\dot{s}_{t} \frac{1}{2} \dE G \int \left\langle u^{-}(z), R_{z}^{-}(F_{t},\nabla \phi) \right\rangle \eta(\dd z).
    \end{split}
  \end{equation}
  Hence, by the Cauchy-Schwarz inequality, we find that
  \begin{equation*}
\begin{split}
  |\dot{g}_{t}| & \leq {|G|}_{\infty} \Phi_{2} s_{t} \dot{s}_{t} \dE \left\langle \nabla^{2}\phi(F_{t}), \left( s_{t}\dot{s}_{t} {{[u, DF]}}_{\widetilde{\Gamma}} - s_{1-t}\dot{s}_{1-t}S S^{T} \right) \right\rangle \\
                &+ \Phi_{3} \dot{s}_{t} s_{1-t}^{2} {|G|}_{\infty} \dE {|{[u, (DS)S^{T}]}_{\Gamma}|} \\
                &+ \Phi_{1} \dot{s}_{t} \dE {|{[u, DG]}_{\Gamma}|} \\
                &+ \frac{1}{2} \dot{s}_{t} {|G|}_{\infty} \Phi_{3} \dE \int {|u(z)|} {|D_{z}^{+}F_{t}|}^{2}  \nu(\dd z) \\
                &+ \frac{1}{2} \dot{s}_{t} {|G|}_{\infty} \Phi_{3} \dE \int {|u^{-}(z)|} {|D_{z}^{-}F_{t}|}^{2}  \nu(\dd z).
\end{split}
    \end{equation*}
    By the Mecke formula \cref{equation:Mecke}, the last two lines are equal.
    By expending the square in ${|D_{z}^{+}F_{t}|}^{2}$ and using that $N$ is centered and independent of $\eta$, the cross term vanishes in the expectation.
    By the fact that $N$ is a normal vector independent of $\eta$, we also find that $\dE [{|(D_{z}^{+}S)N|}^{2} | \eta] = {|(D_{z}^{+}S)|}^{2}$.
    Following these observations, the results is obtained by selecting $s_{t} = t^{\frac{1}{2}}$ (other choices of $s$ could possibly yield better constants).
  The reader can immediately verify that with this choice for $s$, we have that
  \begin{align*}
    & \int_{0}^{1} \dot{s}_{t} \dd t = 1;\\
    & \int_{0}^{1} s_{t} \dot{s}_{t} \dd t = \int_{0}^{1} s_{1-t} \dot{s}_{1-t} = \frac{1}{2}; \\
    & \int_{0}^{1} \dot{s}_{t} s^{2}_{1-t} = \int_{0}^{1} s_{t}^{2} \dot{s}_{t} = \frac{2}{3}.
  \end{align*}
  This concludes the proof.
\end{proof}

\begin{proof}[Proof of {\cref{proposition:quantitative_bound_energy_D}}]
  The strategy of proof is the same and we simply highlight the differences with the previous proof.
  We have to consider instead $g(t) = \dE \phi(F_{t} + I_{1}(h))$ for some $h \in \CL^{2}(\nu)$.
  Then, using \cref{lemma:integration_by_parts_D}, we find that
  \begin{equation*}
    \begin{split}
      \dE \langle \nabla \phi(F_{t} + I_{1}(h)), F \rangle &= \frac{1}{2} \dE G \left\langle \nabla^{2} \phi(F_{t}), s_{t}{[u,DF]}_{\nu} + s_{1-t} {[u,(DS)N]}_{\nu}  \right\rangle \\
                                                           &+ \frac{1}{4} \dE G \int \left\langle (u(z) + h(z)), R_{z}^{+}(F_{t}, \nabla \phi) \right\rangle \nu(\dd z).
    \end{split}
  \end{equation*}
  The rest of the proof is identical to the previous one.
\end{proof}

\begin{proof}[Proof of {\cref{theorem:convergence_stable}}]
  Let $h \in \CL^{2}(\nu)$, and $N \sim \lN(0,id_{\dR^{d}})$.
  For $n \in \dN$, we write:
  \begin{equation*}
    \begin{split}
      \dE \phi(F_{n} + I_{1}(h)) - \dE \phi(S N + I_{1}(h)) &= \dE \phi(F_{n} + I_{1}(h)) - \dE \phi(S_{n}N + I_{1}(h)) \\
                                                            &+ \dE \phi(S_{n}N + I_{1}(h)) - \dE \phi(SN + I_{1}(h)).
    \end{split}
  \end{equation*}
  From \cref{proposition:quantitative_bound_energy_D}, we have that under \cref{condition:Wnu,condition:R3,condition:epsilon,condition:RS,condition:Rh,condition:S3,condition:Sh}:
  \begin{equation*}
    \dE \phi(F_{n} + I_{1}(h)) - \dE \phi(S_{n}N + I_{1}(h)) \tto{} 0.
  \end{equation*}
  On the other hand, under \cref{condition:Cst}, $S_{n}N \tto{stably} S N$, consequently
  \begin{equation*}
    \dE \phi(S_{n}N + I_{1}(h)) - \dE \phi(SN + I_{1}(h)) \tto{} 0.
  \end{equation*}
  We conclude using \cref{proposition:stable_convergence_G}.
\end{proof}

\subsubsection{Proofs of the quantitative result in the univariate case}
In order to deduce \cref{theorem:stable_convergence_wasserstein} from \cref{proposition:quantitative_bound_energy_D}, we need a regularization lemma.
Results comparing the Wasserstein distance with an other variational distance are well-known to the experts, for completeness we state and prove a result here.
\cref{theorem:stable_convergence_wasserstein} is immediately deduced from \cref{proposition:quantitative_bound_energy_D} (with $h = 0$) and the following lemma.
\begin{lemma}\label{lemma:regularization_MK}
  Let $F$ and $F' \in \CL^{1}(\dP)$ such that there exists $a$, $b$, and $c \geq 0$ such that for all $\phi \in \CC^{3}_{b}(\dR)$:
  \begin{equation*}
    \dE \phi(F) - \dE \phi(F') \leq a {|\phi'|}_{\infty} + b {|\phi''|}_{\infty} + c {|\phi'''|}_{\infty}.
  \end{equation*}
  Then,
  \begin{equation}\label{equation:regularization_MK}
    d_{1}(F, F') \leq a + \max\left(\left(2^{\frac{1}{3}} + 2^{-\frac{2}{3}}\right) \Delta_{1}^{\frac{2}{3}} \Delta_{2}^{\frac{1}{3}}, \Delta_{1} \Delta_{2}\right)
  \end{equation}
  where
  \begin{align*}
    & \Delta_{1} = 2 {\left(\frac{2}{\pi}\right)}^{\frac{1}{2}} + \dE |F| + \dE |F'|; \\
    & \Delta_{2} = {\left(\frac{2}{\pi}\right)}^{\frac{1}{2}} b + 2^{\frac{1}{2}} c.
  \end{align*}
\end{lemma}

\begin{proof}
  This result is well-known at different levels of generality, and we follow here the proof of \cite[Theorem 3.4]{NourdinNualartPeccatiStableLimits} (where the reader is referred to for details).
  For $t \in (0,1)$, we define $\phi_{t}(x) = \int \phi(t^{\frac{1}{2}} y + {(1-t)}^{\frac{1}{2}} x) \gamma(\dd y)$, with $\gamma = \lN(0,1)$.
  Then, we have that
  \begin{align*}
    & {|\phi_{t}'|}_{\infty} \leq {|\phi'|}_{\infty}; \\
    & {|\phi_{t}''|}_{\infty} \leq {\left(\frac{2}{\pi}\right)}^{\frac{1}{2}} \frac{{|\phi'|}_{\infty}}{t}; \\
    & {|\phi_{t}'''|}_{\infty} \leq 2^{\frac{1}{2}} \frac{{|\phi'|}_{\infty}}{t}.
  \end{align*}
  On the other hand, we have that
  \begin{equation*}
    \dE \phi(F) - \dE \phi_{t}(F) \leq t^{\frac{1}{2}} {|\phi'|}_{\infty} \left( {\left(\frac{2}{\pi}\right)}^{\frac{1}{2}} + \dE |F|\right).
  \end{equation*}
  Combining all the estimates and optimizing in $t$ yields the desired result.
\end{proof}
\begin{remark}
  From the proof, we see that we do not expect that \cref{equation:regularization_MK} is optimal.
  Consequently, all estimates deduced from \cref{lemma:regularization_MK} are sub-optimal, in particular, \cref{theorem:stable_convergence_wasserstein} is a priori sub-optimal.
\end{remark}
\begin{proof}[Proof of {\cref{theorem:stable_convergence_wasserstein_sequence}}]
  By the triangle inequality, we write
  \begin{equation*}
    d_{1}(F_{n}, \lN(0,S^{2})) \leq d_{1}(F_{n}, \lN(0,S_{n}^{2})) + d_{1}(\lN(0,S_{n}^{2}), \lN(0,S^{2})).
  \end{equation*}
  By \cref{theorem:stable_convergence_wasserstein}, we have that
  \begin{equation} 
d_{1}(F_{n}, \lN(0,S_{n}^{2}))  \leq \max\left(\left(2^{\frac{1}{3}} + 2^{-\frac{2}{3}}\right) \Delta_{1,n}^{\frac{2}{3}} \Delta_{2,n}^{\frac{1}{3}}, \Delta_{1,n} \Delta_{2,n}\right).
\end{equation}
Thus, to conclude the proof we need to prove that
\begin{equation*}
  a_{n} := d_{1}(\lN(0,S_{n}^{2}), \lN(0,S^{2})) \leq {\left(\frac{2}{\pi}\right)}^{\frac{1}{2}} d_{1}(S_{n}, S).
\end{equation*}
Let $A_{n} \sim S_{n}$ and $A \sim S$.
Let $N \sim \lN(0,1)$ independent of $A$ and $A_{n}$.
Then $(A_{n}N, AN)$ is a coupling of $(\lN(0, S_{n}^{2}), \lN(0,S^{2}))$.
Hence, by the formulation of the Wasserstein distance as an infimum over couplings, we find that:
\begin{equation*}
  a_{n} \leq \dE |(A-A_{n})N| = {\left(\frac{2}{\pi}\right)}^{\frac{1}{2}} \dE |A - A_{n}|.
\end{equation*}
Minimizing over all couplings $(A, A_{n})$ proves the claim.
This completes the proof.
\end{proof}
\begin{remark}
  From the proof, we see that working with the Wasserstein distance is crucial.
  For instance, we do not know if $d_{3}(\lN(0,S^{2}), \lN(0,T^{2})) \leq c d_{3}(S, T)$, for some $c > 0$.
\end{remark}

\section{Convergence of stochastic integrals}%
\label{section:stochastic_integrals}

\subsection*{Outline}
We apply the results of \cref{section:abstract_results} to stochastic integrals.
In particular, we deduce \cref{proposition:stable_fourth_moment}, that is a stable version of the fourth moment theorem of \fcite{DoeblerPeccatiFMT}, and \fcite{DoeblerVidottoZheng}; and \cref{theorem:order_2_poisson,theorem:order_2_gaussian} that give sufficient conditions for a sequence of Itô-Poisson integrals of order $2$ to converge to a Gaussian or Poisson mixture.
We recall that a stochastic integral of order $q$ is simply an eigenvector of $L$ associated with the eigenvalues $q$.
More precisely, it is possible to construct a bijective isometry $I_{q}$ from the symmetric functions of $\CL^{2}(\nu^{q})$ to $\ker(L+q)$.
Hence every stochastic integral can be written $I_{q}(h)$ for some $h \in \CL^{2}(\nu^{q})$ symmetric.
Moreover the mapping $I_{q}$ is extended to a continuous mapping from $\CL^{2}(\nu^{q})$ to $\CL^{2}(\dP)$ (by setting $I_{q}(h) = I_{q}(\tilde{h})$ where $h$ is the symmetruzation of $h$).
Here we only use three properties: that $\ker(L+q) \subset \dom D$ and that $D_{z} I_{q}(h) = q I_{q-1}(h(z, \cdot))$; that $\dE {I_{q}(h)}^{2} = q! \nu^{q}(h^{2})$; as well as a product formula for stochastic integrals that expresses the product $I_{q}(h) I_{q'}(h')$ as a linear combination of stochastic integrals of order no greater than $p + q$ and whose itegrands can be written explicitely in terms of the so-called \emph{star-contractions} of $h$ and $h'$ (see \cite[Proposition 5]{LastAnaSto} as well as \cite{DoeblerPeccatiProduct}).
More details can be found on these stochastic integrals in \cite{ItoPoissonIntegrals,Surgailis,LastAnaSto,LastPenrose}.

\subsection{A stable fourth-moment theorem for normal approximation}%
\label{section:fourth_moment}
In a recent reference, \cite{DoeblerVidottoZheng} proves a multidimensional fourth-moment theorem on the Poisson space, thus refining and generalizing the previous findings of \cite{DoeblerPeccatiFMT}.
It is worth noting that taking $G = 0$ and $S$ deterministic in~\cref{equation:quantitative_bound_energy} yields the same bound as~\cite[Equation 4.2]{DoeblerPeccatiFMT}.
In fact, as a first application of~\cref{theorem:convergence_stable}, we deduce a stable fourth-moment theorem on the Poisson space.

\begin{proposition}[Stable fourth-moment theorem]\label{proposition:stable_fourth_moment}
  Let $q_{1}, \dots, q_{d} \in \dN$.
  For $n \geq 1$, let $(f^{i}_{n}) \subset \CL^{2}(\mu^{q_{i}})$ and let $F_{n} = (I_{q_{1}}(f^{1}_{n}), \dots, I_{q_{d}}(f^{d}_{n}))$.
  Assume that $(F_{n})$ is bounded in $\CL^{2}(\dP)$.
  Then, the following are equivalent:
  \begin{enumerate}[(i)]
    \item $F_{n}$ converges stably to a Gaussian vector.\label{condition:fmt_i}
    \item For all $i \in [d]$, $F_{n}^{i}$ converges in law to a Gaussian random variable.\label{condition:fmt_ii}
    \item For all $i \in [d]$, $\dE {\left(F_{n}^{i}\right)}^{4} - 3 {\left(\dE {\left(F_{n}^{i}\right)}^{2}\right)}^{2} \tto{} 0$.\label{condition:fmt_iii}
    \item $\Var \Gamma(L^{-1}F_{n}, F_{n}) \tto{} 0$.\label{condition:fmt_iv}
    \end{enumerate}
\end{proposition}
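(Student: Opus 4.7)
The plan is to close the cycle (i) $\Rightarrow$ (ii) $\Rightarrow$ (iii) $\Leftrightarrow$ (iv) $\Rightarrow$ (i). The first implication is immediate: stable convergence entails convergence in distribution, and a marginal of a Gaussian vector is Gaussian. The three equivalences among (ii), (iii), and (iv) are the content of the univariate and multivariate fourth-moment theorems on the Poisson space proved in \cite{DoeblerPeccatiFMT,DoeblerVidottoZheng}, which I would cite directly. The genuinely new content is the upgrade of convergence in law to stable convergence, i.e., (iv) $\Rightarrow$ (i).

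For (iv) $\Rightarrow$ (i), I would apply \cref{theorem:convergence_stable_quali} with the canonical choice $u_n = -DL^{-1}F_n$, which always satisfies $\delta u_n = F_n$ and has the explicit chaotic form $u_n^{(i)}(z) = I_{q_i-1}(f^i_n(z,\cdot)) = q_i^{-1} D_z F_n^{(i)}$. With this choice, $[u_n, DF_n]_\Gamma = -\Gamma(L^{-1}F_n, F_n)$, so (iv), combined with the identity $\dE \Gamma(L^{-1}F_n^i, F_n^j) = \dE F_n^i F_n^j$ and the $\CL^2$-boundedness (which, up to extracting a subsequence, produces a deterministic limit $\Sigma = \lim \dE[F_n F_n^T]$), immediately yields \cref{condition:S} with $SS^T = \Sigma$. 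For \cref{condition:R3}, the identity $u_n^{(i)}(z) = q_i^{-1} D_z F_n^{(i)}$ gives $|u_n(z)|_{\ell^2} \leq (\min_i q_i)^{-1} |D_z F_n|_{\ell^2}$, reducing matters to estimating $\dE \int |D_z F_n|_{\ell^2}^3 \nu(\dd z)$; Cauchy-Schwarz then controls this by $(\dE \int |D_z F_n|_{\ell^2}^2 \nu(\dd z))^{1/2}(\dE \int |D_z F_n|^4_{\ell^2}\nu(\dd z))^{1/2}$, where the first factor equals $\dE |F_n|_{\ell^2}^2$ and is bounded, while the second factor is exactly \cref{condition:R4}, known to vanish under (iii) by the estimates in the proof of the fourth-moment theorem in \cite{DoeblerPeccatiFMT}.

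The main obstacle is to verify \cref{condition:W}, i.e.\ $\dE|[u_n, DG]_\Gamma| \to 0$ for every $G \in \CG$. The naive Cauchy-Schwarz bound $|[u_n, DG]_\Gamma|^2 \leq [u_n, u_n]_\Gamma \cdot [DG, DG]_\Gamma = \Gamma(L^{-1}F_n)\,\Gamma(G)$ is of no help, since $\dE\,\Gamma(L^{-1}F_n)$ is comparable to $\dE |F_n|_{\ell^2}^2$ and does not vanish. Instead, I would exploit the chaos structure: expand $G = \e^{-\eta(\psi)} \in \CG$ through its chaos decomposition $G = \sum_k I_k(g_k)$, so that $D_z G = \sum_k k\, I_{k-1}(g_k(z,\cdot))$, and use the product formula \cref{equation:product_formula} together with \cref{equation:expectation_energy_bracket} to represent $[u_n^{(i)}, DG]_\Gamma$ (up to an $\CL^1$-controlled $\eta$-correction handled via the Mecke formula) as a sum of Poisson chaos integrals whose kernels are star contractions $f^i_n \star_r^l g_k$. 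The characterisation of the fourth-moment condition (iv) in terms of vanishing self-contractions $\|f^i_n \star_r^l f^i_n\|_{\CL^2}\to 0$, which is proved inside \cite{DoeblerPeccatiFMT,DoeblerVidottoZheng}, transfers by a Cauchy-Schwarz / polarization argument to $\|f^i_n \star_r^l g_k\|_{\CL^2} \to 0$; combined with hypercontractivity of Poisson chaos (to pass from $\CL^2$- to $\CL^1$-control), this yields \cref{condition:W}.

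Once \cref{condition:S,condition:R3,condition:W} are all in place, \cref{theorem:convergence_stable_quali} delivers $F_n \tto{stably} \lN(0,\Sigma)$, identifying the limit as $\Sigma^{1/2}N$ with $N$ independent of $\eta$, and the cycle of equivalences closes. The principal obstacle is \cref{condition:W}: unlike (ii)--(iv), which involve only $F_n$ and its Malliavin operators, (W$_\Gamma$) expresses asymptotic independence of $F_n$ from the full $\sigma$-algebra $\gW$, and pulling it out of (iv) requires the full strength of the contraction characterisation of the fourth-moment condition together with an $L^2$--$L^1$ hypercontractive estimate to absorb the $\eta$-integral part of the energy bracket.
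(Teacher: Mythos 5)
Your overall architecture — the cycle (i) $\Rightarrow$ (ii) $\Rightarrow$ (iii) $\Rightarrow$ (iv) $\Rightarrow$ (i), the choice $u_{n} = -DL^{-1}F_{n}$, and the verification of \cref{condition:S} and \cref{condition:R3} via Cauchy-Schwarz and the vanishing of $\dE \int {|D_{z}F_{n}|}_{\ell^{2}}^{4} \nu(\dd z)$ — matches the paper's proof. But your treatment of \cref{condition:W}, which you yourself identify as the main obstacle, contains two genuine errors. First, you invoke a ``characterisation of the fourth-moment condition in terms of vanishing self-contractions $\|f^{i}_{n} \star_{r}^{l} f^{i}_{n}\|_{\CL^{2}} \to 0$, proved inside \cite{DoeblerPeccatiFMT,DoeblerVidottoZheng}''. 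No such characterisation exists there, and indeed it cannot: on the Poisson space the fourth-moment condition only forces certain \emph{linear combinations} of star-contractions to vanish, and is \emph{strictly weaker} than the vanishing of each individual contraction norm. The paper makes exactly this point in the remark following \cref{proposition:stable_fourth_moment}, when contrasting it with \cite[Theorem 2.22]{BourguinPeccatiPortmanteau}; this distinction is precisely what makes the fourth-moment theorem of \cite{DoeblerPeccatiFMT} (proved by carré-du-champ methods, not contractions) nontrivial. Second, your $\CL^{2}$-to-$\CL^{1}$ step relies on ``hypercontractivity of Poisson chaos'', which fails: already in the first chaos, $F = I_{1}(1_{A})$ with $\nu(A) = \epsilon$ satisfies ${(\dE F^{4})}^{1/4} / {(\dE F^{2})}^{1/2} \sim \epsilon^{-1/4} \to \infty$ as $\epsilon \to 0$, so no norm equivalence between $\CL^{p}(\dP)$ norms holds uniformly on a fixed Poisson chaos. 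There is also an unaddressed summability issue in expanding $G = \e^{-\eta(\psi)}$ over its infinitely many chaoses.

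The irony is that the step you declared hardest is the easiest in the paper's proof, and the simple bound you dismissed too quickly is nearly the right one — you just applied Cauchy-Schwarz with the wrong exponents. Since $G \in \CG$ is bounded, \cref{lemma:D_bounded} gives $DG \in \CL^{\infty}(\dP \otimes \nu)$, and $DG$ is supported (in $z$) on the finite-measure set $\{\psi > 0\}$; moreover the $\eta$-integral half of the bracket reduces to the $\nu$-integral half after the Mecke formula, since $(1-D_{z}^{-})$ evaluated at $\eta + \delta_{z}$ restores the value at $\eta$. Hölder's inequality with exponents $(4/3, 4)$ on the finite-measure support then yields
\begin{equation}
  \dE \left|{[u_{n}, DG]}_{\Gamma}\right| \leq C\, {|DG|}_{\CL^{\infty}(\dP\otimes\nu)} {\left(\dE \int {|DF_{n}|}_{\ell^{2}}^{4}\, \dd\nu \right)}^{1/4},
\end{equation}
with $C$ depending only on $\nu(\psi>0)$. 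The right-hand side is controlled by exactly the same fourth-moment quantity you already needed for \cref{condition:R3}, which vanishes under (iv) by \cite[Lemma 3.2]{DoeblerPeccatiFMT} and \cite[Remark 5.2]{DoeblerVidottoZheng}. No contraction analysis and no hypercontractivity are needed; the asymptotic independence encoded in \cref{condition:W} comes for free from the fourth moment, because $DG$ is bounded with finite-measure support.
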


\begin{remark}
  If either of the conditions of the theorem is satisfied then, as $n \to \infty$, $\Gamma(L^{-1}F_{n}, F_{n}) \to \sigma \sigma^{T}$ in $\CL^{2}(\dP)$, where $\sigma$ is some deterministic matrix.
  The covariance of the limit Gaussian vector is $\sigma \sigma^{T}$.
\end{remark}
\begin{remark}
  \cref{proposition:stable_fourth_moment} is very close to~\cite[Theorem 2.22]{BourguinPeccatiPortmanteau}.
  However, one condition of their theorem requires that the norms of each of the individual star-contractions vanish.
  This is strictly stronger than a vanishing fourth-moment as, by the product formula, this condition translates in vanishing properly chosen linear combinations of the star-contractions (see \cite{DoeblerPeccatiProduct}).
\end{remark}

\begin{proof}
  It is clear that \cref{condition:fmt_i} implies \cref{condition:fmt_ii}.
  That \cref{condition:fmt_ii} implies (and in fact is equivalent to) \cref{condition:fmt_iii} is the main finding of~\cite{DoeblerPeccatiFMT}.
  That \cref{condition:fmt_iii} implies \cref{condition:fmt_iv} is a consequence of~\cite[Equation 4.3, Lemma 4.1]{DoeblerVidottoZheng}.
  Let us prove \cref{condition:fmt_iv} implies \cref{condition:fmt_i}.
  Under \cref{condition:fmt_iv}, $-\Gamma(L^{-1}F_{n}, F_{n}) \to \sigma \sigma ^{T}$, in $\CL^{2}(\dP)$, as $n \to \infty$.
  We apply \cref{theorem:convergence_stable_quali} with $S = \sigma$ and \begin{equation*}
    u_{n}(z) = - \left(I_{q_{1}-1}(f_{n}^{1}(z,\cdot)), \dots, I_{q_{d}-1}(f_{n}^{d}(z,\cdot))\right)= - D_{z} L^{-1}F_{n}.
  \end{equation*}
  We have that $\delta u_{n} = - \delta D L^{-1} F_{n} = F_{n}$, and that ${[u_{n}, F_{n}]}_{\Gamma} = - \Gamma(L^{-1}F_{n}, F_{n})$.
  Thus, \cref{condition:S} is satisfied.
From~\cite[Lemma 3.4]{LastPeccatiSchulteSecondOrderPoincare}, we have that
\begin{equation*}
  \label{equation:continuity_pseudo_inverse}
\dE \int {|D_{z}L^{-1}F_{n}|}^{2} \nu(\dd z) \leq \dE \int {|D_{z}F_{n}|}^{2} \nu(\dd z).
\end{equation*}
Hence, applying the Cauchy-Schwarz inequality, we find that
\begin{equation*}
  \dE \int {|u_{n}(z)|} {|D_{z}F_{n}|}^{2} \nu(\dd z) \leq \sqrt{\dE \int {|D_{z}F_{n}|}^{2} \nu(\dd z) \dE \int {|D_{z}F_{n}|}^{4} \nu(\dd z)}.
\end{equation*}
By Hölder's inequality, we find that (recall $DG \in \CL^{\infty}(\dP \otimes \nu)$ by \cref{lemma:D_bounded}):
\begin{equation*}
  \dE |[u_{n}, DG]| \leq {|DG|}_{\infty} {\left(\dE \int {|DF_{n}|}^{4}\right)}^{1/4}.
\end{equation*}
The quantity
\begin{equation*}
  \dE \int {|D_{z}F_{n}|}^{2} \nu(\dd z) = \sum_{i \in [d]} q_{i}! \nu^{q_{i}}({|f_{n}^{i}|}^{2}),
\end{equation*}
is bounded by assumption.
Hence it is sufficient to show that under \cref{condition:fmt_iv},
\begin{equation*}
  \dE \int {|D_{z}F_{n}|}^{4} \nu(\dd z) \tto{} 0.
\end{equation*}
This follows from \cite[Lemma 3.2]{DoeblerPeccatiFMT} and \cite[Remark 5.2]{DoeblerVidottoZheng}.
  The proof is complete.
\end{proof}

\begin{remark}
  Let $\Sigma = (\Sigma_{ij})$ be a random matrix sufficiently integrable.
  If we assume that 
  \begin{align}
    & -\Gamma(L^{-1}F_{n}, F_{n}) \tto{\CL^{1}(\dP)} \Sigma \Sigma^{T};\\
    & \dE {(F^{i}_{n})}^{4} \tto{} 3 {\left(\dE {\left(\Sigma \Sigma^{T}\right)}_{ii}\right)}^{2},\, \forall i = 1,\dots, d.
  \end{align}
  Then, from the previous computations, $\Sigma = \sigma$ is deterministic.
  This shows that fourth-moment theorems cannot capture phenomena with asymptotic random variances.
\end{remark}
\subsection{Convergence of order $2$ Poisson-Wiener integrals to a mixture}\label{section:order_2}

We derive an analytic statement for the convergence of a sequence of random variables of the form $F = I_{2}(g)$ for some $g \in \CL_{\sigma}^{2}(\nu^{2})$.
We formulate our bound in terms of the so-called \emph{star-contractions} that naturally appears when considering products of stochastic integrals.
Here, let us define for $f$ and $g \in \CL^{2}(\nu^{2}))$:
\begin{align*}
  & f \star_{1}^{1} g(x,y) = \int f(x,z) g(y,z) \nu(\dd z) \\
  & f \star_{2}^{1} g(x) = \int f(x,y) g(x,y) \nu(\dd y).
\end{align*}
When $F = I_{2}(g)$, with $u_{0}(z) = -D_{z}L^{-1}F = I_{1}(g(z,\cdot))$, we have $F = \delta u_{0}$.
However, for every $\hat{g} \in \CL^{2}(\nu^{2})$ such that the symmetrization of $\hat{g}$ is $g$, we also have that $u(z) = I_{1}(\hat{g}(z, \cdot))$ is a solution to $\delta u = F$.
Having made this observation, we can thus specify our \cref{theorem:convergence_stable_quali,theorem:convergence_stable_quali_P} to the particular case where $F$ is an Poisson-Wiener stochastic integral of order $2$.
\begin{theorem}\label{theorem:order_2_gaussian}
  Consider the sequence of random variables  $\{F_{n} = I_{2}(g_{n}); n \in \dN\}$ for some $(g_{n}) \subset \CL^{2}_{\sigma}(\nu^{2})$.
  Suppose that there exists $(\hat{g}_{n}) \subset \CL^{2}(\nu^{2})$ such that, for all $n \in \dN$ the symmetrization of $\hat{g}_{n}$ is $g_{n}$.
  Assume, moreover, that:
  \begin{align*}
    & \begin{cases}
      & g_{n} \star_{1}^{1} \hat{g}_{n} \tto{\CL^{2}(\nu^{2})} g_{2,\infty},\\
      & \nu^{2}(g_{n} \hat{g}_{n}) \tto{} g_{0,\infty};
  \end{cases}\tag{KS}\label{condition:Sg}
    \\
    & g_{n} \tto{\CL^{4}(\nu^{2})} 0; \tag{KR$_{4}$} \label{condition:R4g} \\
    & g_{n} \star_{2}^{1} g_{n} \tto{\CL^{2}(\nu)} 0; \tag{KR$_{\star}$} \label{condition:Rstar} \\
    & \hat{g}_{n} \star_{1}^{1} h \tto{\CL^{2}(\nu)} 0 \tag{KW}\label{condition:Wg}.
  \end{align*}
  Assume that $S^{2} = I_{2}(g_{2,\infty}) + g_{0,\infty} \geq 0$.
  Then $F_{n} \tto{stably} \lN(0,S^{2})$.
\end{theorem}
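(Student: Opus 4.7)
The plan is to apply~\cref{theorem:convergence_stable_quali} to the explicit divergence representation $F_n = \delta u_n$ with $u_n(z) := I_1(\hat g_n(z, \cdot))$, and to verify its three hypotheses~\cref{condition:Snu},~\cref{condition:Wnu}, and~\cref{condition:R3}. To obtain $\delta u_n = F_n$, I would split $\hat g_n$ into its symmetric part $g_n$ and its antisymmetric remainder: the standard identity $\delta I_1(\phi(z, \cdot)) = I_2(\phi_\sigma)$ kills the antisymmetric part (whose full symmetrization vanishes) and returns $I_2(g_n) = F_n$ on the symmetric part. The integrability of $u_n$ in $\dom\delta$ is immediate from the Skorokhod isometry~\cref{equation:Skorokhod_isometry}, since $\hat g_n \in \CL^2(\nu^2)$.

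The two easier conditions are~\cref{condition:Wnu} and~\cref{condition:R3}. For $h \in \CL^2(\nu)$, a Fubini computation yields $[u_n, h]_\nu = I_1(\hat g_n \star_1^1 h)$, and the Itô isometry together with~\cref{condition:Wg} delivers~\cref{condition:Wnu}. For~\cref{condition:R3}, writing $D_z F_n = 2 I_1(g_n(z, \cdot))$ and applying Cauchy--Schwarz in $\CL^2(\dP \otimes \nu)$ reduces the task to bounding $\dE \int |D_z F_n|^4 \nu(\dd z)$; the explicit first-chaos fourth-moment identity $\dE I_1(h)^4 = 3 \|h\|_{\CL^2(\nu)}^4 + \|h\|_{\CL^4(\nu)}^4$ expresses this integral as a multiple of $\|g_n \star_2^1 g_n\|_{\CL^2(\nu)}^2 + \|g_n\|_{\CL^4(\nu^2)}^4$, which tends to zero by~\cref{condition:Rstar} and~\cref{condition:R4g} respectively.

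The substantive work lies in~\cref{condition:Snu}. Using the product formula $I_1(f) I_1(g) = I_2(f \otimes g) + I_1(fg) + \langle f, g\rangle_{\CL^2(\nu)}$ inside the integral against $\nu(\dd z)$, I would obtain the decomposition
\begin{equation}
  [u_n, DF_n]_\nu = 2 I_2(\hat g_n \star_1^1 g_n) + 2 I_1(\hat g_n \star_2^1 g_n) + 2 \nu^2(\hat g_n g_n).
\end{equation}
Under~\cref{condition:Sg}, the second-chaos term converges in $\CL^2(\dP)$ to the target $I_2(g_{2,\infty})$ (up to normalization, using that $I_2$ sees only the symmetrization of its kernel and that $\hat g_n \star_1^1 g_n$ and $g_n \star_1^1 \hat g_n$ share the same symmetrization), while the deterministic third term tends to $g_{0,\infty}$, so together they reconstruct $S S^\top$. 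The main obstacle is the vanishing of the middle $I_1$-term, i.e., $\|\hat g_n \star_2^1 g_n\|_{\CL^2(\nu)} \to 0$; a double Cauchy--Schwarz yields
\begin{equation}
  \|\hat g_n \star_2^1 g_n\|_{\CL^2(\nu)}^2 \leq \|\hat g_n \star_2^1 \hat g_n\|_{\CL^2(\nu)} \cdot \|g_n \star_2^1 g_n\|_{\CL^2(\nu)},
\end{equation}
whose second factor vanishes by~\cref{condition:Rstar}; the first factor is controlled uniformly from an $\CL^4$-bound on $\hat g_n$, which for the canonical choice $\hat g_n = g_n$ is automatic via~\cref{condition:R4g} and otherwise should be read as an implicit admissibility requirement on the non-symmetric representation. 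Once this last step is settled, all hypotheses of~\cref{theorem:convergence_stable_quali} are met and the stable convergence $F_n \tto{stably} \lN(0, S^2)$ follows.
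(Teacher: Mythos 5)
Your proposal follows the paper's proof essentially step for step: the same choice $u_{n}(z) = I_{1}(\hat g_{n}(z,\cdot))$, the same reduction to \cref{theorem:convergence_stable_quali} via \cref{condition:Snu,condition:Wnu,condition:R3}, the same product-formula decomposition of ${[u_{n}, DF_{n}]}_{\nu}$ into a second-chaos term, a first-chaos term and a constant, the same Cauchy--Schwarz contraction bound $\|\hat g_{n} \star_{2}^{1} g_{n}\|^{2}_{\CL^{2}(\nu)} \leq \|\hat g_{n}\star_{2}^{1}\hat g_{n}\|_{\CL^{2}(\nu)}\,\|g_{n}\star_{2}^{1} g_{n}\|_{\CL^{2}(\nu)}$ for the middle term (the paper cites \cite[Lemma 2.4 (vi)]{DoeblerPeccatiUStats} for it), and the same first-chaos fourth-moment identity to deduce \cref{condition:R4} from \cref{condition:R4g,condition:Rstar}. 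So the verdict is: correct, and the same route.

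Two remarks, both to your credit. First, your factors of $2$ are the right ones: since $D_{z}F_{n} = 2I_{1}(g_{n}(z,\cdot))$, one indeed has ${[u_{n},DF_{n}]}_{\nu} = 2I_{2}(g_{n}\star_{1}^{1}\hat g_{n}) + 2I_{1}(\hat g_{n}\star_{2}^{1} g_{n}) + 2\nu^{2}(g_{n}\hat g_{n})$, whereas the paper's proof computes $\int I_{1}(g(z,\cdot))I_{1}(\hat g(z,\cdot))\nu(\dd z)$ and silently drops the $2$. This is not cosmetic: by duality, $\dE\, {[u_{n},DF_{n}]}_{\nu} = \dE F_{n}^{2} = 2\nu^{2}(g_{n}^{2}) \to 2g_{0,\infty}$, so under \cref{condition:Sg} the bracket converges to $2\bigl(I_{2}(g_{2,\infty})+g_{0,\infty}\bigr)$, and the argument actually yields $F_{n} \tto{stably} \lN(0,2S^{2})$ with the statement's normalization of $S^{2}$. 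Your ``up to normalization'' hedge therefore conceals a genuine factor-$2$ mismatch between the stated target and the conditions \cref{condition:Sg} (equivalently, \cref{condition:Sg} should carry a factor $2$, or $S^{2}$ should be defined with one); you should flag this explicitly rather than absorb it. Second, you are right that the vanishing of the middle term needs $\|\hat g_{n}\star_{2}^{1}\hat g_{n}\|_{\CL^{2}(\nu)}$ bounded, and that \cref{condition:R4} $\Rightarrow$ \cref{condition:R3} needs $(u_{n})$, i.e.\ $\|\hat g_{n}\|_{\CL^{2}(\nu^{2})}$, bounded; the paper requires exactly the same unstated admissibility of the non-symmetric kernels, so this is a shared implicit hypothesis, not a defect of your argument relative to the paper's. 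One caveat on how you phrase it: for the canonical choice $\hat g_{n} = g_{n}$ the middle term dies directly by \cref{condition:Rstar} (no Cauchy--Schwarz needed), and in general an $\CL^{4}$ bound on $\hat g_{n}$ does \emph{not} control $\|\hat g_{n}\star_{2}^{1}\hat g_{n}\|_{\CL^{2}(\nu)}$ when $\nu$ is infinite, so the admissibility requirement should be stated as boundedness of that contraction itself rather than as an $\CL^{4}$ condition.
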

\begin{remark}
  Following \cref{theorem:stable_convergence_wasserstein}, it is of course possible to write conditions for the convergence in the Wasserstein distance $d_{1}$ in terms of the norms of the kernels.
  However, this task seems tedious and not particularly useful in this abstract setting.
\end{remark}
\begin{theorem}\label{theorem:order_2_poisson}
  Consider the sequence of random variables  $\{F_{n} = I_{2}(g_{n}); n \in \dN\}$ for some $(g_{n}) \subset \CL^{2}_{\sigma}(\nu^{2})$.
  Suppose that there exists $(\hat{g}_{n}) \subset \CL^{2}(\nu^{2})$ such that, for all $n \in \dN$ the symmetrization of $\hat{g}_{n}$ is $g_{n}$.
  Assume that \cref{condition:Sg,condition:Wg,condition:Rstar} hold and that
  \begin{equation*}
    \tag{KP$_{4}$}\label{condition:P4g}
      \nu^{2}\left(g_{n}^{2}{\left(g_{n}-\frac{1}{2}\right)}^{2}\right) \tto{} 0.
  \end{equation*}
  Assume that $M = I_{2}(g_{2,\infty}) + g_{0,\infty} \geq 0$.
  Then $F_{n} \tto{stably} \lP\lo(M)$.
\end{theorem}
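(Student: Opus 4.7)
The plan is to apply \cref{theorem:convergence_stable_quali_P} with the natural Kabanov integrand
\begin{equation}
  u_n(z) := I_1(\hat g_n(z, \cdot)).
\end{equation}
Since the symmetrization of $\hat g_n$ is $g_n$, standard properties of the divergence on first-chaos integrands give $\delta u_n = I_2((\hat g_n)_\sigma) = I_2(g_n) = F_n$, so it remains to check the three structural conditions \cref{condition:Wnu}, \cref{condition:M}, and \cref{condition:P3} for this choice.

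For \cref{condition:Wnu}, given $h \in \CL^2(\nu)$, I would apply Fubini and the Itô--Poisson isometry to obtain
\begin{equation}
  {[u_n, h]}_\nu \;=\; I_1(\hat g_n \star_1^1 h), \qquad \dE |{[u_n, h]}_\nu|^2 \;=\; \|\hat g_n \star_1^1 h\|_{\CL^2(\nu)}^2,
\end{equation}
which vanishes by \cref{condition:Wg}. For \cref{condition:M}, I would use $D_z F_n = 2 I_1(g_n(z, \cdot))$ together with the product formula for two first-chaos Poisson integrals (the case $p = q = 1$ of \cref{equation:product_formula}) inside the $z$-integral. After a Fubini step, this decomposes ${[u_n, DF_n]}_\nu$ into a second-chaos piece of the form $I_2(\hat g_n \star_1^1 g_n)$, a first-chaos piece of the form $I_1(\hat g_n \star_2^1 g_n)$, and a deterministic piece $\nu^2(\hat g_n g_n)$ (each with explicit combinatorial weights). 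The second-chaos and deterministic pieces converge to the two components of $M$ by \cref{condition:Sg} (the first in $\CL^2(\dP)$ via the isometry, the second numerically); the middle first-chaos piece is controlled by the pointwise Cauchy--Schwarz inequality
\begin{equation}
  \|\hat g_n \star_2^1 g_n\|_{\CL^2(\nu)}^2 \;\leq\; \|\hat g_n \star_2^1 \hat g_n\|_{\CL^2(\nu)} \, \|g_n \star_2^1 g_n\|_{\CL^2(\nu)},
\end{equation}
which vanishes by \cref{condition:Rstar} (applied both to $g_n$ and, after symmetry considerations, to $\hat g_n$, whose $\CL^2$-norm equals that of $g_n$).

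For \cref{condition:P3}, a double Cauchy--Schwarz (first in $z$, then in $\omega$) yields
\begin{equation}
  \dE \int |u_n(z)| \, |D_z F_n (D_z F_n - 1)| \, \nu(\dd z) \;\leq\; \|u_n\|_{\CL^2(\dP \otimes \nu)} \, {\left( \dE \int (D_z F_n)^2 (D_z F_n - 1)^2 \nu(\dd z) \right)}^{1/2}.
\end{equation}
The factor $\|u_n\|_{\CL^2(\dP \otimes \nu)} = \|g_n\|_{\CL^2(\nu^2)}$ is bounded by \cref{condition:Sg}. The rewriting $D_z F_n - 1 = 2(I_1(g_n(z, \cdot)) - 1/2)$ explains the $1/2$ appearing in \cref{condition:P4g}; expanding the squared integrand with the first-chaos product formula and taking expectations via the Mecke formula \cref{equation:Mecke} expresses the second factor as $\nu^2(g_n^2 (g_n - 1/2)^2)$ plus lower-order contractions of $g_n$, the leading piece vanishing by \cref{condition:P4g} and the auxiliary pieces by \cref{condition:Rstar} and \cref{condition:Sg}.

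The \textbf{main obstacle} is the verification of \cref{condition:P3}: whereas the first two conditions reduce cleanly to the analytic hypotheses via the product formula and the isometry, the Poisson-specific factor $D_z F_n (D_z F_n - 1)$ forces a careful bookkeeping of all the chaotic and constant pieces produced when the first-chaos product formula is iterated, and it is precisely here that \cref{condition:P4g} (together with \cref{condition:Rstar}) enters, with the peculiar $1/2$ absorbing the factor of $2$ that the Malliavin derivative brings on a second-chaos integral.
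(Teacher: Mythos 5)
Your proposal follows the paper's own proof essentially line for line: the same integrand $u_{n} = I_{1}(\hat{g}_{n}(z,\cdot))$ with $\delta u_{n} = F_{n}$, the same product-formula decomposition of ${[u_{n}, DF_{n}]}_{\nu}$ into a second-chaos term $I_{2}(g_{n} \star_{1}^{1} \hat{g}_{n})$, a first-chaos term $I_{1}(g_{n} \star_{2}^{1} \hat{g}_{n})$ killed by the Cauchy--Schwarz bound on contractions, and a constant term $\nu^{2}(g_{n}\hat{g}_{n})$; the same fourth-moment identity
\begin{equation}
  \frac{1}{16}\, \dE \int {\left|D_{z}F_{n}\left(D_{z}F_{n}-1\right)\right|}^{2} \nu(\dd z) = \nu^{2}\left(g_{n}^{2}{\left(g_{n}-\tfrac{1}{2}\right)}^{2}\right) + 3\, \nu\left({\left(g_{n}\star_{2}^{1}g_{n}\right)}^{2}\right)
\end{equation}
behind \cref{condition:P3}; the same isometry argument reducing \cref{condition:Wnu} to \cref{condition:Wg}; and the same final appeal to \cref{theorem:convergence_stable_quali_P}.

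There is, however, one point where your justification is wrong rather than merely implicit: the handling of the $\hat{g}_{n}$-norms. Symmetrization is an $\CL^{2}$-contraction, not an isometry, so in general ${|g_{n}|}_{\CL^{2}(\nu^{2})} \leq {|\hat{g}_{n}|}_{\CL^{2}(\nu^{2})}$ with possibly strict inequality (adding any antisymmetric function to $\hat{g}_{n}$ leaves $g_{n}$ unchanged, and nothing in the hypotheses forbids this). Hence ${|u_{n}|}_{\CL^{2}(\dP\otimes\nu)} = {|\hat{g}_{n}|}_{\CL^{2}(\nu^{2})}$, \emph{not} ${|g_{n}|}_{\CL^{2}(\nu^{2})}$, and its boundedness does not follow from \cref{condition:Sg}; likewise \cref{condition:Rstar} is a hypothesis on $g_{n}\star_{2}^{1}g_{n}$ only and does not transfer to $\hat{g}_{n}\star_{2}^{1}\hat{g}_{n} = \int \hat{g}_{n}(y,\cdot)^{2}\nu(\dd y)$ by any symmetry consideration. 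What your argument actually requires is (a) boundedness of ${|\hat{g}_{n}|}_{\CL^{2}(\nu^{2})}$, so that the Cauchy--Schwarz reduction of \cref{condition:P3} to \cref{condition:P4} goes through, and (b) boundedness of $\nu\bigl({|\hat{g}_{n}\star_{2}^{1}\hat{g}_{n}|}^{2}\bigr)$, so that the Cauchy--Schwarz bound makes the first-chaos term vanish. The paper carries exactly the same requirements, but as tacit provisos (\enquote{provided $(u_{n})$ is bounded in $\CL^{2}$, \cref{condition:P4} implies \cref{condition:P3}}) rather than as false identities; both hold automatically for the canonical choice $\hat{g}_{n} = g_{n}$, i.e.\ $u_{n} = -DL^{-1}F_{n}$, but for a genuinely asymmetric $\hat{g}_{n}$ they must be assumed. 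Replace your two norm-equality claims by this explicit assumption and your proof coincides with the paper's.
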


\begin{proof}[Proof of {\cref{theorem:order_2_gaussian,theorem:order_2_poisson}}]
We prove the two theorems at once.
We simply apply \cref{theorem:convergence_stable_quali,theorem:convergence_stable_quali_P} to our data.
For simplicity, we drop the dependence in $n$.
Let $u = I_{1}(\hat{g})$.
Let us compute ${[DF, u]}_{\nu} = \nu(DF u)$ in that case.
  By the product formula \cite[Proposition 5]{LastAnaSto}, we have that
\begin{equation*}
  \begin{split} 
    {[DF, u]}_{\nu} &= \nu(u DF) = \int I_{1}(g(z,\cdot)) I_{1}(\hat{g}(z,\cdot)) \nu(\dd z) \\
                    &= \int I_{2}(g(z,\cdot) \otimes \hat{g}(z,\cdot)) + I_{1}(g(z,\cdot)\hat{g}(z,\cdot)) + \nu(g(z,\cdot)\hat{g}(z,\cdot)).
  \end{split}
\end{equation*}
By linearity of $I_{1}$ and $I_{2}$, we thus find 
\begin{equation*}
  \nu(uDF) = I_{2}(g \star_{1}^{1} \hat{g}) + I_{1}(g \star_{2}^{1} \hat{g}) + \nu^{2}(g\hat{g}).
\end{equation*}
By \cite[Lemma 2.4 (vi)]{DoeblerPeccatiUStats} (which according to the proof holds for any $\sigma$-finite measure $\nu$), we have that $\nu({|g \star_{2}^{1} \hat{g}|}^{2}) \leq {\nu({|g \star_{2}^{1} g|}^{2})}^{\frac{1}{2}} {\nu({|\hat{g} \star_{2}^{1} \hat{g}|}^{2})}^{\frac{1}{2}}$.
Hence, we see that \cref{condition:Sg,condition:Rstar} implies, by the continuity of $I_{2} \colon \CL^{2}(\nu^{2}) \to \CL^{2}(\dP)$  either \cref{condition:Snu} or \cref{condition:M} with $S^{2}$ or $M$ as given in the statement.
On the one hand, we have that
\begin{equation*}
  \begin{split}
    \frac{1}{16} \dE \int {|D_{z}F|}^{4} \nu(\dd z) &= \int \dE {I_{1}(g(z,\cdot))}^{4} \nu(\dd z) \\
                                       &= 3 \int {\left(\int g(y,z)^{2} \nu(\dd y)\right)}^{2} \nu(\dd z) + \int \int {g(y,z)}^{4} \nu(\dd y) \nu(\dd z) \\
                                                    &= 3 \nu\left({(g \star_{2}^{1} g)}^{2}\right) + \nu^{2}(g^{4}).
  \end{split}
\end{equation*}
To obtain the second equality, we use that $I_{1}(g(\cdot, z))$ has the same law as $\lP\lo(\nu(g(\cdot, z)))$; and we obtain the last equality by some easy algebraic manipulations.
So that \cref{condition:R4g,condition:Rstar} readily implies \cref{condition:R4}.
On the other hand:
\begin{equation*}
  \frac{1}{16} \dE \int {|D_{z}F(D_{z}F-1)|}^{2} \nu(\dd z) = \nu^{2}\left(g^{2}{\left(g-\frac{1}{2}\right)}^{2}\right) + 3 \nu\left({(g \star_{2}^{1} g)}^{2}\right).
\end{equation*}
We thus see that \cref{condition:P4g,condition:Rstar} implies \cref{condition:P4}.
Finally, we find that
\begin{equation*}
  \nu(u h) = I_{1}(\hat{g} \star_{1}^{1} h).
\end{equation*}
Consequently, using the continuity of $I_{1}$, we find that \cref{condition:Wg} implies \cref{condition:Wnu}.
\end{proof}

\section{Convergence of a quadratic functional of a Poisson process on the line}\label{section:application}

In this section, we apply our abstract result to show a limit theorem for a particular quadratic functional.
Let us recall one of the main applications of \cite{NourdinNualartLimitsSkorokhod,NourdinNualartPeccatiStableLimits}, refining a result of \cite{PeccatiYor}.
\begin{theorem}[{\cite[Example 4.2]{NourdinNualartLimitsSkorokhod}} and {\cite[Theorem 3.7]{NourdinNualartPeccatiStableLimits}}]\label{theorem:NNP}
  Let $W$ be a standard Brownian motion on $[0,1]$ and let
  \begin{equation*}
    Q_{n} = \frac{n^{\frac{3}{2}}}{\sqrt{2}} \int_{0}^{1} t^{n-1} (W_{1}^{2} - W_{t}^{2}) \dd t, \quad n \in \dN.
  \end{equation*}
  Then,
  \begin{equation*}
    Q_{n} \tto{stably} \lN\left(0, W_{1}^{2}\right).
  \end{equation*}
  Moreover, there exists $c > 0$ such that, for all $n \in \dN$:
  \begin{equation*}
    d_{1}\left(Q_{n}, \lN\left(0, W_{1}^{2}\right)\right) \leq c n^{-\frac{1}{6}}.
  \end{equation*}
\end{theorem}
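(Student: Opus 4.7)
The plan is to recast $Q_{n}$ as a first-order Skorokhod integral and then apply the Gaussian Malliavin-Stein stable bound of \cite{NourdinNualartLimitsSkorokhod, NourdinNualartPeccatiStableLimits}, which is the Wiener-space ancestor of \cref{theorem:stable_convergence_wasserstein}. Starting from Itô's formula $W_{1}^{2} - W_{t}^{2} = 2\int_{t}^{1} W_{s}\dd W_{s} + (1-t)$ and a stochastic Fubini, one obtains
\begin{equation}
Q_{n} = \sqrt{2n}\,\delta u_{n} + \varepsilon_{n}, \qquad u_{n}(s) = \sqrt{2n}\, s^{n} W_{s},
\end{equation}
where $\varepsilon_{n} = O(n^{-1/2})$ is a deterministic shift. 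Setting $F_{n} := \delta u_{n}$, it suffices to establish the stable convergence and the bound for $F_{n}$.

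Next, I would compute the Stein product $\langle u_{n}, DF_{n} \rangle_{\CL^{2}([0,1])}$ via the Heisenberg relation $D_{t}\delta u_{n} = u_{n}(t) + \delta(D_{t}u_{n})$. Since $D_{t}u_{n}(s) = \sqrt{2n}\, s^{n} \mathbf{1}_{t \le s}$, this yields $D_{t}F_{n} = \sqrt{2n}\, t^{n} W_{t} + \sqrt{2n} \int_{t}^{1} s^{n} \dd W_{s}$, whence
\begin{equation}
\langle u_{n}, DF_{n} \rangle \;=\; 2n \int_{0}^{1} t^{2n} W_{t}^{2} \dd t \;+\; 2n \int_{0}^{1} t^{n} W_{t} \int_{t}^{1} s^{n} \dd W_{s}\dd t \;=:\; A_{n} + B_{n}.
\end{equation}
Since $(2n+1)t^{2n}\dd t$ is a probability density concentrating at $t=1$ and $\dE|W_{t}^{2} - W_{1}^{2}| \le C\sqrt{1-t}$, a direct Laplace-type estimate gives $A_{n} \to W_{1}^{2}$ in $\CL^{1}(\dP)$ at rate $n^{-1/2}$. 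For $B_{n}$, a stochastic Fubini rewrites $B_{n} = 2n \int_{0}^{1} s^{n} \bigl(\int_{0}^{s} t^{n} W_{t} \dd t\bigr)\dd W_{s}$, and Itô's isometry together with $\dE[W_{t}W_{u}] = t \wedge u$ yields $\dE B_{n}^{2} = O(n^{-1})$. Hence $\langle u_{n}, DF_{n} \rangle \to W_{1}^{2}$ in $\CL^{1}(\dP)$ with rate $n^{-1/2}$.

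The remaining conditions---asymptotic independence $\dE \langle u_{n}, h \rangle^{2} \to 0$ for every $h \in \CL^{2}([0,1])$ (reduced by density to $h = \mathbf{1}_{[0,b]}$ and then to a direct covariance computation), together with the auxiliary bounds controlling $D S_{n}$, where $S_{n}^{2} = \langle u_{n}, DF_{n} \rangle$---are routine on the Wiener space. Stable convergence to $\lN(0, W_{1}^{2})$ then follows from the Gaussian analogue of \cref{theorem:convergence_stable}. For the quantitative statement, I would feed the $n^{-1/2}$ input rate into the Wasserstein regularization lemma (the Wiener ancestor of \cref{lemma:regularization_MK}) and optimize the smoothing parameter, which gives the exponent $n^{-1/6}$.

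The main obstacle is the control of $D S_{n}$: because $\sqrt{\cdot}$ is not Malliavin-smooth at the origin, one must regularize by replacing $S_{n}$ with $\sqrt{S_{n}^{2} + \rho}$ and letting $\rho \downarrow 0$, or use a stopping argument on $\{S_{n}^{2} \ge \rho\}$. Balancing the resulting regularization error against the Stein-matrix convergence rate is precisely what forces the exponent $1/6$ rather than a faster one.
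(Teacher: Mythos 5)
First, a point of order: the paper never proves \cref{theorem:NNP}. It is quoted from the cited works (\cite{NourdinNualartLimitsSkorokhod}, Example~4.2, and \cite{NourdinNualartPeccatiStableLimits}, Theorem~3.7) purely as motivation for its Poisson counterpart, \cref{theorem:quadratic}. So your proposal can only be compared with the proofs in those references and with the paper's treatment of the analogue. Your route is essentially that of the cited references: represent $Q_{n}$, up to a deterministic $O(n^{-1/2})$ shift, as a Skorokhod integral $\delta u_{n}$ with $u_{n}(s) = \sqrt{2n}\, s^{n} W_{s}$; compute $\langle u_{n}, DF_{n}\rangle = A_{n} + B_{n} \to W_{1}^{2}$ in $\CL^{1}(\dP)$ at rate $n^{-1/2}$; check the asymptotic independence $\langle u_{n}, h \rangle \to 0$; and convert the resulting smooth-test-function estimate into a $d_{1}$ bound by Gaussian smoothing, which loses a cube root and yields $n^{-1/6}$. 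Those computations are correct, modulo one normalization slip: with $u_{n}(s) = \sqrt{2n}\, s^{n}W_{s}$ one has $Q_{n} = \delta u_{n} + \varepsilon_{n}$, not $Q_{n} = \sqrt{2n}\,\delta u_{n} + \varepsilon_{n}$ (your subsequent formulas for $DF_{n}$ and $A_{n}+B_{n}$ implicitly use the correct version). It is instructive to contrast this with what the paper does for the Poisson analogue: there the direct verification is deliberately avoided---the paper remarks that \cref{condition:R4} fails for its $u_{n}$ and that it does not know whether \cref{condition:R3} holds---and the functional is instead reduced to a two-dimensional vector of stochastic integrals handled by the stable fourth-moment theorem, \cref{proposition:stable_fourth_moment}. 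On the Wiener space no such obstruction exists (there is no analogue of \cref{condition:R3}, the space being diffusive), so your direct route is the right one there.

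Second, your closing paragraph identifies a phantom obstacle and misattributes the exponent. On the Wiener space the chain rule is available, so the quantitative stable bounds of \cite{NourdinNualartPeccatiStableLimits} are phrased in terms of $\langle u_{n}, D(S^{2})\rangle$ rather than $DS$---precisely the simplification $DS^{2} = 2S\,DS$ that the paper's \cref{section:comparison} points out is \emph{unavailable} on the Poisson space. Moreover, one compares $F_{n}$ directly with the limiting mixture $\lN(0, W_{1}^{2})$, i.e.\ one takes $S^{2} = W_{1}^{2}$, for which $D_{t}(S^{2}) = 2W_{1}$ is perfectly smooth and $\dE\left|\langle u_{n}, D(W_{1}^{2})\rangle\right| = O(n^{-1/2})$; at no point is a Malliavin derivative of $S_{n} = \sqrt{\langle u_{n}, DF_{n}\rangle}$ required, and no regularization $\sqrt{S_{n}^{2}+\rho}$ or stopping argument is needed. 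The exponent $1/6$ has nothing to do with such a regularization: it is the cube-root loss in the test-function smoothing step (the Wiener ancestor of \cref{lemma:regularization_MK}), namely the term $\Delta_{1}^{2/3}\Delta_{2}^{1/3}$ with $\Delta_{2} = O(n^{-1/2})$. With these two corrections, your outline is a faithful reconstruction of the cited proofs.
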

Let $\eta$ be a Poisson point process on $\dR_{+}$ with intensity the Lebesgue measure; and $\hat{N}_{t} = \eta([0,t]) - t$, for $t \in \dR$.
The process $\hat{N}$ is a martingale called a \emph{compensated Poisson process on the line}.
Recall that from \fcite{DynkinMandelbaum}, we have that
\begin{equation*}
  \left\{n^{-\frac{1}{2}} \hat{N}_{nt};\; t \geq 0 \right\} \tto{} W,
\end{equation*}
where the convergence holds in the sense of finite-dimensional distributions and in a stronger sense that we do not detail here.
Having made this remark the following thermo-dynamical limit appears as a natural generalization of \cref{theorem:NNP}.
\begin{theorem}\label{theorem:quadratic}
  Let 
  \begin{equation*}
    Q_{n} = \frac{n^{\frac{3}{2}}}{\sqrt{2}} \int_{0}^{1} t^{n-1} \left({\left(n^{-\frac{1}{2}}\hat{N}_{n}\right)}^{2} - {\left(n^{-\frac{1}{2}}\hat{N}_{nt}\right)}^{2}\right) \dd t, \quad n \in \dN.
  \end{equation*}
  Then,
  \begin{equation*}
    Q_{n} \tto{stably} \lN\left(0,W_{1}^{2}\right).
  \end{equation*}
\end{theorem}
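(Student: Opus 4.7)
The plan is to recast $Q_n$ as a second-order Itô–Poisson integral and then invoke \cref{theorem:convergence_stable}. Setting $X_t := n^{-1/2}\hat{N}_{nt}$ and applying Itô's formula to the semimartingale $X^2$ on the interval $[t,1]$, one obtains
\[
X_1^2 - X_t^2 = 2 \int_t^1 X_{s-}\,dX_s + (1-t) + n^{-1}(\hat{N}_n - \hat{N}_{nt}).
\]
Multiplying by $t^{n-1}$, integrating over $t \in (0,1)$ and using Fubini, we obtain the decomposition $Q_n = F_n + R_n$, where $F_n := \sqrt{2n}\int_0^1 s^n X_{s-}\,dX_s$ is the main contribution and $R_n$ collects an $O(n^{-1/2})$ deterministic term together with a first-order Poisson integral whose $\CL^2(\dP)$-norm is $O(n^{-1/2})$ (computed via explicit beta-integrals). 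A direct identification yields $F_n = I_2(g_n) = \delta u_n$ with the (symmetric) kernel and (non-symmetric) integrand
\[
g_n(x,y) = \frac{1}{\sqrt{2n}}\left(\frac{x \vee y}{n}\right)^n 1_{[0,n]^2}(x,y), \qquad u_n(z) = \sqrt{\tfrac{2}{n}}\int_z^n (r/n)^n\,d\hat{N}_r \, 1_{[0,n]}(z).
\]

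Next, I would apply \cref{theorem:convergence_stable} with the $\gW$-measurable sequence $S_n := n^{-1/2}\hat{N}_n$ and limit $S := W_1$. The stable convergence $S_n \tto{stably} W_1$ follows from the Dynkin–Mandelbaum functional invariance principle combined with \cref{proposition:density_G}: it is enough to test against exponentials $\e^{-\eta(\psi)}$ with $\psi$ compactly supported, which depend only on the restriction of $\eta$ to a bounded interval $[0,T]$, and the joint convergence of $(n^{-1/2}\hat{N}_{n\cdot}|_{[0,T]}, S_n)$ to $(W|_{[0,T]}, W_1)$ is a standard Donsker-type statement. The crux of the argument is to verify
\[
\widetilde{[u_n, DF_n]}_\nu = S_n^2 + \epsilon_n, \qquad \epsilon_n \tto{\CL^1(\dP)} 0.
\]
Using the explicit decomposition $DF_n(z) = u_n(z) + \sqrt{2/n}(z/n)^n \hat{N}_z$ on $[0,n]$ and expanding $[u_n, DF_n]_\nu$ into a constant piece plus $I_1$- and $I_2$-chaos contributions via the product formula for Poisson–Wiener integrals, one invokes Laplace-type asymptotics on kernels of the form $(r/n)^n 1_{[0,n]}(r)$, which concentrate in a window of size $O(1)$ near $r = n$ (under the change of variables $r = n - s$, $(r/n)^n \to \e^{-s}$).

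Finally, the remaining hypotheses \cref{condition:R3,condition:Wnu,condition:RS,condition:Rh,condition:S3,condition:Sh} of \cref{theorem:convergence_stable} reduce to direct $\CL^p(\nu^k)$-estimates on beta-type integrals: \cref{condition:R3} follows from $\dE\int{|D_z F_n|}^4 \nu(\dd z) = O(n^{-1})$ combined with Cauchy–Schwarz, while $D_z S_n = n^{-1/2} 1_{[0,n]}(z)$ is deterministic and bounded, so \cref{condition:RS,condition:S3,condition:Sh} are immediate consequences of the uniform boundedness of $\|u_n\|_{\CL^2(\dP\otimes\nu)}$, and \cref{condition:Wnu} reduces to $\int u_n(z) h(z) \nu(\dd z) \tto{\CL^2(\dP)} 0$ for all $h \in \CL^2(\nu)$, which is an $I_1$-computation bounded by a contraction norm. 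The main obstacle will be the matching $\widetilde{[u_n, DF_n]}_\nu = S_n^2 + o_{\CL^1(\dP)}(1)$: the kernels $g_n$ do \emph{not} admit an $\CL^2(\nu^2)$-limit because their mass escapes to the boundary of $[0,n]^2$, so the identification must proceed by careful bookkeeping of each individual chaos contribution at the appropriate scale, rather than by passing to a direct $\CL^2$-limit at the level of kernels.
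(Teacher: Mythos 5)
Your reduction of $Q_n$ is sound and matches the paper's: Itô's formula, the $O(n^{-1/2})$ remainders, and the identification $F_n = I_2(g_n) = \delta u_n$ with the kernel you wrote are all correct. The fatal problems lie in the verification of the hypotheses of \cref{theorem:convergence_stable}. First, your claim that $\dE\int |D_zF_n|^4\,\nu(\dd z) = O(n^{-1})$ is false. Using your own formula $D_zF_n = u_n(z) + \sqrt{2/n}\,(z/n)^n\hat N_z$: on the window $n-z = O(1)$ one has $(z/n)^n \asymp \e^{-(n-z)}$ of order one and $\dE \hat N_z^4 \asymp 3n^2$, so
\begin{equation}
  \int_0^n \dE\Bigl(\sqrt{2/n}\,(z/n)^n \hat N_z\Bigr)^4 \dd z = \frac{12n}{4n+3} + \frac{4}{4n+2} \tto{} 3,
\end{equation}
and the full integral $\dE\int|D_zF_n|^4\nu(\dd z)$ likewise tends to a strictly positive constant. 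This is exactly the point of the remark closing \cref{section:application}: \cref{condition:R4} fails for this functional, and the paper states that it is not even known whether \cref{condition:R3} holds. Hence your Cauchy--Schwarz route to \cref{condition:R3} collapses, and with it the applicability of \cref{theorem:convergence_stable} along your lines.

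Second, and more structurally, your crux identification $\widetilde{[u_n,DF_n]}_\nu = S_n^2 + \epsilon_n$ with $S_n = n^{-1/2}\hat N_n$ is not merely hard to prove; it is false. Indeed ${[u_n,DF_n]}_\nu = \int_0^n u_n^2\,\dd\nu + \sqrt{2/n}\int_0^n (z/n)^n \hat N_z u_n(z)\,\dd z$; the cross term is $O_{\CL^1(\dP)}(n^{-1/2})$, while, since the kernel $(r/n)^n$ concentrates near $r=n$, $u_n(z)$ is essentially $\sqrt{2/n}\,V_n$ for every $z$ with $n-z\gg 1$, where $V_n := n^{-n}\int_0^n r^n \dd\hat N_r$; a Cauchy--Schwarz estimate gives $\int_0^n u_n^2\,\dd\nu = 2V_n^2 + O_{\CL^1(\dP)}(n^{-1})$. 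But the paper's own application of \cref{proposition:stable_fourth_moment} shows that $(n^{-1/2}\hat N_n, V_n)$ converges stably to a pair of \emph{independent} Gaussians $(W_1, Z)$ with variances $(1,\tfrac12)$; consequently $\widetilde{[u_n,DF_n]}_\nu - S_n^2$ converges in law to $2Z^2 - W_1^2$, a non-degenerate limit, and so cannot tend to $0$ in $\CL^1(\dP)$. The Stein kernel tracks the square of the \emph{other} factor $V_n$, not of $n^{-1/2}\hat N_n$; the two agree in law (since $2Z^2$ and $W_1^2$ are both $\chi^2_1$) but not as random variables, which is what \cref{condition:epsilon} demands. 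For the same reason your claim that \cref{condition:RS} is immediate is wrong: ${[u_n,(DS_n)S_n]}_\nu = \sqrt2\,\bigl(n^{-1/2}\hat N_n\bigr)\bigl(n^{-1}\int_0^n r(r/n)^n\dd\hat N_r\bigr)$ converges in law to a non-zero Gaussian product, not to $0$. This accumulation of failures is precisely why the paper does \emph{not} use \cref{theorem:convergence_stable} here: it instead writes $F_n = (n^{-1/2}\hat N_n)V_n + \rho_n$ with $\dE\rho_n^2 = O(n^{-1})$ (Skorokhod isometry), applies the stable fourth-moment theorem to the pair of first-order integrals, and concludes by continuity of the product under stable convergence. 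Even if you replaced your $S_n$ by the correct candidate $\sqrt2\,V_n$, condition \cref{condition:R3} would remain an open obstruction, so your plan cannot be completed as stated.
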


\begin{proof}
  By Itô's formula (see, for instance \cite[Chapter I, Theorem 4.57]{JacodShiryaev}), we have that
  \begin{equation*}
    \hat{N}_{t}^{2} = 2 \int_{0}^{t} \hat{N}_{s^{-}} \dd \hat{N}_{s} + \sum_{s \leq t} {\left(\hat{N}_{s} - \hat{N}_{s^{-}}\right)}^{2}.
  \end{equation*}
  Since, a Poisson process only has jumps of size $1$, we find that
  \begin{equation*}
    \hat{N}^{2}_{t} = 2 \int_{0}^{t} \hat{N}_{s^{-}} \dd \hat{N}_{s} + N_{t}.
  \end{equation*}
  Hence, we can write
  \begin{equation*}
    Q_{n} = \sqrt{2} F_{n} + H_{n},
  \end{equation*}
  where
  \begin{align}
    & F_{n} =  n^{\frac{1}{2}} \int_{0}^{1} t^{n-1} \int_{nt}^{n} \hat{N}_{s^{-}} \dd \hat{N}_{s} \dd t; \\
    & H_{n} = {\left(\frac{n}{2}\right)}^{\frac{1}{2}}\int_{0}^{1} t^{n-1} (N_{n} - N_{nt}) \dd t.
  \end{align}
  Recalling that $N$ is a non-decreasing process and that $\dE N_{t} = t$, we find that
  \begin{equation*}
    \begin{split}
      \dE |H_{n}| &= 2^{-\frac{1}{2}} n^{\frac{3}{2}} \int_{0}^{1} t^{n-1} (1-t) \dd t \\
                  &= 2^{-\frac{1}{2}} n^{\frac{3}{2}} \left(\frac{1}{n} - \frac{1}{n+1}\right) \\
                  &= 2^{-\frac{1}{2}} \frac{n^{\frac{1}{2}}}{n+1} = O(n^{-\frac{1}{2}}).
    \end{split}
  \end{equation*}
  Consequently, in order to obtain the conclusions of the theorem for $(Q_{n})$ it suffices to obtain them for $(F_{n})$.
  By inverting the order of integration, we find:
  \begin{equation*}
    F_{n} =  n^{-\frac{1}{2}} \int_{0}^{n} \hat{N}_{s^{-}} {\left(\frac{s}{n}\right)}^{n} \dd \hat{N}_{s} = n^{-\frac{1}{2}-n} \int_{0}^{n} \hat{N}_{s^{-}} s^{n} \dd \hat{N}_{s} = \delta u_{n},
  \end{equation*}
  where
  \begin{equation*}
    u_{n}(s) = n^{-\frac{1}{2}} \hat{N}_{s} 1_{[0,n]}(s) {\left(\frac{s}{n}\right)}^{n}.
  \end{equation*}
  We have that
  \begin{equation*}
    F_{n} = n^{-\frac{1}{2}} \hat{N}_{n} n^{-n} \int_{0}^{n} s^{n} \dd \hat{N}_{s} + n^{-\frac{1}{2} - n} \int_{0}^{n} (\hat{N}_{s^{-}} - \hat{N}_{n}) s^{n} \dd \hat{N}_{s}.
  \end{equation*}
  Now observe that, by Skorokhod's isometry:
  \begin{equation*}
    \begin{split}
      \dE {\left(n^{-\frac{1}{2} - n} \int_{0}^{n} (\hat{N}_{s^{-}} - \hat{N}_{n}) s^{n} \dd \hat{N}_{s}\right)}^{2} &= n^{-2n-1} \int_{0}^{n} {(s-n)}^{2} s^{2n} \dd s \\
                                                                                                                     &= \frac{n}{(2n+1)(2n+2)} = O(n^{-1}).
    \end{split}
  \end{equation*}
  We have that
  \begin{align}
    & \dE {\left(n^{-\frac{1}{2}} \hat{N}_{n}\right)}^{2} = 1; \\
    & \dE {\left(n^{-\frac{1}{2}} \hat{N}_{n}\right)}^{4} = 3 + n^{-1}; \\
    & \dE {\left(n^{-n} \int_{0}^{n} s^{n} \dd \hat{N}_{s}\right)}^{2} = \frac{n}{2n+1}; \\
    & \dE {\left(n^{-n} \int_{0}^{n} s^{n} \dd \hat{N}_{s}\right)}^{4} = \frac{n}{4n+1} + 3 \frac{n^{2}}{{(2n+1)}^{2}}.
  \end{align}
  By our stable fourth moment theorem \cref{proposition:stable_fourth_moment}, we immediately find that:
  \begin{equation*}
  \left(n^{-\frac{1}{2}} \hat{N}_{n}, n^{-n} \int_{0}^{n} s^{n} \dd \hat{N}_{s}\right) \tto{stably} \lN\left(0, \begin{pmatrix} 1 & 0 \\ 0 & \frac{1}{2} \end{pmatrix}\right).
  \end{equation*}
  Thus proving that $\sqrt{2} F_{n} \tto{stably} \lN(0, W_{1}^{2})$, and hence that $Q_{n} \tto{stably} \lN(0, W_{1}^{2})$.
\end{proof}
\begin{remark}
  Rather than studying $\delta u_{n}$ with \cref{theorem:convergence_stable}, we simplify the problem by studying the convergence of two Itô-Wiener integrals.
  In fact, in our example, \cref{condition:R4} is not satisfied.
  With the notations of the proof, we have that $D_{s}F_{n} = n^{-n-\frac{1}{2}} \int_{0}^{n} {(s \vee t)}^{n} \dd \hat{N}_{t}$.
  An easy computation yields that
  \[
    \int_{0}^{n} \dE {(D_{s}F_{n})}^{4} \dd s \tto{} \frac{1}{4}.
  \]
  We do not know if \cref{condition:R3} holds, so we do not know if we could use \cref{theorem:convergence_stable} directly (or even invoke \cref{theorem:stable_convergence_wasserstein_sequence} to get a quantitative estimate).
\end{remark}

\section{Some open questions}
\begin{itemize}
  \item As already mentioned, we are interested in understanding which techniques we should consider to reach quantitative estimates for the convergence to a Poisson mixture.
  \item According to \cite[Remark 3.3 (b)]{NourdinNualartPeccatiStableLimits}, the results of \cite{NourdinNualartLimitsSkorokhod} can be understood as a variant of the \emph{asymptotic Knight theorem} about the convergence of Brownian martingales.
In the Poisson setting, it would be interesting to know if our results can be put in contrast with a corresponding martingale result.
  \item Very commonly, quantitative limit theorems in stochastic geometry rely on Malliavin-Stein bounds on the Poisson space (see among others \cite{ReitznerSchulteCLTUStatistics,LachiezeReyPeccatiContractions,LachiezeReyPeccatiMarkedProcess,PeccatiChenStein}).
    In particular, counting statistics of a nice class of rescaled geometric random graphs constructed from a Poisson point process exhibit a Gaussian or Poisson asymptotic behaviour depending on the regime of the rescaling.
    In view of our results, we ask whether it is possible to consider a wider class of geometric random graphs (including the previous one) whose counting statistics exhibit a convergence to a mixture.
\end{itemize}

\printbibliography

\end{document}